\newtheorem{proposition}{Proposition}
\newtheorem{theorem}[proposition]{Theorem}
\newtheorem{lemma}[proposition]{Lemma}
\theoremstyle{remark}
\newtheorem{remark}[proposition]{Remark}
\theoremstyle{definition}
\newtheorem{definition}[proposition]{Definition}
\numberwithin{equation}{section}
\numberwithin{proposition}{section}
\numberwithin{figure}{section}
\numberwithin{table}{section}
\newcommand{\N}{\mathbb{N}}
\newcommand{\Q}{\mathbb{Q}}
\newcommand{\R}{\mathbb{R}}
\newcommand{\E}{\mathbb{E}}
\renewcommand{\P}{\mathbb{P}}
\newcommand{\ep}{\varepsilon}
\renewcommand{\le}{\leqslant}
\renewcommand{\ge}{\geqslant}
\renewcommand{\subset}{\subseteq}
\newcommand{\la}{\left\langle}
\newcommand{\ra}{\right\rangle}
\newcommand{\Ll}{\left}
\newcommand{\Rr}{\right}
\renewcommand{\d}{\mathrm{d}}
\DeclareMathOperator{\tr}{tr}
\DeclareMathOperator{\supp}{supp}
\renewcommand{\bar}{\overline}
\newcommand{\td}{\widetilde}
\renewcommand{\hat}{\widehat}
\newcommand{\1}{\mathds{1}}
\newcommand{\mcl}{\mathcal}
\newcommand{\msf}{\mathsf}
\newcommand{\al}{\alpha}
\newcommand{\be}{\beta}
\newcommand{\de}{\delta}
\newcommand{\si}{\sigma}
\newcommand{\A}{{\mathcal{A}}}
\renewcommand{\H}{\mathsf{H}}
\newcommand{\M}{\mathbb{M}}
\newcommand{\dr}{\partial}
\newcommand{\n}{\mathbf{n}}
\newcommand{\upa}{\uparrow}
\newcommand{\vb}{\, \big \vert \, }
\begin{document}

\author[J.-C. Mourrat]{Jean-Christophe Mourrat}
\address[J.-C. Mourrat]{Courant Institute of Mathematical Sciences, New York University, New York NY, USA; Ecole Normale Sup\'erieure de Lyon and CNRS, Lyon, France}
\email{jean-christophe.mourrat@ens-lyon.fr}

\keywords{Spin glass, Hamilton-Jacobi equation, Parisi formula}
\subjclass[2010]{82B44, 82D30}
\date{\today}

\title[Free energy upper bound for mean-field vector spin glasses]{Free energy upper bound for mean-field 
\\
vector spin glasses}

\begin{abstract}
We consider vector spin glasses whose energy function is a Gaussian random field with covariance given in terms of the matrix of scalar products. For essentially any model in this class, we give an upper bound for the limit free energy, which is expected to be sharp. The bound is expressed in terms of an infinite-dimensional Hamilton-Jacobi equation. 
\end{abstract}

\maketitle

%
%
%
%
%
%

\section{Introduction}
\label{s.intro}

\subsection{Statement of the main result}
The goal of this paper is to prove an upper bound on the free energy of essentially arbitrary  mean-field fully connected vector spin glasses. The class of models studied here covers multi-type and vector spins, allows for the inclusion of variables coming from Poisson-Dirichlet cascades, and makes minimal assumptions on the reference probability measure for the spins. Let $D \ge 1$ be an integer that will be kept fixed throughout the paper, $\xi$ be a locally Lipschitz function defined on the set $\R^{D\times D}$ of $D$-by-$D$ matrices, $(\mcl H_N)_{N \in \N}$ be a sequence of finite-dimensional Hilbert spaces, and suppose that, for each $N \in \N$, there exists a centered Gaussian random field $(H_N(\sigma))_{\si \in \mcl H_N^D}$ with covariance structure given, for every $\si,\tau \in \mcl H_N^D$, by
\begin{equation}  
\label{e.def.cov}
\E \Ll[ H_N(\si) H_N(\tau) \Rr] = N \xi \Ll( \frac{\si \tau^*}{N} \Rr) ,
\end{equation}
where the notation $\si \tau^*$ denotes the matrix of scalar products 
\begin{equation}  
\label{e.def.matrix}
\si \tau^* = \Ll( \si_d \cdot \tau_{d'} \Rr)_{1 \le d,d' \le D}.
\end{equation}
We also give ourselves, for each $N \in \N$, a ``reference'' probability measure $P_N$ on $\mcl H_N^D$, and we assume that
\begin{equation}
\label{e.ass.PN}
\mbox{the support of $P_N$ is contained in the ball of $\mcl H_N^D$ of radius $\sqrt{N}$}.
\end{equation}
We understand the notion of ball (centered at the origin) with respect to the norm derived from the scalar product on $\mcl H_N^D$ given, for each $\si = (\si_1,\ldots, \si_D)$ and $\tau = (\tau_1,\ldots, \tau_D) \in \mcl H_N^D$, by
\begin{equation}  
\label{e.scal.prod.hnd}
\si \cdot \tau := \sum_{d = 1}^D \si_d \cdot \tau_d.
\end{equation}
The focus of the paper concerns the large-$N$ behavior of the free energy
\begin{equation}  
\label{e.vanilla.free.energy}
\frac 1 N \E \log \int \exp \Ll( H_N(\si) \Rr) \, \d P_N(\si).
\end{equation}
Identifying the limit of this quantity is a fundamental step towards developing a better understanding of the corresponding Gibbs measure. 

Naturally, the identification of the large-$N$ limit of the quantity in \eqref{e.vanilla.free.energy} is only possible if we assume some notion of asymptotic convergence for the reference measure $P_N$. The relevant notion of convergence will be explained in details below; in a nutshell, we are asking that a class of free energies associated with ``one-body'' (or ``non-interacting'') Hamiltonians converge as $N$ tends to infinity. A loose analogy would be to compare this with a requirement on the convergence of some Laplace transform of the measure; we refer to the actual requirement as concerning the convergence of the ``cascade transform'' of the measure $P_N$. In concrete examples, the measure $P_N$ is usually constructed as a product of low-dimensional measures, or as a product of uniform measures on high-dimensional spheres. In such cases, one can verify that the cascade transform of the measure $P_N$ converges as $N$ tends to infinity; see for instance \cite[Proposition~3.1]{parisi}.

In order to state the main result, we need to introduce some more notation. We denote by $S^D$ the space of symmetric matrices, and by $S^D_+$ and $S^D_{++}$ the susbsets of positive semidefinite and positive definite matrices respectively. For every $a,b \in \R^{D\times D}$, we denote the natural scalar product between $a$ and $b$ by $a \cdot b := \tr(a^* b)$, and $|a| := (a\cdot a)^\frac 1 2$, where $a^*$ is the transpose of $a$. For every metric space~$E$, we denote by $\mcl P(E)$ the space of Borel probability measures on $E$, and, for every $p \in [1,\infty]$, by $\mcl P_p(E)$ the subspace of $\mcl P(E)$ of probability measures with finite $p$-th moment. We write $\de_x$ for the Dirac probability measure at $x \in E$. We say that a measure $\mu \in \mcl P(S^D_+)$ is \emph{monotonically coupled}, or simply \emph{monotone}, if, letting $X$ and $X'$ be two independent random variables with law $\mu$, we have for every $a,b \in S^D_+$ that
\begin{equation}  
\label{e.def.mon.meas}
\P \Ll[ a \cdot X < a \cdot X' \ \text{ and } b \cdot X > b \cdot X' \Rr] = 0.
\end{equation}
We denote by $\mcl P^{\uparrow}(S^D_+)$ the set of such measures, and set $\mcl P^\uparrow_p(S^D_+) := \mcl P^\uparrow(S^D_+) \cap \mcl P_p(S^D_+)$.

For any function $g : A \to B$ over (possibly partially) ordered sets $A$ and $B$, we say that $g$ is \emph{increasing} (over $A$) if
\begin{equation*}  
\forall a,a' \in A, \quad a \le a' \quad \implies \quad g(a) \le g(a').
\end{equation*}
We say that a mapping $\bar \xi : S^D_+ \to \R$ is \emph{proper} if $\bar \xi$ is increasing over $S^D_+$, and for every $b \in S^D_+$, the mapping $a \mapsto \bar \xi(a+b) - \bar \xi(a)$ is increasing over $S^D_+$. Here and throughout, we understand that the partial order on $S^D$ is that defined by the convex cone~$S^D_+$; that is, for every  $q,q' \in S^D$, we have $q \le q'$ if and only if $q'-q \in S^D_+$. We say that a mapping $\bar \xi : S^D_+ \to \R$ is a \emph{regularization} of the mapping $\xi : \R^{D \times D} \to R$ appearing in~\eqref{e.def.cov} if (1) the mappings $\bar \xi$ and $\xi$ coincide on the subset of positive semidefinite matrices with entries in $[-1,1]$; (2) the mapping $\bar \xi$ is uniformly Lipschitz; and (3) the mapping $\bar \xi$ is proper. Here is the main result of the paper.
 
\begin{theorem}  
\label{t.main1}
Let $\bar \xi$ be a regularization of $\xi$, and assume that the cascade transform of the measure $P_N$ converges to the function $\psi : \mcl P^\uparrow_2(S^D_+) \to \R$ as $N$ tends to infinity, in the sense of Definition~\ref{def.cascade.transf} below. For every $t \ge 0$, we have
\begin{equation}  
\label{e.main1}
\liminf_{N \to \infty} -\frac 1 N \E \log \int \exp \Ll( \sqrt{2t} H_N(\si) - Nt\xi\Ll(\frac {\si \si^*}{N}\Rr) \Rr) \, \d P_N(\si) \ge f(t,\de_0),
\end{equation}
where $f = f(t,\mu) : \R_+ \times \mcl P_2^\uparrow(S^D_+)\to \R$ is the solution to
\begin{equation}  
\label{e.hj}
\Ll\{
\begin{aligned}
& \dr_t f - \int \bar \xi(\dr_\mu f) \ \d  \mu  = 0 & \quad \text{on } \ \R_+ \times \mcl P_2^\uparrow(S^D_+),
\\
& f(0,\cdot ) = \psi & \quad \text{on } \ \mcl P_2^\uparrow(S^D_+).
\end{aligned}
\Rr.
\end{equation}
\end{theorem}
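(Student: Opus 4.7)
The plan is to prove \eqref{e.main1} via a Guerra-type interpolation combined with the Hopf-Lax representation of the solution of \eqref{e.hj}. First I would introduce an enriched free energy $\bar F_N(t,\mu)$ for $\mu \in \mcl P_2^\uparrow(S^D_+)$, obtained by appending to the Hamiltonian in \eqref{e.def.cov} a Ruelle probability cascade (RPC) term whose matrix-valued overlap structure is encoded by $\mu$. The enrichment is to be chosen so that $\bar F_N(t,\delta_0)$ coincides with the free energy on the left of \eqref{e.main1}, while $\bar F_N(0,\mu)$ converges to $\psi(\mu)$ by the cascade transform hypothesis.

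Given a smooth monotone path $(\mu_s)_{s \in [0,t]}$ in $\mcl P_2^\uparrow(S^D_+)$ with $\mu_t = \delta_0$, the main step is a Guerra interpolation that continuously deforms the pure cascade model $\bar F_N(0, \mu_0)$ at $s=0$ into the original model $\bar F_N(t,\delta_0) = F_N(t)$ at $s=t$. Differentiating in $s$, Gaussian integration by parts on the original field produces terms $\E \la \xi(R^{1,2}) \ra$ with $R^{1,2} = \si^1 (\si^2)^*/N$, while the standard RPC cavity identities for the cascade field produce terms involving $\bar\xi$ evaluated at the cascade overlap. Up to vanishing error, the derivative is a Bregman-type divergence of $\bar\xi$ at the pair $(R^{1,2}, \mu_s)$; the assumption \eqref{e.ass.PN} that $\si$ is supported in the ball of radius $\sqrt{N}$ forces $R^{1,2} \in S^D_+$ with entries in $[-1,1]$, so $\xi(R^{1,2}) = \bar\xi(R^{1,2})$ and the Bregman form is well-defined. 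The properness of $\bar\xi$ then supplies a one-sided inequality $\bar\xi(a) - \bar\xi(b) - D\bar\xi(b) \cdot (a-b) \ge 0$ for $a,b \in S^D_+$ comparable in the cone order, playing the role of convexity in the classical Guerra bound, and the monotone coupling hypothesis on $\mu$ keeps $R^{1,2}$ and the cascade overlap order-comparable so this sign conclusion applies. Integrating in $s$ and letting $N \to \infty$ using the cascade transform convergence then produces, for every admissible path, a variational lower bound for $\liminf_N F_N(t)$; optimizing over the family of monotone paths terminating at $\delta_0$ recovers $f(t,\delta_0)$ via the Hopf-Lax formula for \eqref{e.hj}.

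The main obstacle is making the cavity/Bregman computation of the previous paragraph fully rigorous in the vector-valued setting: this requires constructing an RPC with matrix-valued overlaps compatible with $\mcl P_2^\uparrow(S^D_+)$, carrying out a cavity decomposition in which the finite-$N$ error terms genuinely vanish, and verifying that the monotone coupling structure \eqref{e.def.mon.meas} is preserved along the interpolation so that properness of $\bar\xi$ can be invoked with the correct sign. A secondary difficulty is identifying the resulting variational expression with the unique weak or viscosity solution of \eqref{e.hj}, which requires developing an adequate PDE framework on the infinite-dimensional convex cone $\mcl P_2^\uparrow(S^D_+)$ and a corresponding Hopf-Lax representation.
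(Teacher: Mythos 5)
Your proposal departs from the paper in a way that runs into a genuine obstruction. The core of your argument is a Guerra-type interpolation whose error term you want to control by a Bregman inequality for $\bar\xi$, followed by an optimization over paths identified with a Hopf-Lax formula for \eqref{e.hj}. Both steps fail in the generality of the theorem, and for the same reason: $\xi$ is not assumed convex on $S^D_+$, only proper. The paper is explicit that the Hopf-Lax representation of the solution to \eqref{e.hj} is available only under convexity of $\xi$, and that naive Guerra/Parisi-type variational formulas are actually \emph{false} for non-convex models such as the bipartite model (see the discussion citing Section~6 of \cite{bipartite}); covering exactly these models is the point of Theorem~\ref{t.main1}. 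As for the sign of the interpolation derivative: properness of $\bar\xi$ does give $\bar\xi(a)-\bar\xi(b)-\nabla\bar\xi(b)\cdot(a-b)\ge 0$ when $a$ and $b$ are comparable in the cone order, but the random overlap $R^{1,2}=\si\si'^*/N$ has no reason to be comparable with the deterministic cascade overlap at a given interpolation time. Monotonicity of $\mu$ only orders the atoms of $\mu$ among themselves; it says nothing about where $R^{1,2}$ falls relative to them, and $R^{1,2}$ need not even be symmetric. Forcing such comparability is precisely the synchronization problem, and in the paper it is achieved only after adding Ghirlanda-Guerra perturbations and only \emph{at contact points} of test functions (Proposition~\ref{p.synchr} and the proof of Proposition~\ref{p.super}), together with a separate argument showing the skew part of $\si\si'^*$ vanishes; it cannot be arranged uniformly along an interpolation path, which is what your scheme requires.

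The paper's actual route avoids both difficulties: it makes sense of \eqref{e.hj} as the limit of finite-dimensional viscosity problems on the cones $\bar U_K$ with a Neumann condition and a suitably extended nonlinearity $\H_K$ (Section~\ref{s.visc}, Proposition~\ref{p.conv.finite.dim}), proves that subsequential limits of the enriched free energy are supersolutions up to an error $C/K$ (Proposition~\ref{p.super}), and concludes by the comparison principle, with Theorem~\ref{t.main} then specializing to \eqref{e.main1} at $\mu=\de_0$. Your setup of the enriched free energy and the use of the cascade transform for the initial condition do match the paper, but the interpolation-plus-Hopf-Lax core would at best reprove the known convex case; to repair the argument for general proper $\xi$ you would essentially need to import the perturbation, synchronization, symmetrization, and viscosity-comparison machinery that constitutes the paper's proof.
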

In all likelihood, the solution to \eqref{e.hj} does not depend on the choice of the regularization~$\bar \xi$. This point is discussed more precisely in Remark~\ref{r.barxi} below. 
At first sight, the requirement that~$\bar \xi$ be proper seems to impose a constraint on the function~$\xi$ itself. However, we will show in Proposition~\ref{p.charact} and Remark~\ref{r.charact} that if a function $\xi$ satisfies~\eqref{e.def.cov} for some random field $(H_N(\si))_{\si \in \mcl H_N}$, and admits a power series expansion, then it must be proper. I do not know if the assumption that $\xi$ admits a power series expansion is necessary. For any locally Lipschitz and proper function~$\xi$, an explicit construction of a regularization will be given in Subsection~\ref{ss.regularization}.


The additional term $N t\xi \Ll( \frac{\si \si^*}{N} \Rr)$ in \eqref{e.main1} is natural, since it normalizes the exponential to have expectation equal to $1$, but it may be seen as a hindrance to gaining information about \eqref{e.vanilla.free.energy}. When $\xi$ is convex, this term can be removed a posteriori as explained in~\cite{HJsoft}. In the general case, one should be able to obtain a description of the limit of~\eqref{e.vanilla.free.energy} by adding a finite-dimensional variable into the equation \eqref{e.hj} (so as to describe the infinite-volume limit of the quantity in \cite[(1.12)]{HJsoft} with $s = t$ there). 

The quantity $\dr_\mu f$ appearing in \eqref{e.hj} is a transport-type derivative. We will make sense of the equation \eqref{e.hj} as the limit of finite-dimensional approximations, based on replacing the set of measures $\mcl P^\uparrow_2(S^D_+)$ by the set of measures that are made of at most $K$ Dirac masses, and then letting $K$ tend to infinity. The precise definition of $\dr_\mu f$ when $\mu$ is a sums of Dirac masses is relatively straightforward and will be described more precisely in~\eqref{e.def.dmu}. Informally, for a given measure $\mu \in \mcl P(S^D_+)$ of finite support and $x \in \supp \mu$, the quantity $\dr_\mu f(t,\mu,x)$ measures the linear response of $f(t,\mu)$ to a variation in the position of the Dirac mass at the position $x$. A more explicit writing of the integral in \eqref{e.hj} is
\begin{equation*}  
\int \bar \xi(\dr_\mu f) \, \d \mu = \int \bar \xi(\dr_\mu f(t,\mu,x)) \, \d \mu(x). 
\end{equation*}

In the case when $\xi$ is convex over $S^D_+$, the solution to \eqref{e.hj} can at least heuristically be rewritten using the Hopf-Lax formula. Under this restrictive assumption on $\xi$, it is very likely that the arguments in \cite{parisi,HJsoft} carry over and allow us to show that the formula thus derived for the quantity $f(t,\de_0)$ appearing on the right side of \eqref{e.main1} is indeed equal to the variational formulas obtained in earlier works, which we review next. 

\subsection{Previous works}

An important part of the literature focuses on the Sherrington-Kirkpatrick model, which corresponds to the case when $D = 1$, $\mcl H_N = \R^N$, $\xi(r) = r^2$, and $P_N$ is a product of Bernoulli $\pm 1$ random variables. In this context, fundamental insights were obtained in the physics literature using sophisticated non-rigorous techniques \cite{MPV}. This includes a proposed variational formula for the limit free energy of the model, now known as the Parisi formula \cite{parisi79,parisi80}. Rigorous justifications of this formula were later obtained in \cite{gue03, Tpaper,Tbook1,Tbook2, pan.aom,pan}, ultimately covering the more general class of $p$-spin models, for which $D = 1$, $\mcl H_N = \R^N$, and $\xi(r) = r^p$ for some integer $p \ge 2$. This was then further generalized in different directions, including by relaxing the requirement that the support of $P_N$ be a subset of a sphere \cite{pan05,pan.vec}, or covering certain cases in which spins have multiple types \cite{pan.multi}, or are vector-valued \cite{pan.potts,pan.vec}. 

As explained in Section~\ref{s.examples}, all these models can be represented in such a way that~\eqref{e.def.cov} holds. It is critical to the validity of the variational formulas proved in these works that the function $\xi$ be \emph{convex} over $S^D_+$. As argued in \cite[Section~6]{bipartite}, naive extensions of these formulas to models for which $\xi$ does not satisfy this property are false; and  there is at present no known variational formula that could serve as a plausible candidate for the limit free energy of such models. An example of a model which fails to satisfy this convexity requirement on $\xi$ is a bipartite model, in which the spins are organized in two different layers, and the only direct interactions are between spins in different layers. 

The present work is part of an ongoing effort to identify the limit free energy of spin glass models using a different point of view that connects it to the solution of a certain Hamilton-Jacobi equation. Heuristic connections between limit free energies and partial differential equations were first pointed out in \cite{gue01, barra1, abarra, barra2}, under a replica-symmetric or one-step replica symmetry breaking assumption. In a different setting that relates to statistical inference, the possibility to relate the limit free energy with Hamilton-Jacobi equations was demonstrated rigorously in \cite{HJinfer, HJrank,HB1,HBJ}. 
That there exists a connection between the Parisi formula and a Hamilton-Jacobi equation was discovered in~\cite{parisi}; see also~\cite{HJsoft}. 
The connection between the Parisi formula and the solution of the partial differential equation makes use of the Hopf-Lax representation of the solution. However, this Hopf-Lax representation is only valid under the convexity assumption on $\xi$.

The present work is a generalization of \cite{bipartite}, in which a result analogous to Theorem~\ref{t.main1} is proved for the bipartite model. As will be explained in Section~\ref{s.enriched}, see in particular Theorem~\ref{t.main} and \eqref{e.almost.equation}, the proof of Theorem~\ref{t.main1} rests crucially on the possibility to argue for the synchronization of certain overlaps. Overlaps are scalar products between different independent copies of the random variables of interest, sampled according the Gibbs measure for fixed disorder couplings; denoting by $\si, \si'$ two such copies, we will want to get a synchronization result for $\si \si'^*$. Compared with \cite{bipartite}, there are several new aspects that need to be taken care of. One is that we can only ``synchronize'' the symmetric part of $\si \si'^*$, so we need to argue separately that the antisymmetric part becomes asymptotically negligible. The other is that the description of ``synchronized'' random variables taking values in $S^D$ is more involved than in the setting of $\R^2$ that was explored in \cite{bipartite}. The arguments for synchronization and symmetrization employed here rely heavily on the approach developed in \cite{pan.multi, pan.potts, pan.vec} to this effect. These considerations rely themselves on the fundamental property of {ultrametricity} of the Gibbs measure, which was obtained in \cite{pan.aom}. Other works which utilize this synchronization mechanism, and which all pertain to the setting investigated here, include \cite{jakose,weikuo,conmin,adhbre,ko}.

Compared with \cite{bipartite}, the treatment of the boundary condition for \eqref{e.hj} also poses new difficulties. This relates to the fact that the geometry of the space $\mcl P^\upa(S^D_+)$ is more intricate than that of the space $\mcl P^\upa(\R^2_+)$ (or $\mcl P^\upa(\R^D_+)$) that was appearing in \cite{bipartite}. Roughly speaking, although we expect the solution to \eqref{e.hj} to be such that the mapping $x \mapsto \dr_{\mu} f(\mu,x)$ is nonnegative and increasing over the support of $\mu$, we will extend the nonlinearity $\dr_\mu f \mapsto \int \bar \xi (\dr_\mu f) \, \d \mu$ outside of this ``natural'' set; and the extension we choose is not what the notation $\int \bar \xi (\dr_\mu f) \, \d \mu$ would suggest. 

It would of course be desirable to prove the bound converse to \eqref{e.main1}. Having a general upper bound for the limit free energy, as obtained here, might be useful for this very purpose, in analogy with the original proof of the Parisi formula developed in \cite{Tpaper,Tbook2}. 

To conclude this subsection, we also point out that in the very recent works \cite{subag1,subag1.5,subag2}, a generalized TAP approach is developed that yields the identification of the limit free energy of any pure multi-species spherical $p$-spin model, under a modest assumption of convergence of the free energy. 

\subsection{Organization of the paper} In Section~\ref{s.monotone}, which can be read independently of the rest of the paper, we explore the notion of a monotone coupling for a random variable taking values in $S^D_+$. Returning to the problem at hand, in Section~\ref{s.enriched} we introduce an enriched version of the free energy that also depends on a measure $\mu \in \mcl P^\upa_1(S^D_+)$. A heuristic calculation concluded in \eqref{e.almost.equation} suggests that this enriched free energy is an approximate solution to \eqref{e.hj}. Theorem~\ref{t.main}, generalizing Theorem~\ref{t.main1},  states an inequality between the limit of the enriched free energy and the solution to \eqref{e.hj}. In Section~\ref{s.visc}, we clarify the meaning of \eqref{e.hj} using finite-dimensional approximations, and in particular, we explain rigorously how the nonlinearity $\dr_\mu f \mapsto \int \bar \xi (\dr_\mu f) \, \d \mu$ is extended outside of its ``natural'' domain. We then move on to the proof of Theorem~\ref{t.main} in Section~\ref{s.super}. Finally, Section~\ref{s.examples} describes a variety of models that can be represented in the form of \eqref{e.def.cov}, and verifies that for all these models, and in fact  for essentially every conceivable model, the corresponding function $\xi$ is proper, and admits a regularization.

%
%
%
%
%
%
\section{Monotone couplings}
\label{s.monotone}
In this section, we study the notion of monotone measures over $S^D_+$, which was introduced around \eqref{e.def.mon.meas}. We explore various characterizations of this property, including that every such random variable can be realized as the image of the uniform measure over $[0,1]$ by an increasing mapping, see Proposition~\ref{p.uparrow.bij} below. This is related to considerations appearing in \cite{pan.potts, pan.vec}; the main point here is to clarify what part of the arguments are in fact internal to the concept of a monotone coupling, and unrelated to spin-glass considerations. 

 We start by recalling the notion in the simpler setting of measures over $\R^2$. 
Let $(X,Y)$ be a pair of real random variables (with respect to the probability $\P$, with expectation $\E$). We say that the pair $(X,Y)$ is \emph{monotonically coupled}, or simply \emph{monotone}, if, with $(X',Y')$ denoting an independent copy of $(X,Y)$, we have
\begin{equation}  
\label{e.monotone.r2}
\P \Ll[ X < X' \ \text{ and } \ Y' < Y \Rr] = 0.
\end{equation}
We also say that a measure $\mu \in \mcl P(\R^2)$ is \emph{monotonically coupled}, or more simply \emph{monotone}, whenever a random pair $(X,Y)$ with law $\mu$ satisfies this property.

For any real random variable $X$ and $x \in \R$, we write
\begin{equation}  
\label{e.def.cdf}
F_X(x)  := \P \Ll[ X \le x \Rr],
\end{equation}
and, for every $u \in [0,1]$,
\begin{equation}  
\label{e.def.cdfinv}
F_X^{-1}(u) := \inf \Ll\{ x \in \R \ : \ \P \Ll[ X \le x \Rr] \ge u \Rr\}  \in \R \cup \{\pm \infty\} .
\end{equation}
The next proposition provides with  equivalent characterisations of monotone couplings, for real-valued random variables. 
\begin{proposition}[Monotone coupling in $\R^2$]
\label{p.mono.coupl.r2}
Let $(X,Y)$ be a pair of real random variables, and $U$ be a uniform random variable over $[0,1]$. The following three statements are equivalent.

(1) The pair $(X,Y)$ is monotonically coupled.

(2) For every $(x,y)$ in the support of the law of $(X,Y)$, we have
\begin{equation}  
\label{e.coupl.support}
\P \Ll[ X < x \ \text{ and } \  Y > y \Rr] = 0.
\end{equation}

(3) For every $x, y \in \R$, we have
\begin{equation}  
\label{e.mono.coupl.r2}
\P \Ll[ X \le x \ \text{ and }\  Y \le y \Rr] = \min \Ll( \P \Ll[ X \le x \Rr] , \P \Ll[ Y \le y \Rr]  \Rr) .
\end{equation}

(4) The law of the pair $(X,Y)$ is that of $(F_X^{-1}(U),F_Y^{-1}(U))$.

(5) The following relations hold almost surely
\begin{equation*}  
X = F_X^{-1}(F_{X+Y}(X+Y)) \ \text{ and } \ Y = F_Y^{-1}(F_{X+Y}(X+Y)).
\end{equation*}
\end{proposition}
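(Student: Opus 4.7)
The plan is to close the cycle $(1)\Rightarrow(2)\Rightarrow(3)\Rightarrow(4)\Rightarrow(5)\Rightarrow(1)$. The conceptual content is the equivalence of the ``pointwise'' monotonicity statements (1), (2) with the explicit representations (3), (4), (5) via a common uniform driver (or, in (5), via the sum $X+Y$); once this is in place, the remaining arrows are short Fubini-type computations together with elementary bookkeeping for the generalized inverses.

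I would begin with $(1)\Rightarrow(2)$: if $(x,y)\in\supp(X,Y)$ then for every $\varepsilon>0$ the open box of radius $\varepsilon$ around $(x,y)$ has positive mass for the independent copy $(X',Y')$. Combined with the inclusion
\[
\{X<x-\varepsilon,\,Y>y+\varepsilon\}\cap\{|X'-x|<\varepsilon,\,|Y'-y|<\varepsilon\}\subset\{X<X',\,Y'<Y\}
\]
and the independence of the two pairs, (1) forces $\P[X<x-\varepsilon,\,Y>y+\varepsilon]=0$; letting $\varepsilon\downarrow 0$ gives (2). The converse $(2)\Rightarrow(1)$ is immediate by Fubini: $\P[X<X',\,Y'<Y]=\int \P[X<x,\,Y>y]\,\d\mu(x,y)$ with $\mu$ the law of $(X',Y')$, whose support has full $\mu$-mass and on which (2) makes the integrand vanish. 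For $(2)\Rightarrow(3)$ I argue by contradiction: assume both $\P[X\leq x,\,Y>y]>0$ and $\P[X>x,\,Y\leq y]>0$. Then $\supp(X,Y)$ contains some $(x^*,y^*)$ with $x^*\leq x$, $y^*>y$ and some $(\tilde x,\tilde y)$ with $\tilde x>x$, $\tilde y\leq y$. The first point lies in the \emph{open} set $\{(u,v):u<\tilde x,\,v>\tilde y\}$, which by (2) applied at $(\tilde x,\tilde y)$ has probability zero---contradicting the defining property of a support point. Hence at least one of these two probabilities vanishes, which is exactly $F(x,y)=\min(F_X(x),F_Y(y))$.

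For $(3)\Rightarrow(4)$ I compute directly: using the elementary equivalence $F^{-1}(u)\leq t\iff u\leq F(t)$, the joint CDF of $(F_X^{-1}(U),F_Y^{-1}(U))$ equals $\min(F_X(x),F_Y(y))$, which by (3) matches the joint CDF of $(X,Y)$. The step $(4)\Rightarrow(5)$ needs a little care because the non-decreasing map $\Phi(u):=F_X^{-1}(u)+F_Y^{-1}(u)$ may have flats and jumps. I would compute $F_{X+Y}(\Phi(u))=u^*:=\sup\{v:\Phi(v)=\Phi(u)\}$, so that $F_{X+Y}(X+Y)=U^*$ almost surely; since both $F_X^{-1}$ and $F_Y^{-1}$ are constant on every flat of $\Phi$, we obtain $F_X^{-1}(U^*)=F_X^{-1}(U)=X$ a.s., and likewise for $Y$, giving (5).

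The closing implication $(5)\Rightarrow(1)$ is essentially automatic: writing $V:=F_{X+Y}(X+Y)$, if $(X',Y',V')$ is an independent copy and $X<X'$, then $F_X^{-1}(V)<F_X^{-1}(V')$ forces $V<V'$ (otherwise monotonicity of $F_X^{-1}$ would give the reverse inequality), which in turn yields $Y=F_Y^{-1}(V)\leq F_Y^{-1}(V')=Y'$, ruling out $Y'<Y$. I expect the main obstacles to lie in the support-based contradiction in $(2)\Rightarrow(3)$---where one must verify the open-null-set inconsistency carefully---and in the flat/jump bookkeeping in $(4)\Rightarrow(5)$; both are routine but demand some care with the generalized inverses near atoms and constancy intervals.
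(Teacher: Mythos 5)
Your proof is correct, but it follows a genuinely different and more self-contained route than the paper's. The paper does not prove the equivalence of (1), (3), (4) at all: it imports it from an earlier reference, and then only establishes (1)$\iff$(2) and (1)$\iff$(5), the delicate part being (1)$\Rightarrow$(5), which it obtains from two one-sided inequalities (one via $F_X(F_X^{-1}(u))\ge u$ together with the fact that $X$ and $X+Y$ are monotonically coupled, the other via a support argument giving $F_X^{-1}(F_{X+Y}(x+y))\le x$ on the support). You instead close the full cycle $(1)\Rightarrow(2)\Rightarrow(3)\Rightarrow(4)\Rightarrow(5)\Rightarrow(1)$ from scratch; the two ingredients that have no counterpart in the paper are the support-contradiction argument for $(2)\Rightarrow(3)$ (which is correct: a support point of the first kind would sit in an open set that (2), applied at the second support point, declares null) and the analysis of the flats of $\Phi=F_X^{-1}+F_Y^{-1}$ for $(4)\Rightarrow(5)$, using that a sum of two nondecreasing functions is constant on an interval only if both are, together with left-continuity of the generalized inverses so that the right endpoint $u^*$ of a flat still lies in it. Your approach buys independence from the external citation and makes visible exactly where each hypothesis is used; the paper's is shorter and, by working with $(X,Y)$ directly, avoids one reduction that you should state explicitly: in $(4)\Rightarrow(5)$ you verify the identity for the particular coupling $(F_X^{-1}(U),F_Y^{-1}(U))$, and this suffices only because (5) is the statement that $(X,Y)$ a.s.\ lies in a Borel set defined through $F_X^{-1}\circ F_{X+Y}$ and $F_Y^{-1}\circ F_{X+Y}$, functions determined by the law of $(X,Y)$ alone, so its probability is law-invariant. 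With that sentence added (and the routine check $F^{-1}(u)\le t\iff u\le F(t)$ for $u\in(0,1)$ in $(3)\Rightarrow(4)$), your argument is complete.
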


\begin{proof}
The equivalence between the statements (1), (3) and (4) was shown in \cite[Proposition~5.2]{bipartite}. 

The fact that (2) implies (1) is clear. We now show the converse implication. We thus assume that Statement (1) holds, and let $(x,y)$ be in the support of the law of $(X,Y)$. For each $\ep > 0$, we have
\begin{equation*}  
\P \Ll[ X \ge x - \ep \ \text{ and }  \ Y \le y+\ep  \Rr] > 0.
\end{equation*}
By the definition of a monotone coupling, see \eqref{e.monotone.r2}, we infer that
\begin{equation*}  
\P\Ll[X < x - \ep \ \text{ and } \ Y > y + \ep \Rr] = 0.
\end{equation*}
Letting $\ep$ go to zero, we obtain \eqref{e.coupl.support}. 

One can check that (5) implies (1) using the monotonicity of $F_X$ and $F_X^{-1}$. Conversely, suppose that (1) holds. We notice that for every $u \in (0,1)$, the infimum in the definition of $F_X^{-1}$ is achieved, and therefore $F_X(F_X^{-1}(u)) \ge u$. We now use this inequality with $X$ substituted by $X+Y$. 
Observing also that $X$ and $X+Y$ are monotonically coupled, and that (4) holds, we deduce that, almost surely,
\begin{equation*}  
X \le F_X^{-1} (F_{X+Y}(X+Y)) .
\end{equation*}
To show the converse implication, we start by observing that, for every $(x,y)$ in the support of the law of $(X,Y)$, we have
\begin{equation}  
\label{e.ineq.xy}
\P \Ll[ X \le x \Rr] \ge \P[X+Y \le x + y].
\end{equation}
Indeed, we have by (2) (and the symmetry of the definition of monotone coupling) that
\begin{equation*}  
\P \Ll[ X > x \ \text{ and } \ Y < y \Rr]  = 0.
\end{equation*}
In other words, outside of an event of null measure, we have the implication
\begin{equation}  
\label{e.impl.coupl}
X > x \implies Y \ge y.
\end{equation}
Using the decomposition
\begin{align*}  
\P \Ll[ X + Y \le x + y  \Rr] \le \P \Ll[ X \le x \Rr]  + \P \Ll[ X + Y \le x + y \ \text{ and } \ X > x \Rr],
\end{align*}
and noticing that, by \eqref{e.impl.coupl}, the last probability must be zero, we obtain \eqref{e.ineq.xy}. As a consequence,
\begin{equation*}  
F_X^{-1}(F_{X+Y}(x+y)) \le F_X^{-1}(F_X(x)) \le x.
\end{equation*}
Since this inequality is valid for every $(x,y)$ in the support of the law of $(X,Y)$, this completes the proof.
\end{proof}
We also record the following observations, which will be convenient later on.
\begin{lemma}
\label{l.relation.fxy}
Under the assumptions of Proposition~\ref{p.mono.coupl.r2}, we have
\begin{equation}  
\label{e.relation.fxy1}
F_{X+Y}^{-1}(U) = F_X^{-1}(U) + F_Y^{-1}(U) \qquad \text{a.s.}
\end{equation}
and
\begin{equation}  
\label{e.relation.fxy2}
F_X^{-1}(U) = F_X^{-1}(F_{X+Y}(F_{X+Y}^{-1}(U))) \qquad \text{a.s.}
\end{equation}
\end{lemma}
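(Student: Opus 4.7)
My plan is to derive both identities from the explicit representations provided by parts~(4) and~(5) of Proposition~\ref{p.mono.coupl.r2}, combined with the standard fact that a given probability law on $\R$ admits a \emph{unique} nondecreasing and left-continuous pushforward of the uniform measure on $(0,1)$, namely its quantile function $F^{-1}$ as defined in~\eqref{e.def.cdfinv}.

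For \eqref{e.relation.fxy1}, I would set $h(u) := F_X^{-1}(u) + F_Y^{-1}(u)$ and observe that $h$ is nondecreasing and left-continuous on $(0,1)$, being the sum of two such functions. By Proposition~\ref{p.mono.coupl.r2}(4), the pair $(X,Y)$ has the same joint law as $(F_X^{-1}(U), F_Y^{-1}(U))$, so in particular $X+Y$ has the same law as $h(U)$. On the other hand, $F_{X+Y}^{-1}(U)$ also has the law of $X+Y$. The uniqueness statement recalled above then forces $h(u) = F_{X+Y}^{-1}(u)$ for Lebesgue-a.e.\ $u \in (0,1)$, hence $h(U) = F_{X+Y}^{-1}(U)$ almost surely, which is \eqref{e.relation.fxy1}. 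The uniqueness statement itself is not hard to extract: computing the distribution function of $h(U)$ directly gives a right-continuous increasing function whose left-continuous generalized inverse (in the sense of \eqref{e.def.cdfinv}) is forced to be $h$, by the monotonicity and left-continuity of $h$.

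For \eqref{e.relation.fxy2}, I would invoke Proposition~\ref{p.mono.coupl.r2}(5) to write, almost surely,
\begin{equation*}
X = F_X^{-1}\bigl(F_{X+Y}(X+Y)\bigr).
\end{equation*}
This is an almost sure identity between measurable functions of the pair $(X,Y)$. By Proposition~\ref{p.mono.coupl.r2}(4), $(X,Y)$ and $(F_X^{-1}(U), F_Y^{-1}(U))$ have the same joint distribution, so the identity transfers by the change-of-variables formula: almost surely,
\begin{equation*}
F_X^{-1}(U) = F_X^{-1}\Bigl(F_{X+Y}\bigl(F_X^{-1}(U) + F_Y^{-1}(U)\bigr)\Bigr).
\end{equation*}
Substituting $F_X^{-1}(U) + F_Y^{-1}(U) = F_{X+Y}^{-1}(U)$ from \eqref{e.relation.fxy1} yields \eqref{e.relation.fxy2}.

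The only genuinely delicate point is the uniqueness of the monotone rearrangement used in the proof of \eqref{e.relation.fxy1}; everything else amounts to bookkeeping with the representations already provided by Proposition~\ref{p.mono.coupl.r2}. In particular, the second identity should \emph{not} be approached by trying to show $F_{X+Y}(F_{X+Y}^{-1}(U)) = U$ almost surely, which generally fails whenever the law of $X+Y$ has atoms; the argument above sidesteps this by keeping the composition $F_X^{-1} \circ F_{X+Y}$ together, which collapses the ambiguity created by atoms.
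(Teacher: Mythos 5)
Your argument is correct, but it takes a genuinely different route from the paper's, at least for \eqref{e.relation.fxy1}. The paper never invokes uniqueness of the monotone rearrangement: it adds the part-(5) identities $X = F_X^{-1}(F_{X+Y}(X+Y))$ and $Y = F_Y^{-1}(F_{X+Y}(X+Y))$ to the generic identity $Z = F_Z^{-1}(F_Z(Z))$ applied with $Z = X+Y$, then transfers the resulting almost sure identity from the random variable $F_{X+Y}(X+Y)$ to $U_{X+Y} := F_{X+Y}(F_{X+Y}^{-1}(U))$, which has the same law, and finally uses $F_X^{-1}(U) = F_X^{-1}(U_{X+Y})$ and $F_{X+Y}^{-1}(U) = F_{X+Y}^{-1}(U_{X+Y})$; in other words the paper proves \eqref{e.relation.fxy2} first (by transferring the part-(5) identity through the monotone pair $(X,X+Y)$ via part (4)) and feeds it into the proof of \eqref{e.relation.fxy1}. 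You reverse the logical order: \eqref{e.relation.fxy1} comes from part (4) alone together with the standard fact that a law on $\R$ has a unique nondecreasing, left-continuous representative as a pushforward of the uniform measure, applied to $h = F_X^{-1} + F_Y^{-1}$; then \eqref{e.relation.fxy2} follows by transporting the part-(5) identity through the equality in law of part (4) (using the pair $(X,Y)$ rather than $(X,X+Y)$) and substituting \eqref{e.relation.fxy1}. What your route buys is a self-contained and arguably cleaner proof of \eqref{e.relation.fxy1} that avoids the $U_{X+Y}$ bookkeeping and the interleaving of the two identities; what it costs is the auxiliary uniqueness lemma, which in turn requires the left-continuity of $F_X^{-1}$ on $(0,1)$ as defined in \eqref{e.def.cdfinv} --- your sketch of that point is correct, but it is an extra ingredient the paper does not need. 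Your closing remark is also accurate: $F_{X+Y}(F_{X+Y}^{-1}(U)) = U$ a.s.\ fails precisely when the law of $X+Y$ has atoms, which is why both your argument and the paper's keep the composition $F_X^{-1}\circ F_{X+Y}$ intact rather than attempting to cancel it.
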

\begin{proof}
The relation \eqref{e.relation.fxy2} follows from the fact that $X$ and $X+Y$ are monotonically coupled, and parts (4) and (5) of Proposition~\ref{p.mono.coupl.r2}. Turning to the proof of \eqref{e.relation.fxy1}, we start by observing that, for any random variable $Z$, we have 
\begin{equation}  
\label{e.compose.FX}
Z = F_X^{-1}(F_X(Z)) \qquad \text{a.s.}
\end{equation}
This can be recovered from part (5) of Proposition~\ref{p.mono.coupl.r2}, using that $0$ and $Z$ are monotonically coupled. Using this property once more, we thus deduce that
\begin{equation*}  
F_{X+Y}^{-1}(F_{X+Y}(X+Y)) = F_{X}^{-1}(F_{X+Y}(X+Y)) + F_Y^{-1}(F_{X+Y}(X+Y)) \qquad \text{a.s.}
\end{equation*}
Defining
\begin{equation*}  
U_{X+Y} := F_{X+Y}(F_{X+Y}^{-1}(U)),
\end{equation*}
we have shown that 
\begin{equation*}  
F_{X+Y}^{-1}(U_{X+Y}) = F_X^{-1}(U_{X+Y}) + F_Y^{-1}(U_{X+Y}) \qquad \text{a.s.}
\end{equation*}
Using parts (4) and (5) of Proposition~\ref{p.mono.coupl.r2} once more, we see that
\begin{equation*}  
F_X^{-1}(U) = F_X^{-1} (U_{X+Y}) \qquad \text{a.s.}
\end{equation*}
as well as
\begin{equation*}  
F_{X+Y}^{-1}(U) = F_{X+Y}^{-1} (U_{X+Y}) \qquad \text{a.s.}
\end{equation*}
Combining the last three displays, we obtain \eqref{e.relation.fxy1}.
\end{proof}

Let $X$ be a random variable taking values in $S^D_+$. We say that $X$ is \emph{monotonically coupled}, or more simply \emph{monotone}, if for every $a,b \in S^D_+$, the pair $(a\cdot X, b \cdot X)$ is monotonically coupled. We also say that a measure $\mu \in \mcl P(S^D_+)$ is \emph{monotonically coupled}, or more simply \emph{monotone}, whenever a random variable with law $\mu$ satisfies this property. In this case, we write $\mu \in \mcl P^\uparrow(S^D_+)$. 

The next proposition gives equivalent characterizations of monotone couplings over~$S^D_+$. Recall the definitions in \eqref{e.def.cdf} and \eqref{e.def.cdfinv}.

\begin{proposition}[Monotone coupling in $S^D_+$]
\label{p.mono.coupl.sd}
Let $\mu \in \mcl P(S^D_+)$, $X$ be a random variable with law $\mu$, and $U$ be a uniform random variable over $[0,1]$. The following two statements are equivalent.

(1) The measure $\mu$ is monotonically coupled.

(2) There exists a random variable $X'$ with law $\mu$ such that, for every $a \in S^D_+$, 
\begin{equation}  
\label{e.couple.X'}
a \cdot X' = F_{a\cdot X}^{-1}(U) \qquad \text{a.s.}
\end{equation}

Under this circumstance, we have, for every $a \in S^D_+$, 
\begin{equation}
\label{e.mono.coupl.sd}
\P \Ll[ X \le a \Rr] = \inf_{b \in S^D_+} \P \Ll[ b \cdot X \le b \cdot a \Rr] .
\end{equation}
\end{proposition}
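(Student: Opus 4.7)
The direction (2)$\Rightarrow$(1) is immediate: if $X'$ is as in (2), then for any $a, b \in S^D_+$ the random variables $a \cdot X' = F_{a \cdot X}^{-1}(U)$ and $b \cdot X' = F_{b \cdot X}^{-1}(U)$ are non-decreasing deterministic functions of the common uniform $U$, so the pair $(a \cdot X', b \cdot X')$ is monotonically coupled by part~(4) of Proposition~\ref{p.mono.coupl.r2}, and since $X' \stackrel{d}{=} X$ the same holds for $(a \cdot X, b \cdot X)$.

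For the substantive direction (1)$\Rightarrow$(2), my plan is to realize $X'$ as a deterministic function of $U$ valued in $S^D_+$, obtained by extending, in the matrix variable $a$, the quantile map $a \mapsto F_{a \cdot X}^{-1}(U)$. I fix a countable dense $\mcl D \subset S^D_+$ that is closed under addition and multiplication by positive rationals, and set $\phi_u(a) := F_{a \cdot X}^{-1}(u)$. Identity~\eqref{e.relation.fxy1} of Lemma~\ref{l.relation.fxy}, applied to the monotonically coupled pair $(a\cdot X, b\cdot X)$, gives $\phi_u(a+b) = \phi_u(a)+\phi_u(b)$ for a.e.\ $u$, and positive homogeneity is immediate. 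Intersecting countably many full-measure events indexed by pairs in $\mcl D$ produces a full-measure $\Omega_0 \subset [0,1]$ on which $\phi_u$ is additive, $\Q_+$-homogeneous, monotone (because $a\le b$ in $S^D_+$ forces $a\cdot X \le b\cdot X$ a.s.) and non-negative on $\mcl D$; a standard extension by approximation yields a continuous $\R$-linear functional on $S^D$, necessarily of the form $b \mapsto b \cdot X'(u)$ for a unique $X'(u) \in S^D$. The non-negativity of $\phi_u$ on $S^D_+$ combined with the self-duality of the cone gives $X'(u) \in S^D_+$. Approximating a general $a \in S^D_+$ by elements of $\mcl D$ and invoking the convergence of inverse CDFs at continuity points then yields $a \cdot X'(U) = F_{a \cdot X}^{-1}(U)$ a.s.\ for every $a \in S^D_+$.

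To verify $X' \stackrel{d}{=} X$, I appeal to the Cram\'er-Wold device: it suffices to show $b \cdot X' \stackrel{d}{=} b \cdot X$ for every $b \in S^D$. Writing $b = b_+ - b_-$ with $b_\pm \in S^D_+$ via the spectral decomposition, linearity of $X'$ gives $b \cdot X' = F_{b_+ \cdot X}^{-1}(U) - F_{b_- \cdot X}^{-1}(U)$, and part~(4) of Proposition~\ref{p.mono.coupl.r2} applied to the monotonically coupled pair $(b_+\cdot X, b_-\cdot X)$ gives $(F_{b_+ \cdot X}^{-1}(U), F_{b_- \cdot X}^{-1}(U)) \stackrel{d}{=} (b_+\cdot X, b_-\cdot X)$. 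Taking differences, $b\cdot X' \stackrel{d}{=} b\cdot X$, as required.

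Formula~\eqref{e.mono.coupl.sd} then follows quickly. The inequality $\P[X\le a] \le \inf_{b} \P[b\cdot X \le b\cdot a]$ is trivial from $\{X\le a\} \subset \{b\cdot X \le b\cdot a\}$. Conversely, passing through $X'$, the event $\{X'\le a\}$ is equivalent (by continuity of $b \mapsto b\cdot X'(U)$ in $b$) to $b\cdot X'(U) = F_{b\cdot X}^{-1}(U) \le b\cdot a$ for all $b$ in the countable dense set $\mcl D$, which in turn is equivalent (by right-continuity of the CDFs) to $U \le F_{b\cdot X}(b\cdot a)$ for all $b \in \mcl D$; hence $\P[X'\le a] = \inf_{b\in\mcl D} \P[b\cdot X \le b\cdot a]$. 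The Portmanteau theorem implies that $b \mapsto \P[b\cdot(X-a)\le 0]$ is upper semicontinuous, so this infimum agrees with $\inf_{b\in S^D_+}\P[b\cdot X \le b\cdot a]$. The main technical obstacle in executing the plan is the null-set bookkeeping in the construction of $X'$: Lemma~\ref{l.relation.fxy} furnishes additivity only pair-by-pair, up to a null set depending on the pair, so some care is required when combining the countable dense approximation with a continuity argument in $a$ in order to propagate the quantile identity to every $a \in S^D_+$.
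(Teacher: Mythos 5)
Your proof is correct, but it takes a genuinely different route from the paper's at all three stages, so let me compare. For the construction of $X'$, the paper avoids any dense-set/extension machinery: it fixes a finite family $(b_1,\ldots,b_K)\subset S^D_+$ forming a basis of $S^D$, defines $X'$ by $b_k\cdot X'=F^{-1}_{b_k\cdot X}(U)$, and then, for each fixed $a\in S^D_+$, obtains \eqref{e.couple.X'} from finitely many applications of \eqref{e.relation.fxy1} together with the exact homogeneity $F^{-1}_{\lambda Z}=\lambda F^{-1}_Z$ (write $a=\sum_k c_k b_k$ with real $c_k$ and move the negative-coefficient terms to the other side), so the null set may depend on $a$ but no approximation in $a$ is needed. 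Your dense sub-cone $\mcl D$ plus extension to a linear functional accomplishes the same thing at the price of the continuity step you only sketch --- it does go through, e.g.\ via the monotonicity bound $|F^{-1}_{a\cdot X}(u)-F^{-1}_{a'\cdot X}(u)|\le \ep\, F^{-1}_{\tr X}(u)$ when $a\le a'+\ep\,\mathrm{Id}$ and $a'\le a+\ep\,\mathrm{Id}$, with $F^{-1}_{\tr X}(u)<\infty$ for $u\in(0,1)$ --- and of the null-set bookkeeping you rightly flag; in exchange you produce a genuine map $u\mapsto X'(u)$, which is essentially what the paper constructs anyway in Proposition~\ref{p.uparrow.bij}. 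For the equality of laws, you use Cram\'er--Wold on one-dimensional marginals via the spectral splitting $b=b_+-b_-$ and part~(4) applied to the monotone pair $(b_+\cdot X,b_-\cdot X)$; the paper instead shows that the vectors $(b_k\cdot X)_k$ and $(b_k\cdot X')_k$ are the \emph{same} deterministic function of the scalars $b\cdot X$ and $b\cdot X'$ respectively (part~(5) and \eqref{e.relation.fxy2} with $b=b_1+\cdots+b_K$), identifying the joint law directly without characteristic functions; your version is the more standard device, the paper's keeps the coupling structure explicit. Finally, for \eqref{e.mono.coupl.sd} you read the identity off the coupling via the Galois inequality $F^{-1}_Z(u)\le x\iff u\le F_Z(x)$ and a Portmanteau upper-semicontinuity argument to pass from $\mcl D$ to $S^D_+$, whereas the paper proves the finite min-formula \eqref{e.induct.min} by induction from part~(3) and then takes a decreasing limit of events; both are sound, and yours makes \eqref{e.mono.coupl.sd} an immediate byproduct of statement~(2), while the paper's yields \eqref{e.induct.min} as a small bonus.
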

\begin{proof}
We decompose the proof into two steps.

\emph{Step 1.} By Proposition~\ref{p.mono.coupl.r2}, the fact that (2) implies (1) is clear. We now turn to the converse implication. We write $K := \frac{D(D+1)}{2}$, and let $(b_1,\ldots, b_K)$ be a family of elements of $S^D_+$ that form a basis of $S^D$. We can construct a random variable $X'$ taking values in~$S^D$ such that, for every $k \in \{1,\ldots,K\}$,
\begin{equation*}  
b_k \cdot X' = F^{-1}_{b_k\cdot X}(U).
\end{equation*}
Notice that for any random variable $Z$ and $\lambda >0$, we have $F^{-1}_{\lambda Z} = \lambda F^{-1}_Z$. Using also \eqref{e.relation.fxy1}, we deduce that the identity \eqref{e.couple.X'} holds for every $a \in S^D_+$. There remains to show that the random variables $X$ and $X'$ have the same law.

We first observe that, for every $a,b,b' \in S^D_+$ and $\lambda > 0$ satisfying $b = \lambda a + b'$, we have
\begin{equation}  
\label{e.aX.as.bX}
a \cdot X = F_{a \cdot X}^{-1} \Ll( F_{b \cdot X}(b \cdot X) \Rr)  \qquad \text{a.s.}
\end{equation}
Indeed, this follows using part (5) of Proposition~\ref{p.mono.coupl.r2} with $X$ and $Y$ there replaced by $\lambda a \cdot X$ and $b' \cdot X$ respectively, and using again that $F_{\lambda a \cdot X}^{-1} = \lambda F_{a \cdot X}^{-1}$. By the same argument combined with \eqref{e.relation.fxy2},  we also have
\begin{equation*}  
a \cdot X' = F_{a \cdot X}^{-1} \Ll( F_{b \cdot X}(b \cdot X') \Rr)  \qquad \text{a.s.}
\end{equation*}
In particular, setting $b := b_1 + \cdots + b_k$, we see that the vectors $(b_k \cdot X)_{1 \le k \le K}$ and $(b_k \cdot X')_{1 \le k \le K}$ can be obtained as images, under the same mapping, of the variables $b \cdot X$ and $b\cdot X'$ respectively. Since the latter random variables have the same law, we deduce that the random vectors have the same law as well. This implies that the laws of $X$ and $X'$ are the same (and in particular, that $X'$ takes values in $S^D_+$ with probability one), as desired.

\emph{Step 2.} To prepare for the proof of \eqref{e.mono.coupl.sd}, we start by showing that, for every sequence $(b_n)_{n \in \N}$ of elements of $S^D_+$ and $N \in \N$, we have
\begin{equation}  
\label{e.induct.min}
\P \Ll[ \forall n \in \{0,\ldots ,N\} , \ b_n \cdot X \le b_n \cdot a \Rr] = \min_{n \in \{0,\ldots, N\} } \P \Ll[  b_n \cdot X \le b_n \cdot a \Rr]  .
\end{equation}
We prove this by induction on $N$. The case $N = 0$ is obvious. Fixing $N \in \N$ and assuming that the identity \eqref{e.induct.min} holds, we now show that it also holds with $N$ replaced by $N+1$. By the induction hypothesis, there exists $n_0 \in \{0,\ldots, N\}$ such that, up to an event of null measure, the events
\begin{equation*}  
\forall n \in \{0,\ldots, N\} , \ b_n \cdot X \le b_n \cdot a  \quad \text{ and } \quad b_{n_0}\cdot X \le b_{n_0} \cdot a 
\end{equation*}
coincide. Combining this with the fact that $X$ is monotone and part (3) of Proposition~\ref{p.mono.coupl.r2}, we obtain
\begin{align*}  
\P \Ll[ \forall n \in \{0,\ldots, N+1\} , \ b_n \cdot X \le b_n \cdot a \Rr] 
& = \P \Ll[ b_{n_0}\cdot X \le b_{n_0} \cdot a  \ \text{ and } \ b_{N+1}\cdot X \le b_{N+1} \cdot a  \Rr] 
\\
& = \min \Ll( \P \Ll[ b_{n_0}\cdot X \le b_{n_0} \cdot a\Rr], \P \Ll[ b_{N+1}\cdot X \le b_{N+1} \cdot a  \Rr]  \Rr) 
\\
& = \min_{n \in \{0,\ldots, N+1\} } \P \Ll[  b_n \cdot X \le b_n \cdot a \Rr]  .
\end{align*}
This completes the induction argument.

We now show \eqref{e.mono.coupl.sd}. This identity with the equal sign replaced by ``$\le$'' is clearly valid. Conversely, recall first (see for instance \cite[Lemma~2.2]{HJrank}) that, for every $q \in S^D$, we have
\begin{equation} 
\label{e.charact.psd}
q \ge 0 \quad \iff \quad \forall b \in S^D_+ \ b \cdot q \ge 0 .
\end{equation}
Let $(b_n)_{n \in \N}$ be a sequence of elements of $S^D_+$ that is dense in $S^D_+$. We have
\begin{align*}  
\P \Ll[ X \le a \Rr]  
& = \P \Ll[ \forall b \in S^D_+ , \ b \cdot X \le b \cdot a \Rr] 
\\
& = \P \Ll[ \forall n \in \N , \ b_n \cdot X \le b_n \cdot a \Rr]
\\
& = \lim_{N \to \infty} \P \Ll[ \forall n \in \{0,\ldots, N\} , \ b_n \cdot X \le b_n \cdot a \Rr].
\end{align*}
By \eqref{e.induct.min}, the latter probability is of the form $\P \Ll[ b \cdot X \le b \cdot a \Rr]$ for some $b \in S^D_+$ (which depends on $N$). This yields the converse bound, and thus completes the argument.
\end{proof}
A slight variation of the argument above also gives the following alternative description of the set $\mcl P^\uparrow(S^D_+)$.
\begin{proposition}
\label{p.uparrow.bij}
Define
\begin{equation*}  
\mcl M := \Ll\{ M : [0,1) \to S^D_+ \ : \ M \text{ is right-continuous with left limits, and is increasing} \Rr\} ,
\end{equation*}
where we recall that ``$M$ is increasing'' means that, for every $u,v \in [0,1)$,
\begin{equation*}  
u \le v \quad \implies \quad M(u) \le M(v) . 
\end{equation*}
Recall also that $U$ denotes a uniform random variable over $[0,1]$. The mapping
\begin{equation}  
\label{e.uparrow.bij}
\Ll\{
\begin{array}{rcl}
\mcl M & \to & \mcl P^\uparrow(S^D_+) \\
M & \mapsto & \mbox{law of $M(U)$}
\end{array}
\Rr.
\end{equation}
is a bijection. We denote the preimage of the measure $\mu \in \mcl P^\uparrow(S^D_+)$ by $M_\mu$.
\end{proposition}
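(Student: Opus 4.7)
The plan is to prove injectivity and surjectivity separately, leveraging the basis construction from Step~1 of the proof of Proposition~\ref{p.mono.coupl.sd} to reduce $S^D$-valued questions to scalar quantile statements. For injectivity, suppose $M_1, M_2 \in \mcl M$ induce the same $\mu \in \mcl P^\uparrow(S^D_+)$. For any fixed $b \in S^D_+$, each map $u \mapsto b \cdot M_i(u)$ is increasing and right-continuous from $[0,1)$ to $\R$, and $b \cdot M_1(U)$ and $b \cdot M_2(U)$ have the same law. Any increasing right-continuous function $g : [0,1) \to \R$ is determined by the law of $g(U)$: it must coincide with the right-continuous quantile $u \mapsto \inf\{x \in \R : \P[g(U) \le x] > u\}$, since two monotone right-continuous functions agreeing Lebesgue-a.e.\ are equal everywhere. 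Hence $b \cdot M_1 = b \cdot M_2$ on $[0,1)$ for every $b \in S^D_+$, and taking a basis $(b_1,\ldots,b_K)$ of $S^D$ contained in $S^D_+$ yields $M_1 = M_2$.

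For surjectivity, fix $\mu \in \mcl P^\uparrow(S^D_+)$, let $X \sim \mu$, and let $X'$ be the coupled variable produced by Proposition~\ref{p.mono.coupl.sd}, so that $a \cdot X' = F_{a \cdot X}^{-1}(U)$ a.s.\ for every $a \in S^D_+$. With $(b_1,\ldots,b_K)$ a basis of $S^D$ lying in $S^D_+$, set
\[
\varphi_k(u) := \inf\{x \in \R : \P[b_k \cdot X \le x] > u\},
\]
which is increasing and right-continuous on $[0,1)$. Define $M(u) \in S^D$ as the unique element with $b_k \cdot M(u) = \varphi_k(u)$ for all $k$. The map $v \mapsto (b_k \cdot v)_{k=1}^K$ is a linear isomorphism $S^D \to \R^K$, so $M$ is a continuous linear function of $(\varphi_1,\ldots,\varphi_K)$, and is therefore right-continuous with left limits in $S^D$.

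Three checks close the argument. First, $M(U)$ has law $\mu$: each $\varphi_k$ coincides with $F_{b_k \cdot X}^{-1}$ off a countable set, so $b_k \cdot M(U) = b_k \cdot X'$ a.s.\ for every $k$, which gives $M(U) = X'$ a.s.\ because the $b_k$ span $S^D$. Second, $M$ is $S^D_+$-valued and increasing: for every $a \in S^D_+$, combining $a \cdot X' = F_{a \cdot X}^{-1}(U)$ with $M(U) = X'$ gives $a \cdot M(U) = F_{a \cdot X}^{-1}(U)$ a.s., and both sides as functions of $u \in [0,1)$ admit right-continuous representatives (the left-hand side by construction of $M$, the right-hand side by adopting the $>$ convention in the infimum). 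Two right-continuous monotone functions that agree Lebesgue-a.e.\ coincide everywhere, so $a \cdot M(u) \ge 0$ and $u \mapsto a \cdot M(u)$ is increasing for every $a \in S^D_+$ and every $u$; \eqref{e.charact.psd} then places $M(u)$ in $S^D_+$ and shows $M$ is increasing in the $S^D_+$-order. The main technical point to handle carefully is this reconciliation between the left-continuous convention of \eqref{e.def.cdfinv} and the right-continuity demanded by $\mcl M$: Proposition~\ref{p.mono.coupl.sd} supplies only almost-sure identities in $U$, which must be upgraded to everywhere identities in $u$ by invoking right-continuity of both sides.
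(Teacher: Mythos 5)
Your proof is correct and follows essentially the same route as the paper: reduce to scalar quantiles along a basis of $S^D$ contained in $S^D_+$ via Proposition~\ref{p.mono.coupl.sd}, and reconcile the quantile convention of \eqref{e.def.cdfinv} with the right-continuity required of $\mcl M$ (the paper builds the left-continuous version $M^-$ first and then takes right limits, while you work with the right-continuous quantile from the start and upgrade the almost-sure identities to everywhere identities; that upgrade needs only right-continuity of both sides, so your invocation of monotonicity of $u \mapsto a\cdot M(u)$, which is not yet known at that point, is superfluous rather than a gap). You additionally write out the injectivity argument, which the paper leaves as a routine check.
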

\begin{proof}
One can check that the mapping in \eqref{e.uparrow.bij} is well-defined and injective. Using the notation of Proposition~\ref{p.mono.coupl.sd} and Step 1 of its proof, we now slightly rephrase the construction of the random variable $X'$ appearing there and show that the mapping in~\eqref{e.uparrow.bij} is surjective. For each $u \in (0,1)$, there exists a unique matrix $M^-(u)$ such that for every $k \in \{1,\ldots, K\}$, we have
\begin{equation*}  
b_k \cdot M^-(u) = F_{b_k \cdot X}^{-1}(u).
\end{equation*}
The mapping $u \mapsto M^-(u)$ is left-continuous with right limits on $(0,1)$. Moreover, we have shown that $M^-(U)$ takes values in $S^D_+$ with probability one, and that for every $a \in S^D_+$, 
\begin{equation*}  
a \cdot M^-(U) = F_{a\cdot X}^{-1}(U) \qquad \text{a.s.}
\end{equation*}
Since both sides of this identity are left-continuous with right limits, we deduce that $M^-$ takes values in $S^D_+$ and that, for every $u \in (0,1)$ and $a \in S^D_+$, we have
\begin{equation*}  
a \cdot M^-(u) = F_{a\cdot X}^{-1}(u).
\end{equation*}
In particular, for every $a \in S^D_+$, we have that the mapping $u \mapsto a \cdot M(u)$ is increasing over $(0,1)$. This implies that the mapping $u \mapsto M^-(u)$ is increasing over $(0,1)$. Summarizing, we have identified, for each $\mu \in \mcl P^\upa(S^D_+)$, a mapping $M^- : (0,1) \to S^D_+$ which is increasing, left-continuous with right limits, and such that the law of $M^-(U)$ is $\mu$. For each $u \in [0,1)$, we set
\begin{equation*}  
M(u) := \lim_{v \downarrow u} M^-(v).
\end{equation*}
Since $M^-$ takes values in $S^D_+$, this quantity is well-defined and takes values in $S^D_+$. It is also increasing, and right-continuous with left limits at every point of $[0,1)$. Finally, since $M^-$ has only a countable number of points of discontinuity, we also have that the law of $M(U)$ is $\mu$. This completes the proof.
\end{proof}
Notice that the mappings $M_\mu$ allow us to define a joint coupling over all the measures in the set $\mcl P^\uparrow(S^D_+)$. Although we will not really need this fact, we observe in the next proposition that these are the optimal transport couplings for a very large class of cost functions, in analogy with the one-dimensional setting.
\begin{proposition}[Optimal transport in $\mcl P^\uparrow(S^D_+)$]
\label{p.transport}
Let $c : S^D_+ \times S^D_+ \to \R_+$ be a right-continuous function satisfying, for every $x,x',y,y' \in S^D_+$,
\begin{equation}  
\label{e.Monge.condition}
x \le x' \ \text{ and } \ y \le y' \quad \implies \quad c(x',y') + c(x,y) \le c(x,y') + c(x',y).
\end{equation}
Let $(X,Y)$ be a pair of random variables such that the law of $X$ is $\mu \in \mcl P^\uparrow(S^D_+)$ and the law of $Y$ is $\nu \in \mcl P^\uparrow(S^D_+)$. If 
\begin{equation*}  
\int c(x,y) \, \d \mu(x) \, \d \nu(y) < \infty,
\end{equation*}
then 
\begin{equation}  
\label{e.transport}
\E \Ll[ c(M_\mu(U), M_\nu(U)) \Rr] \le \E \Ll[ c(X,Y) \Rr] .
\end{equation}
\end{proposition}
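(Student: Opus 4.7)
The strategy is to reduce the proposition to the classical one-dimensional rearrangement inequality: for real random variables $S, T$ with fixed marginals and a right-continuous submodular cost, the expected cost is minimized by the comonotone coupling $(F_S^{-1}(U), F_T^{-1}(U))$. The reduction rests on the observation, implicit in Proposition~\ref{p.uparrow.bij}, that a random variable with monotone law is a measurable function of a single real-valued statistic.

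To carry out the reduction, I fix once and for all a basis $(b_1, \ldots, b_K)$ of $S^D$ with each $b_k \in S^D_+$, and set $b := b_1 + \cdots + b_K \in S^D_{++}$; this choice does not depend on the measure under consideration. The proof of Proposition~\ref{p.uparrow.bij} shows that for any $\rho \in \mcl P^\uparrow(S^D_+)$ with $Z \sim \rho$, the family $(b_k \cdot Z)_{1 \le k \le K}$ is a deterministic function of $b \cdot Z$, so there exists a Borel map $\Phi_\rho : \R \to S^D_+$ with $Z = \Phi_\rho(b \cdot Z)$ almost surely. Because $b \in S^D_{++}$ satisfies $b \cdot q > 0$ for every nonzero $q \in S^D_+$, the map $\Phi_\rho$ preserves order on the range of $b \cdot Z$, and it can be extended to an increasing, right-continuous map $\R \to S^D_+$. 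Applying this to $\mu$ and $\nu$ yields $\Phi_\mu, \Phi_\nu$; I set $S := b \cdot X$ and $T := b \cdot Y$, so that $X = \Phi_\mu(S)$ and $Y = \Phi_\nu(T)$ almost surely.

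Define $\tilde c(s,t) := c(\Phi_\mu(s), \Phi_\nu(t))$. The monotonicity of $\Phi_\mu, \Phi_\nu$ combined with the Monge condition \eqref{e.Monge.condition} for $c$ immediately yields the two-dimensional submodularity $\tilde c(s', t') + \tilde c(s, t) \le \tilde c(s, t') + \tilde c(s', t)$ whenever $s \le s'$ and $t \le t'$ in $\R$; right-continuity transfers from $c$ to $\tilde c$ through $\Phi_\mu, \Phi_\nu$. The one-dimensional rearrangement inequality then gives
\[
\E \bigl[ \tilde c (F_S^{-1}(U), F_T^{-1}(U)) \bigr] \le \E \bigl[ \tilde c(S, T) \bigr] = \E \bigl[ c(X, Y) \bigr].
\]

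To identify the left-hand side with $\E [ c(M_\mu(U), M_\nu(U))]$, observe that both $F_S^{-1}(U)$ and $b \cdot M_\mu(U)$ are increasing functions of $U$ with law $S$, hence coincide almost surely; consequently $\Phi_\mu(F_S^{-1}(U)) = \Phi_\mu(b \cdot M_\mu(U)) = M_\mu(U)$, and analogously for $\nu$. Substituting back gives \eqref{e.transport}. The only delicate point is to verify that $\Phi_\mu$ is well-defined, order-preserving, and right-continuous on the range of $b \cdot X$; I expect this to follow from the injectivity afforded by $b \in S^D_{++}$ together with the constructions already carried out in the proof of Proposition~\ref{p.uparrow.bij}, so that the proof is essentially a bookkeeping exercise once the correct 1D formulation has been isolated.
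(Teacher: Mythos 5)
Your argument is sound and ultimately leans on the same key ingredient as the paper: the classical one-dimensional theorem that the comonotone (quantile) coupling is optimal for right-continuous submodular costs. The difference is the reduction. The paper pulls the problem back to $[0,1]$: it introduces $M_\mu^{-1}(x) := \sup\{u \in [0,1) : M_\mu(u) \le x\}$, proves $X = M_\mu(M_\mu^{-1}(X))$ a.s., sets $\td c(u,v) := c(M_\mu(u), M_\nu(v))$, and applies the interval result to the laws of $M_\mu^{-1}(X)$ and $M_\nu^{-1}(Y)$ — no auxiliary matrix $b$ and no newly constructed map are needed. You instead push forward to $\R$ through the scalar statistic $b \cdot X$ with $b \in S^D_{++}$ and a section $\Phi_\mu$ with $X = \Phi_\mu(b\cdot X)$ a.s. This does work: since $b \in S^D_{++}$, every $a \in S^D_+$ satisfies $\lambda a \le b$ for some $\lambda > 0$, so Step~1 of the proof of Proposition~\ref{p.mono.coupl.sd} gives $a \cdot X = F_{a\cdot X}^{-1}(F_{b\cdot X}(b\cdot X))$ a.s.\ for \emph{all} $a \in S^D_+$, which both defines $\Phi_\mu$ consistently and, via \eqref{e.charact.psd}, makes it order-preserving. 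The one place where your deferral hides a real (though fixable) wrinkle is right-continuity: $F_{a\cdot X}^{-1}\circ F_{b\cdot X}$ composes a left-continuous with a right-continuous map and is not automatically right-continuous. The clean repair is to take $\Phi_\mu := M_\mu \circ F_{b\cdot X}$, which is increasing and right-continuous as a composition of such maps, and to check $X = \Phi_\mu(b\cdot X)$ a.s.\ using that $b \cdot M_\mu$ coincides a.e.\ with the quantile function of $b\cdot X$ and that $M_\mu$ is constant on any interval where $b\cdot M_\mu$ is constant (positive definiteness of $b$ forces this); with that choice your identification $\Phi_\mu(F_{b\cdot X}^{-1}(U)) = M_\mu(U)$ also holds as stated, yielding \eqref{e.transport}. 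Net assessment: same key lemma as the paper, with a $\R$-valued rather than $[0,1]$-valued parametrization that costs you some extra verification; that verification is genuinely available from the paper's earlier propositions, but it should be carried out rather than deferred.
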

\begin{remark}  
In Proposition~\ref{p.transport}, the statement that $c$ is right-continuous means that for every two sequences $(x_n)$ and $(y_n)$ of elements of $S^D_+$ satisfying, for every $n$, the inequalities $x_n \ge x$ and $y_n \ge y$, and such that $x_n \to x$ and $y_n \to y$ as $n$ tends to infinity, we have $c(x_n,y_n) \to c(x,y)$. Examples of functions satisfying the condition \eqref{e.Monge.condition} include any convex function of $x-y$. 
\end{remark}
\begin{proof}[Proof of Proposition~\ref{p.transport}]
For each $\mu \in \mcl P^\uparrow(S^D_+)$, we denote
\begin{equation*}  
M_\mu^{-1} :
\Ll\{
\begin{array}{rcl}  
\supp \mu & \to & [0,1] \\
x & \mapsto & \sup \Ll\{ u \in [0,1) \ : \ M_\mu(u) \le x  \Rr\} .
\end{array}
\Rr.
\end{equation*}
Since $\supp \mu$ is the closure of the image of $M_\mu$, the set appearing in the definition of $M_\mu^{-1}$ is not empty. The mapping $M_\mu^{-1}$ is right-continuous with left limits. Informally, it plays the role of a cumulative distribution function, similarly to the functions $F_X$ in one dimension. Letting $(X,Y)$ be as in the statement of the proposition, we have, similarly to \eqref{e.compose.FX},
\begin{equation}  
\label{e.compose.Mmu}
X = M_\mu(M_\mu^{-1}(X)) \qquad \text{a.s.}
\end{equation}
(This is obtained by noting that, for every $x \in \supp \mu$, we have $M_\mu (M_\mu^{-1}(x)) \le x$, while for every $u \in [0,1)$, we have $M_\mu^{-1}(M_\mu(u)) \ge u$.)
In particular,
\begin{equation*}  
\E \Ll[ c(X,Y) \Rr] = \E \Ll[ c\Ll(M_\mu(M_\mu^{-1}(X)), M_\nu(M_\nu^{-1}(Y))\Rr) \Rr] .
\end{equation*}
We consider the optimal transport problem on $[0,1]$ with cost function given, for every $u,v \in [0,1]$, by
\begin{equation*}  
\td c(u,v) := c \Ll( M_\mu(u), M_\nu(v) \Rr) .
\end{equation*}
By the property \eqref{e.Monge.condition} and the monotonicity of the mappings $M_\mu$ and $M_\nu$, we have, for every $u,u',v,v' \in [0,1]$,
\begin{equation*}  
u \le u' \ \text{ and } \ v \le v' \quad \implies \quad \td c(u',v') + \td c(u,v) \le \td c(u,v') + \td c(u',v).
\end{equation*}
As a consequence (see for instance \cite[Theorem~3.1.2]{racrus}), an optimal transport for the cost function $\td c$ with marginals given by the laws of $M_\mu^{-1}(X)$ and $M_\nu^{-1}(Y)$ respectively is achieved by the monotone coupling between these laws. This monotone coupling is realized by the pair $(M_\mu^{-1}(M_\mu(U)), M_\nu^{-1}(M_\nu(U)))$ (recall from Proposition~\ref{p.mono.coupl.r2} that to verify this, it suffices to observe that this pair is monotonically coupled and has the correct marginals). That is,
\begin{equation*}  
\E \Ll[ c \Ll( M_\mu(M_\mu^{-1}(M_\mu(U))), M_\nu(M_\nu^{-1}(M_\nu(U))) \Rr)  \Rr] 
\le \E \Ll[ c\Ll(M_\mu(M_\mu^{-1}(X)), M_\nu(M_\nu^{-1}(Y))\Rr) \Rr] .
\end{equation*}
Using \eqref{e.compose.Mmu} once more (with $X$ replaced by $M_\mu(U)$), we obtain \eqref{e.transport}.
\end{proof}

%
%
%
%
%
%

\section{Enriched free energy}
\label{s.enriched}

In this section, we define the free energy of an enriched model. Roughly speaking, the enriched model is defined in terms of an energy function which, in addition to the original function $H_N$, also contains another term which can be interpreted as the energy associated with a random magnetic field with an ultrametric structure. 
This structure is parametrized by a measure $\mu \in \mcl P^\uparrow_1(S^D_+)$. 

It is much more convenient to start by defining this quantity under the additional assumption that the measure $\mu$ has finite support, and then argue by density. 
Using~\eqref{e.charact.psd}, one can check that a measure $\mu \in \mcl P^\uparrow(S^D_+)$ with finite support must be of the form
\begin{equation}  
\label{e.def.mu.de}
\mu = \sum_{k = 0}^K (\zeta_{k+1} - \zeta_k) \de_{q_k},
\end{equation}
with $K \in \N$, $\zeta_0,\ldots, \zeta_{K+1} \in \R$ satisfying
\begin{equation}  
\label{e.strict.zeta}
0 = \zeta_0 < \zeta_1 < \ldots < \zeta_{K+1} = 1,
\end{equation}
and $q_{-1},q_0,\ldots, q_K \in S^D_+$ satisfying
\begin{equation}  
\label{e.strict.q}
0 = q_{-1} \le q_0 \le q_1 \le \ldots \le q_{K-1} \le q_K \ \text{ and } \ \forall k \in \{1,\ldots, K\}, q_{k-1} \neq q_k.
\end{equation}
We now very briefly recall some properties of Poisson-Dirichlet cascades, and refer to \cite[(2.46)]{pan} for more precision. 
We denote the rooted tree with (countably) infinite degree and depth $K$ by
\begin{equation}  
\label{e.def.mclA}
\mcl A := \N^{0} \cup \N \cup \N^2 \cup \cdots \cup \N^K,
\end{equation}
where $\N^{0} = \{\emptyset\}$, and $\emptyset$ represents the root of the tree. For every $k \in \{0,\ldots, K\}$ and $\alpha \in \N^k \subset \mcl A$, we write $|\alpha| := k$ to denote the depth of the vertex $\alpha$ in the tree $\mcl A$. For every leaf $\alpha = (n_1,\ldots,n_K)\in \N^K$ and $k \in \{0,\ldots, K\}$, we write
\begin{equation}  
\label{e.def.truncate}
\alpha_{| k} := (n_1,\ldots, n_k),
\end{equation}
with the understanding that $\alpha_{| 0} = \emptyset$. For every $\al, \al' \in \N^K$, we write
\begin{equation*}  
\al \wedge \al' := \sup \Ll\{ k \le K \ : \ \al_{|k} = \al'_{|k} \Rr\} .
\end{equation*}
A Poisson-Dirichlet cascade $(v_\al)_{\al \in \N^K}$ can be interpreted as a probability measure on the set $\N^K$ of leaves of the tree $\mcl A$, since $v_\alpha \ge 0$ and $\sum_{\al \in \N^K} v_\alpha = 1$; it is meant to serve as a ``canonical'' ultrametric structure (associated with the parameters $(\zeta_k)_{1 \le k \le K}$).  
Briefly, the construction of $(v_\alpha)_{\al \in \N^K}$ for~$K \ge 1$ is as follows: the progeny of each non-leaf vertex at level $k \in \{0,\ldots, K-1\}$ is decorated with the values of an independent Poisson point process of intensity measure $\zeta_{k+1} x^{-1-\zeta_{k+1}} \, \d x$, then the weight of a given leaf $\alpha \in \N^K$ is calculated by taking the product of the ``decorations'' attached to each parent vertex, including the leaf vertex itself (but excluding the root, which has no assigned ``decoration''), and finally, these weights over leaves are normalized so that their total sum is~$1$. We take this Poisson-Dirichlet cascade $(v_\alpha)_{\alpha \in \N^K}$ to be independent of $H_N$. 

Since we need to define some refined (ultrametric) analogue of an extraneous random magnetic field, we need to also introduce some additional random Gaussian variables that will play such a role. By definition, a standard Gaussian vector $z$ over a finite-dimensional Hilbert space $\mcl H$ is a random vector taking values in $\mcl H$ and such that, for every $\si, \si' \in \mcl H$, the random variable $\si \cdot z$ is a centered Gaussian and
\begin{equation}  
\label{e.standard.Gaussian}
\E \Ll[ (\si \cdot z)(\si' \cdot z) \Rr] = \si \cdot \si'.
\end{equation}
An explicit construction of $z$ can be obtained by selecting an orthonormal basis $(e_i)_{i \in I}$ of~$\mcl H$, letting $(z_i)_{i \in I}$ be independent standard Gaussians, and setting $z := \sum_{i \in I} z_i e_i$. 
We give ourselves $(z_\al)_{\al \in \mcl A}$ an independent family of standard Gaussian vectors over $\mcl H_N^D$. We take this family to be independent of $H_N$ and of the Poisson-Dirichlet cascade.
For every $\si \in \mcl H_N^D$ and $\al \in \N^K$, we set 
\begin{equation}  
\label{e.def.Hmu}
H_N^\mu(\si,\al) := \sum_{k = 0}^K \Ll( 2 q_k - 2 q_{k-1} \Rr) ^\frac 1 2 z_{\al_{|k}} \cdot \si.  
\end{equation}
Recall that $2 q_k - 2 q_{k-1} \in S^D_+$, and thus the square root of this matrix is well-defined. Any $D$-by-$D$ matrix $a = (a_{d,d'})_{1 \le d,d' \le D}$ acts on $\mcl H_N^D$ according to
\begin{equation*}  
a \sigma = \Ll(\sum_{d' =1 }^D a_{d,d'} \si_{d'}\Rr)_{1 \le d \le D}.
\end{equation*}
It is with this understanding that the expression $\Ll( 2 q_k - 2 q_{k-1} \Rr) ^\frac 1 2 z_{\al_{|k}}$ in \eqref{e.def.Hmu} is understood. (This is nothing but a convenient way to describe a random Gaussian vector with a particular covariance structure depending on the matrix $2q_k - 2q_{k-1}$.) In \eqref{e.def.Hmu}, the resulting element of $\mcl H_N^D$ is then ``dotted'' against $\si$, according to the scalar product in $\mcl H_N^D$ defined in \eqref{e.scal.prod.hnd}. 
Recall that $(H_N(\si))_{\si \in \mcl H_N^D}$ is a centered Gaussian field with covariance given in \eqref{e.def.cov}. For every $t \ge 0$ and $\mu$ as in \eqref{e.def.mu.de}, we define the enriched free energy as
\begin{multline}
\label{e.def.FN}
F_N(t,\mu) 
\\
:= -\frac 1 N \log \int \sum_{\al \in \N^K} \exp \Ll( \sqrt{2t} H_N(\si) - Nt\xi\Ll(\frac{\si \si^*}{N}\Rr) + H_N^\mu(\si,\al) - \si\cdot q_K \si \Rr) \, v_\al \, \d P_N(\si).
\end{multline}
We denote by $\td \E$ the expectation with respect to the randomness coming from the Poisson-Dirichlet cascade $(v_\al)_{\al \in \N^K}$ and the random Gaussian fields $(z_\al)_{\al \in \mcl A}$. Since the only additional source of randomness in the problem comes from the Gaussian field $(H_N(\si))_{\si \in \mcl H_N^D}$, which is independent of $(v_\al)_{\al \in \N^K}$ and $(z_\al)_{\al \in \mcl A}$, the expectation $\td \E$ could alternatively be written as the conditional expectation with respect to $(H_N(\si))_{\si \in \mcl H_N^D}$:
\begin{equation*}  
\td \E \Ll[ \cdot  \Rr] = \E \Ll[ \cdot \vb (H_N(\si))_{\si \in \mcl H_N^D} \Rr] .
\end{equation*}
We define the (partially and fully) averaged free energies
\begin{equation}  
\label{e.def.barFN}
\td F_N(t,\mu) = \td \E \Ll[ F_N(t,\mu)\Rr] \quad \text{ and } \quad \bar F_N(t,\mu) := \E \Ll[ F_N(t,\mu) \Rr] .
\end{equation}
Associated with the partition function $F_N$ is a Gibbs measure, which we denote by $\la \cdot \ra$, with canonical random variable $(\si,\al)$ taking values in $\mcl H_N \times \N^K$. That is, for every bounded measurable function $f :\mcl H_N \times \N^K \to \R$, we have
\begin{multline}  
\label{e.def.Gibbs}
\la f(\si,\al) \ra = \exp (N F_N(t,\mu))
\\
\times  \int \sum_{\al \in \N^K} f(\si,\al) \exp \Ll( \sqrt{2t} H_N(\si) - Nt\xi\Ll(\frac{\si \si^*}{N}\Rr) + H_N^\mu(\si,\al) - \si\cdot q_K \si \Rr) \, v_\al \, \d P_N(\si).
\end{multline}
The measure $\la \cdot \ra$ depends on the choice of the parameters $t$ and $\mu$, although the notation leaves this implicit. We also consider independent copies of the random pair $(\si,\al)$ under~$\la \cdot \ra$, which are often called ``replicas'' and which we write $(\si',\al')$, $(\si'',\al'')$, etc., or, if more replicas are desired, $(\si^\ell, \al^\ell)_{\ell \in \N}$. 
We recall from \cite{Tbook2}, \cite[(2.82)]{pan}, or \cite[Lemma~2.3]{parisi} that, for every $k \in \{0,\ldots,K\}$, we have
\begin{equation}
\label{e.overlap.pdc}
\td \E \la \1_{\{\al \wedge \al' = k\}} \ra = \zeta_{k + 1} - \zeta_k. 
\end{equation}

The next proposition, which is similar to a result in \cite{gue03}, gives a continuity estimate for~$\td F_N(t,\cdot)$. In order to state it, we will appeal to the characterization of monotone measures obtained in Proposition~\ref{p.uparrow.bij}. Throughout the paper, we denote by $U$ a uniform random variable over $[0,1]$, which for notational convenience we assume to be also defined with respect to the probability measure $\P$.
Using the notation introduced in Proposition~\ref{p.uparrow.bij}, we define the random variable
\begin{equation}
\label{e.def.Xmu}
X_\mu := M_\mu(U).
\end{equation}
This construction provides us with a joint coupling of all the measures in $\mcl P^\uparrow(S^D_+)$, and thus allows us to write down optimal-transport-type distances in $\mcl P^\uparrow(S^D_+)$ in a convenient way (see Proposition~\ref{p.transport}). 
\begin{proposition}[Lipschitz continuity of $\td F_N$]
\label{p.lip}
For any two pairs $\mu, \nu \in \mcl P^\upa(S^D_+)$ of measures with finite support and $t \ge 0$, we have
\begin{equation}  
\label{e.lip}
|\td F_N(t,\mu) - \td F_N(t,\nu)| \le \E \Ll[ |X_\mu - X_\nu| \Rr] ,
\end{equation}
and the same inequality also holds with $\td F_N$ replaced by $\bar F_N$. In particular, the functions $\td F_N$ and $\bar F_N$ can be extended by continuity to $\R_+ \times \mcl P^\upa_1(S^D_+)$. 
\end{proposition}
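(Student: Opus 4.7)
The plan is to reduce to a common $\zeta$-representation for $\mu$ and $\nu$, linearly interpolate the support matrices, compute the $s$-derivative of $\td F_N$ along the interpolation using Gaussian integration by parts, and then invoke the cascade identity~\eqref{e.overlap.pdc} to recognize the resulting bound as $\E[|X_\mu - X_\nu|]$.

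First, I would bring $\mu$ and $\nu$ to a shared representation. Writing each in the form \eqref{e.def.mu.de} with its own $\zeta$-sequence, I take the sorted union $0 = \eta_0 < \eta_1 < \cdots < \eta_{L+1} = 1$ and rewrite
\begin{equation*}
\mu = \sum_{\ell = 0}^L (\eta_{\ell + 1} - \eta_\ell) \de_{M_\mu(\eta_\ell)}, \qquad \nu = \sum_{\ell = 0}^L (\eta_{\ell + 1} - \eta_\ell) \de_{M_\nu(\eta_\ell)},
\end{equation*}
where the sequences $(M_\mu(\eta_\ell))_\ell$ and $(M_\nu(\eta_\ell))_\ell$ are weakly (but not necessarily strictly) increasing. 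The definition~\eqref{e.def.FN} extends unambiguously to such representations: an index $\ell$ with $M_\mu(\eta_\ell) = M_\mu(\eta_{\ell-1})$ contributes nothing to \eqref{e.def.Hmu}, and one verifies directly that $F_N$ is invariant under the insertion of such a redundant level, using that the cascade weights over the inserted level integrate to $\sum_\al v_\al = 1$.

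Next, I interpolate linearly by setting $q_\ell(s) := (1-s) M_\mu(\eta_\ell) + s M_\nu(\eta_\ell)$ for $s \in [0,1]$, which preserves $q_\ell(s) \ge q_{\ell - 1}(s)$, and define $\mu_s := \sum_\ell (\eta_{\ell+1} - \eta_\ell) \de_{q_\ell(s)}$. Keeping $(v_\al)$ and $(z_\al)$ fixed along the path, the field $H_N^{\mu_s}$ has covariance $\td \E[H_N^{\mu_s}(\si, \al) H_N^{\mu_s}(\si', \al')] = 2 \si \cdot q_{\al \wedge \al'}(s) \si'$. By the standard Gaussian interpolation identity, the derivative $\frac{d}{ds} \td\E \log Z_s$ splits into a self-interaction contribution from $\tfrac12 \dr_s C_s(\si, \al; \si, \al) = \si (q^\nu_K - q^\mu_K) \si$, and a cross-replica contribution from $\tfrac12 \dr_s C_s(\si, \al; \si', \al')$. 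The self-interaction term exactly cancels the derivative of the deterministic normalizer $-\si \cdot q_K(s) \si$ in \eqref{e.def.FN}, leaving
\begin{equation*}
\frac{d}{ds} \td F_N(t, \mu_s) = \frac{1}{N} \td \E \Ll\la \bigl(M_\nu(\eta_{\al \wedge \al'}) - M_\mu(\eta_{\al \wedge \al'})\bigr) \cdot \si \si'^{*} \Rr\ra_s ,
\end{equation*}
where $\la \cdot \ra_s$ denotes the Gibbs measure at parameter $s$ and $(\si', \al')$ is an independent replica. This precise cancellation is the main technical point of the proof.

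To conclude, I combine~\eqref{e.ass.PN} with Cauchy-Schwarz: $|\si|, |\si'| \le \sqrt N$ gives $|\si \si'^{*}| \le N$ in the Frobenius norm, so $|q \cdot \si \si'^{*}| \le |q| \cdot N$. Hence
\begin{equation*}
\Ll| \frac{d}{ds} \td F_N(t, \mu_s) \Rr| \le \td \E \Ll\la \bigl|M_\nu(\eta_{\al \wedge \al'}) - M_\mu(\eta_{\al \wedge \al'})\bigr| \Rr\ra_s = \sum_{\ell=0}^L (\eta_{\ell+1} - \eta_\ell) \bigl|M_\mu(\eta_\ell) - M_\nu(\eta_\ell)\bigr|,
\end{equation*}
where the equality uses~\eqref{e.overlap.pdc}. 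Since $X_\mu - X_\nu = M_\mu(\eta_\ell) - M_\nu(\eta_\ell)$ on the event $U \in [\eta_\ell, \eta_{\ell+1})$, the right side equals $\E[|X_\mu - X_\nu|]$, and integration over $s \in [0,1]$ yields~\eqref{e.lip}. Applying the outer expectation $\E$ yields the same bound for $\bar F_N$. For the extension to $\R_+ \times \mcl P^\upa_1(S^D_+)$, since any increasing right-continuous map $M_\mu : [0,1) \to S^D_+$ with $\int |M_\mu(u)|\, \d u < \infty$ can be approximated in $L^1$ by step functions, finitely-supported monotone measures are dense in $\mcl P^\upa_1(S^D_+)$ for the distance $(\mu, \nu) \mapsto \E[|X_\mu - X_\nu|]$, and the Lipschitz estimate produces the desired continuous extension. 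The secondary issue requiring care is the representation invariance of $F_N$ invoked in the first step, resting on the marginalization property of Poisson-Dirichlet cascades under the insertion of a level with coinciding support matrix.
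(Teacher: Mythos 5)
Your proof is correct, and it shares the paper's skeleton: reduce $\mu$ and $\nu$ to a common $\zeta$-representation (allowing repeated atoms, justified by the marginalization property of Poisson--Dirichlet cascades), control a derivative by Gaussian integration by parts, and convert the resulting bound into $\E[|X_\mu-X_\nu|]$ via the cascade identity \eqref{e.overlap.pdc} and the support assumption \eqref{e.ass.PN}. The genuine difference is in how the derivative is organized. The paper computes the coordinate-wise derivatives $\dr_{q_k}\td F_N$ exactly (formula \eqref{e.expr.drqell}), which forces it to differentiate the matrix square root in \eqref{e.def.Hmu} and to use the $D_{\sqrt{2q}}$ calculus together with symmetry of $\la \1_{\{\al\wedge\al'\ge k\}}\si\si'^*\ra$; it then bounds $|\dr_{q_k}\td F_N|\le\zeta_{k+1}-\zeta_k$ and sums along the segment. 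You instead run a single Guerra-type interpolation in $s$: since the covariance $2\,q_{\al\wedge\al'}(s)\cdot\si\tau^*$ is \emph{linear} in $s$, the self-term cancels against the normalizer $-\si\cdot q_K(s)\si$ and only the cross-replica term survives, with no matrix-square-root differentiation needed. This is cleaner for the Lipschitz bound alone (the paper's exact formula \eqref{e.expr.drqell} is, however, reused later for \eqref{e.almost.equation}, so the paper gets extra mileage from its computation). One small caution in your write-up: with your realization that keeps $(z_\al)$ fixed and varies $q_\ell(s)$ inside the square roots, pathwise differentiability in $s$ can fail where an increment $q_\ell(s)-q_{\ell-1}(s)$ is only positive semidefinite (which happens after the common refinement introduces repetitions); since $\td F_N(t,\mu_s)$ depends on the field only through its law, you should either invoke the covariance-level version of the interpolation lemma or realize the path as $\sqrt{1-s}\,H_N^{\mu}+\sqrt{s}\,H_N^{\nu}$ with independent $z$-families, which makes the $s$-derivative legitimate; this is a cosmetic fix, not a gap. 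Likewise, your parenthetical reason for representation invariance (``the inserted weights sum to one'') is not the right statement --- what is needed is exactly the distributional marginalization property of cascades that you correctly name at the end, and that the paper cites.
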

Recall that throughout the paper, whenever $a = (a_{d,d'})_{1 \le d,d' \le D}$ and $b = (b_{d,d'})_{1 \le d,d' \le D}$ are two $D$-by-$D$ matrices with real entries, we use the notation
\begin{equation*}  
a \cdot b := \tr (a b^*) = \sum_{d,d' = 1}^D a_{d,d'} b_{d,d'}, \quad \text{ and } \quad |a|  := (a \cdot a)^\frac 1 2,
\end{equation*}
with $b^*$ denoting the transpose of $b$, and $\tr$ denoting the trace operator. 

For $\si \in \mcl H_N^D$, so far we have only made sense of the notation $\si^*$ within the notation ``$\tau \si^*$'', with $\si, \tau \in \mcl H_N^D$, in which case we recall that this is interpreted as the $D$-by-$D$ matrix
\begin{equation}  
\label{e.def.tausi*}
\tau \si^* := (\tau_d \cdot \si_{d'})_{1 \le d,d' \le D}.
\end{equation}
We now point out that one can make sense of the object $\si^*$ itself, in a way which is consistent with the notation in \eqref{e.def.tausi*}. To start with, in the case when $\mcl H_N = \R^N$, we can view $\si$ and $\tau$ as $D$-by-$N$ matrices; with this interpretation, we can define $\si^*$ to be the $N$-by-$D$ matrix conjugate to $\si$, and the notation $\tau \si^*$ is consistent with the matrix product. In the general case, we can identify $\si$ with the linear map
\begin{equation*}  
\Ll\{
\begin{array}{rcl}  
\mcl H_N  & \to & \R^D \\
\tau & \mapsto & (\si_1 \cdot \tau, \ldots, \si_D \cdot \tau).
\end{array}
\Rr.
\end{equation*}
This mapping admits a dual mapping from $\R^D$ to $\mcl H_N$, which we can denote by $\si^*$. Explicitly, the mapping $\si^*$ is given by
\begin{equation}  
\label{e.sigma*}
\Ll\{
\begin{array}{rcl}  
\R^D & \to & \mcl H_N  \\
v = (v_1,\ldots, v_D) & \mapsto & v_1 \si_1 + \cdots + v_D \si_D.
\end{array}
\Rr.
\end{equation}
The composition $\tau \si^*$ is then a linear map from $\R^D$ to itself, whose matrix representation in the canonical basis is indeed \eqref{e.def.tausi*}.

In order to prove Proposition~\ref{p.lip}, we will study the derivatives of the function $\td F_N$ with respect to the matrices $q_\ell$ appearing in \eqref{e.def.mu.de}. For any function $f = f(q) : U \to \R$ defined on an open subset $U$ of $S^D$, we write $\dr_q f(q) \in S^D$ to denote the unique symmetric matrix such that, for every $a \in S^D$,
\begin{equation*}  
\lim_{\ep \to 0} \ep^{-1} \Ll( f(q + \ep a) - f(q) \Rr) = \dr_q f(q) \cdot a,
\end{equation*}
provided that the limit exists for every $a \in S^D$. In view of the definition of $F_N$, we will need to differentiate the matrix square root operator; we thus recall some of its properties. We denote by $D_{\sqrt{2q}}$ the differential of the mapping 
\begin{equation*}  
\Ll\{
\begin{array}{rcl}  
S^D_{++} & \to & S^D \\
q  & \mapsto & \sqrt{2q}. 
\end{array}
\Rr.
\end{equation*}
That is, for every $q \in S^D_{++}$ and $a \in S^D$,
\begin{equation*}  
D_{\sqrt{2q}}(a) = \lim_{\ep \to 0} \ep^{-1} \Ll( \sqrt{2(q+\ep a)} - \sqrt{2q} \Rr) .
\end{equation*}
As $\ep \to 0$, we have
\begin{multline*}  
2q+2\ep a = \Ll(\sqrt{2q+2\ep a}\Rr)^2 = \Ll( \sqrt{2q} + \ep D_{\sqrt{2q}}(a) + o(\ep) \Rr)^2
\\
 = 2q + \ep \Ll( \sqrt{2q} D_{\sqrt{2q}}(a) + D_{\sqrt{2q}}(a) \sqrt{2q}  \Rr) + o(\ep), 
\end{multline*}
and thus
\begin{equation}
\label{e.charact.sqrt}
\sqrt{2q} D_{\sqrt{2q}}(a) + D_{\sqrt{2q}}(a) \sqrt{2q}  = 2a.
\end{equation}
This property characterizes the matrix $D_{\sqrt{2q}}(a)$ among symmetric matrices, see for instance \cite[Lemma~3.1]{HJrank}. In particular, if $b \in S^D$, then we can use the symmetry of $b$ and the fact that the transpose of $\sqrt{2q} D_{\sqrt{2q}}(a)$ is $D_{\sqrt{2q}}(a)\sqrt{2q}$ to get that
\begin{equation}  
\label{e.sqrt.dot.sym}
\Ll(\sqrt{2q} D_{\sqrt{2q}}(a)\Rr) \cdot b = \Ll(D_{\sqrt{2q}}(a) \sqrt{2q} \Rr)  \cdot b = a \cdot b.
\end{equation}

\begin{proof}[Proof of Proposition~\ref{p.lip}]
We decompose the proof into two steps.

\emph{Step 1.} In this first step, we show that for every $k \in \{0,\ldots, K\}$, 
\begin{equation}  
\label{e.expr.drqell}
\dr_{q_k} \td F_N = \frac 1 N \td \E \la \1_{\{\al \wedge \al' = k\}} \, \si \si'^* \ra   .
\end{equation}
For every $k \in \{0,\ldots, K-1\}$ and $a \in S^D$, we have
\begin{equation*}  
a \cdot \dr_{q_k} F_N= -\frac 1 N\la D_{\sqrt{2q_k - 2q_{k-1}}}(a)  z_{\al_{|k}} \cdot \si - D_{\sqrt{2q_{k+1} - 2q_{k}}}(a) z_{\al_{|k+1}} \cdot \si \ra.
\end{equation*}
Notice also that, for every $\si,\si' \in \mcl H_N^D$ and $\al,\al' \in \N^K$, we have
\begin{align*}  
& \td \E \Ll[ \Ll(D_{\sqrt{2q_k - 2q_{k-1}}}(a)  z_{\al_{|k}} \cdot \si \Rr) H_N^\mu(\si',\al') \Rr] 
\\
& \qquad
= \td \E \Ll[ \Ll(z_{\al_{|k}} \cdot D_{\sqrt{2q_k - 2q_{k-1}}}(a) \si \Rr) \Ll( (2q_k - 2q_{k-1})^\frac 1 2 z_{\al'_{|k}} \cdot \si' \Rr) \Rr] 
\\
& \qquad 
= \1_{\{\al \wedge \al' \ge k\}} \, D_{\sqrt{2q_k - 2q_{k-1}}}(a) \si \cdot (2q_k - 2q_{k-1})^\frac 1 2 \si' .
\end{align*}
By Gaussian integration by parts (see for instance \cite[Lemma~A.1]{bipartite}), we deduce that
\begin{multline*}  
\td \E \la D_{\sqrt{2q_k - 2q_{k-1}}}(a)  z_{\al_{|k}} \cdot \si \ra = 
\td \E  \la D_{\sqrt{2q_k - 2q_{k-1}}}(a)(2q_k - 2q_{k-1})^\frac 1 2 \cdot \si \si^* \ra
\\
- \td \E \la \1_{\{\al \wedge \al' \ge k\}} \, D_{\sqrt{2q_k - 2q_{k-1}}}(a)(2q_k - 2q_{k-1})^\frac 1 2 \cdot \si \si'^* \ra .
\end{multline*}
Since the matrix $\si \si^*$ is symmetric, we have by \eqref{e.sqrt.dot.sym} that 
\begin{equation*}  
\td \E  \la D_{\sqrt{2q_k - 2q_{k-1}}}(a)(2q_k - 2q_{k-1})^\frac 1 2 \cdot \si \si^* \ra = \td \E  \la a \cdot \si \si^* \ra .
\end{equation*}
We now observe that the matrix $\la \1_{\{\al \wedge \al' \ge k\}} \si \si'^* \ra$ is also symmetric. Indeed,
\begin{align}  
\notag
\la \1_{\{\al \wedge \al' \ge k\}} \si \si'^* \ra
&  = \sum_{\beta \in \N^k} \la \1_{\{\al_{|k} = \beta \}} \1_{\{\al'_{|k} = \beta\}} \si \si'^* \ra
\\
\label{e.sym.1.sisi}
& = \sum_{\beta \in \N^k} \la \1_{\{\al_{|k} = \beta \}} \si \ra \la \1_{\{\al_{|k} = \beta \}} \si \ra^*,
\end{align}
and therefore,
\begin{equation*}  
\td \E \la D_{\sqrt{2q_k - 2q_{k-1}}}(a)  z_{\al_{|k}} \cdot \si \ra  = \td \E \la a \cdot \si \si ^* -  \1_{\{\al \wedge \al' \ge k\}} a \cdot \si \si'^* \ra .
\end{equation*}
Using this also with $k$ replaced by $k + 1$ yields that
\begin{equation*}  
a \cdot \dr_{q_{k}} \td F_N = \frac 1 N \td \E  \la  \1_{\{\al \wedge \al' \ge k\}}  a \cdot \si \si'^* - \1_{\{\al \wedge \al' \ge k+1\}} a \cdot \si \si'^*\ra  .
\end{equation*}
Recalling from \eqref{e.sym.1.sisi} that the matrix $\1_{\{\al \wedge \al' = k\}} \si \si'^*$ is symmetric, we obtain \eqref{e.expr.drqell} for every $k < K$. The case $k = K$ is similar. This shows in particular that the function $\td F_N$ is uniformly Lipschitz continuous in the variables $(q_k)_{0 \le k \le K}$. More precisely, since
\begin{align*}  
|\si \si'^*|^2 
 = \sum_{d,d' = 1}^D (\si_d \cdot \si'_{d'})^2
 \le \sum_{d,d' = 1}^D |\si_d|^2 \, |\si'_{d'}|^2
 = |\si|^2 \, |\si'|^2,
\end{align*}
and recalling the assumption \eqref{e.ass.PN} on the support of $P_N$, and also \eqref{e.overlap.pdc} and the sentence below it, we obtain that, for every $k \in \{0,\ldots, K\}$,
\begin{equation}  
\label{e.bound.drq}
|\dr_{q_k} \td F_N| \le  \td \E \la\1_{\{\al \wedge \al' = k\}} \ra = \zeta_{k + 1} - \zeta_k.
\end{equation}
\emph{Step 2.} We complete the proof. First, we observe that we can allow for repetitions in the parameters $(q_k)_{0 \le k \le K}$ appearing in \eqref{e.def.mu.de}. That is, our definition of $\td F_N$ would also make sense if we were to drop the requirement of \eqref{e.strict.q} and replace it instead by simply
\begin{equation}  
\label{e.wide.q}
0 = q_{-1} \le q_0 \le q_1 \le \cdots \le q_{K-1} \le q_K.
\end{equation}
We could also look for a representation of the measure $\mu$ in which the parameters $(q_k)_{0 \le k \le K}$ are not repeated, and this would formally lead to a different definition of $\td F_N$. The point we are making here is that these two quantities are indeed the same, as the notation $\td F_N(t,\mu)$ suggests. The argument to justify this is classical, see for instance Step 1 of the proof of \cite[Proposition~2.1]{parisi}. (In essence, the point is to observe that a Poisson-Dirichlet cascade of depth $K+1$ with nodes on level $k$ deleted has the same law as a Poisson-Dirichlet cascade of depth $K$ and in which the weight $\zeta_k$ is deleted.)

In view of this, given two measures $\mu, \nu \in \mcl P^\upa(S^D_+)$ with finite support, we can fix parameters $(\zeta_{k})_{1 \le k \le K}$ satisfying \eqref{e.strict.zeta}, and $q_0,\ldots, q_K,q'_0,\ldots, q'_K \in S^D_+$ satisfying \eqref{e.wide.q} and
\begin{equation}  
\label{e.wide.q'}
0 = q'_{-1} \le q'_0 \le q'_1 \le \cdots \le q'_{K-1} \le q'_K,
\end{equation}
in such a way that $\mu$ satisfies \eqref{e.def.mu.de}, while 
\begin{equation}  
\label{e.def.nu.de}
\nu = \sum_{k = 0}^K (\zeta_{k+1} - \zeta_k) \de_{q'_k}.
\end{equation}
In view of \eqref{e.bound.drq}, we obtain that
\begin{equation*}  
|\td F_N(t,\mu) - \td F_N(t,\nu)| \le \sum_{k = 0}^K (\zeta_{k+1} - \zeta_k) |q_k' - q_k|. 
\end{equation*}
This is \eqref{e.lip}.
\end{proof}
\begin{definition}  
\label{def.cascade.transf}
We call the mapping 
\begin{equation*}  
\Ll\{
\begin{array}{rcl}  
\mcl P^\upa_1(S^D_+) & \to & \R \\
\mu & \mapsto & \bar F_N(0,\mu)
\end{array}
\Rr.
\end{equation*}
the \emph{cascade transform} of the measure $P_N$. We say that the cascade transform of the measure $P_N$ converges to the function $\psi : \mcl P^\uparrow_2(S^D_+) \to \R$ as $N$ tends to infinity if, for every $\mu \in \mcl P^\uparrow_2(S^D_+)$, we have
\begin{equation*}  
\lim_{N \to \infty} \bar F_N(0,\mu) = \psi(\mu).
\end{equation*}
\end{definition}
\begin{remark}  
An alternative interpretation of the mapping  $\mu \mapsto \td F_N(t,\mu)$ is that it is the cascade transform of the measure
\begin{equation*}  
\exp \Ll( \sqrt{2t} H_N(\si) - Nt\xi\Ll(\frac{\si \si^*}{N}\Rr)  \Rr)\, \d P_N(\si).
\end{equation*}
(Normalizing this into a probability measure would only affect $\td F_N(t,\mu)$ by an additive constant.)
\end{remark}
The main focus of the paper is to prove the following result, which implies Theorem~\ref{t.main1}.
\begin{theorem}
\label{t.main}
Let $\bar \xi$ be a regularization of $\xi$, and assume that the cascade transform of the measure $P_N$ converges to the function $\psi : \mcl P^\uparrow_2(S^D_+) \to \R$ as $N$ tends to infinity. For every $t \ge 0$ and $\mu \in \mcl P_2^\upa(S^D_+)$, we have
\begin{equation*}  
\liminf_{N \to \infty} \bar F_N(t,\mu) \ge f(t,\mu),
\end{equation*}
where $f : \R_+ \times \mcl P_2^\uparrow(S^D_+) \to \R$ is the solution to \eqref{e.hj}. 
\end{theorem}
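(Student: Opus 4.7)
I will prove Theorem~\ref{t.main} by showing that $\bar F_N$ is an asymptotic solution of the Hamilton--Jacobi equation \eqref{e.hj}, with error vanishing as $N\to\infty$, and then invoking a comparison principle for \eqref{e.hj} on $\R_+\times\mcl P_2^\upa(S^D_+)$ in the finite-dimensional formulation announced in Section~\ref{s.visc}. The initial condition is handled directly by the cascade-transform hypothesis, which provides $\bar F_N(0,\mu)\to\psi(\mu)=f(0,\mu)$, while the interior analysis reduces to identifying $\dr_t\bar F_N$ and $\dr_\mu\bar F_N$ and controlling their relationship through overlap concentration.

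\textbf{Deriving the approximate equation.} Differentiating \eqref{e.def.FN} in $t$ and performing Gaussian integration by parts on $H_N$ using its covariance \eqref{e.def.cov} yields
\[
\dr_t\bar F_N(t,\mu)=\E\la \xi(\si\si'^*/N) \ra .
\]
Combining \eqref{e.expr.drqell} with the overlap identity \eqref{e.overlap.pdc} gives, for $\mu$ of the form \eqref{e.def.mu.de},
\[
\int \bar\xi(\dr_\mu \bar F_N)\,\d\mu = \sum_{k=0}^K \E\la \1_{\{\al\wedge\al'=k\}}\ra\, \bar\xi(\dr_{q_k}\bar F_N).
\]
Since $\bar\xi$ and $\xi$ agree on PSD matrices with entries in $[-1,1]$, and \eqref{e.ass.PN} forces $\si\si'^*/N$ to lie in this range, matching the two identities reduces to showing that, on the event $\{\al\wedge\al'=k\}$, the matrix $\si\si'^*/N$ concentrates asymptotically at the deterministic PSD matrix $\dr_{q_k}\bar F_N$.

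\textbf{The main obstacle: symmetrization and synchronization.} Concentration of $\si\si'^*/N$ around $\dr_{q_k}\bar F_N$ has two distinct components. First, the overlap $\si\si'^*$ is not symmetric in general, whereas $\dr_{q_k}\bar F_N$ is, by \eqref{e.sym.1.sisi}; I therefore need a \emph{symmetrization} argument showing that the antisymmetric part of $\si\si'^*/N$ is asymptotically negligible in Gibbs expectation. The plan is to perturb $F_N$ by an auxiliary Gaussian field probing antisymmetric combinations and to argue, via Gaussian concentration and differentiation in the perturbation parameter, that the antisymmetric overlap vanishes in the limit---possible precisely because the covariance in \eqref{e.def.cov} couples only symmetric combinations of $\si$ and $\tau$. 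Second, the symmetric part must be \emph{synchronized}, i.e., forced to become a deterministic monotone function of $\al\wedge\al'$ alone; for this I will invoke the machinery of \cite{pan.multi,pan.potts,pan.vec}, which combines ultrametricity of the limiting Gibbs measure \cite{pan.aom} with the Ghirlanda--Guerra identities, now applied to matrix-valued overlaps and exploiting the parametrization of $\mcl P^\upa(S^D_+)$ by monotone maps from Proposition~\ref{p.uparrow.bij}. Combined with Gaussian concentration of $F_N$ around $\bar F_N$, these two ingredients produce an approximate identity $\dr_t\bar F_N\simeq\int\bar\xi(\dr_\mu\bar F_N)\,\d\mu$ up to $o(1)$ error.

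\textbf{Comparison and conclusion.} Finally, I will compare $\bar F_N$ with the solution $f$ of \eqref{e.hj} using the finite-dimensional viscosity framework of Section~\ref{s.visc}: restrict to measures supported on at most $K$ Dirac masses, and exploit that $\bar\xi$, being proper and uniformly Lipschitz, induces a nonlinearity $\int \bar\xi(\dr_\mu\cdot)\,\d\mu$ monotone in the $S^D_+$ order on gradients, which is the key structural fact driving the comparison principle. Combined with the convergence of initial data, this produces $\liminf_N\bar F_N(t,\mu)\ge f(t,\mu)$ on measures of finite support, after which the Lipschitz estimate of Proposition~\ref{p.lip} extends the bound to all of $\mcl P_2^\upa(S^D_+)$. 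The hardest step will undoubtedly be the symmetrization-synchronization argument of the previous paragraph: the matrix-valued setting requires both controlling the antisymmetric degrees of freedom (absent from the scalar and vector-valued cases of earlier works) and carrying out the synchronization in the richer geometry of $\mcl P^\upa(S^D_+)$ developed in Section~\ref{s.monotone}.
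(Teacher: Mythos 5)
Your outline matches the paper's architecture at the level of headlines (approximate Hamilton--Jacobi identity \eqref{e.almost.equation}, symmetrization plus synchronization via Ghirlanda--Guerra identities and ultrametricity, finite-dimensional viscosity framework, comparison, cascade-transform initial data, Lipschitz extension), but there is a genuine gap at the most delicate step. You claim that perturbing the Hamiltonian and using Gaussian concentration ``produce an approximate identity $\dr_t\bar F_N\simeq\int\bar\xi(\dr_\mu\bar F_N)\,\d\mu$ up to $o(1)$ error,'' and you then feed this into the comparison principle. This is exactly what cannot be done: the Ghirlanda--Guerra identities (and hence synchronization) are not available at a \emph{given} choice of $(t,\mu)$; perturbation arguments only yield them in an averaged sense over the perturbation parameters, and the paper stresses that even an error small in $L^1$ in the space variable is not enough to force convergence to the viscosity solution. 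The paper's actual mechanism is a selection principle internal to the viscosity formulation: one works with the finite-dimensional restrictions $\bar F_N^{(K)}$, augments the energy with the perturbative fields $H_N^x$ of \eqref{e.def.HNx}, tilts the test function $\phi$ into $\td\phi$, and locates a contact point $(t_N,q_N,x_N)$ of $\bar F_N-\td\phi$; the concavity of $\bar F_N$ in $x$ together with the local-minimum structure gives the Hessian and gradient controls \eqref{e.hessian.control}--\eqref{e.grad.concentr}, which ``activate'' the perturbations and yield the approximate Ghirlanda--Guerra identities \eqref{e.gg.delta} \emph{only at that contact point}. Proposition~\ref{p.synchr} then gives symmetrization and synchronization there, which is enough to verify the supersolution inequality \eqref{e.wewant} up to $C/K$; comparison with $f^{(K)}$ and Proposition~\ref{p.conv.finite.dim} finish the proof. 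Without this contact-point mechanism, your plan collapses to the naive route the paper explicitly rules out.

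A second omission: you do not address the Neumann boundary condition on $\dr U_K$, which is a genuinely new difficulty here compared with the bipartite case. For the supersolution property at boundary points one cannot guarantee $\nabla\phi\in\bar U_K$ at a contact point (the paper exhibits an explicit $D=2$, $K=1$ counterexample), which is why the nonlinearity is redefined outside $\bar U_K$ by the infimum formula \eqref{e.extend.H}; the supersolution argument then uses that $(\nabla_q\bar F_N-\nabla_q\td\phi)(t_N,q_N,x_N)\in\bar U_K^*$ to get $\H_K(\nabla_q\bar F_N)\ge\H_K(\nabla_q\td\phi)$, and one must also check that the extended $\H_K$ remains uniformly Lipschitz and consistent across $K$ so that Proposition~\ref{p.conv.finite.dim} applies. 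Your remark that properness of $\bar\xi$ ``drives the comparison principle'' does not substitute for this: the comparison principle holds for general Lipschitz nonlinearities, and properness is instead used to make $\H_K$ tilted on $\bar U_K$ (so that the free energy, whose $q$-gradient lies in $\bar U_K$ by Proposition~\ref{p.basic.ineq}, interacts correctly with the extension). These two points --- the contact-point selection of the Ghirlanda--Guerra identities and the boundary treatment via the modified nonlinearity --- are the core of the proof and are missing from your proposal.
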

\begin{remark}  
\label{r.barxi}
Under the assumptions of Theorem~\ref{t.main}, I expect that $\bar F_N$ actually converges to $f$, and in particular, I expect that the solution to \eqref{e.hj} does not depend on the choice of~$\bar \xi$. To prove the latter point directly, the crucial missing ingredient is an extension of the estimate~\eqref{e.lip.hj.finite}, which ensures that a Lipschitz property of the initial condition is propagated to the whole solution. This estimate measures the norm of the gradient of the solution in an $\ell^2$ norm. What is missing is a version of this result in which this $\ell^2$ norm is replaced by an $\ell^\infty$ norm. 
\end{remark}
We now explain heuristically why one may expect the free energy $\bar F_N$ to converge to the function $f$ solution to \eqref{e.hj}. For starters, we clarify our definition of the transport derivative $\dr_\mu$. Informally, for any sufficiently smooth function $g : \mcl P^\uparrow_2(S^D_+) \to \R$ and $\mu \in \mcl P^\upa_2(S^D_+)$, we want to define a function $\dr_\mu g(\mu,\cdot) \in L^2(S^D_+,\mu)$ such that, as $\nu$ tends to~$\mu$ in $\mcl P_2(S^D_+)$,
\begin{equation*}  
g(\nu) = g(\mu) + \E \Ll[ \dr_\mu g(\mu,X_\mu)(X_{\nu} - X_\mu) \Rr]  + o \Ll( \E \Ll[ (X_{\nu} - X_\mu)^2 \Rr] ^\frac 1 2  \Rr) .
\end{equation*}
In practice, we will always work with measures of finite support, in which case we can rely on the following explicit definition. For any measure $\mu$ of the form \eqref{e.def.mu.de}-\eqref{e.strict.q}, and $k \in \{0,\ldots, K\}$, we set
\begin{equation}
\label{e.def.dmu}
\dr_\mu g (\mu,q_k) := (\zeta_{k+1} - \zeta_k)^{-1} \dr_{q_k} g(\mu),
\end{equation}
where on the right side of this identity, we interpret $\dr_{q_k} g(\mu)$ as the derivative with respect to $q_k$ of the function
\begin{equation*}  
(q_0,\ldots, q_K) \mapsto g \Ll( \sum_{\ell = 0}^K (\zeta_{\ell+1} - \zeta_{\ell}) \de_{q_{\ell}} \Rr) .
\end{equation*}
Coming back to the heuristic derivation of the equation \eqref{e.hj}, we first notice that
\begin{equation*}  
\dr_t F_N = -\frac 1 N \la \frac{1}{\sqrt{2t}} H_N(\si) - N \xi \Ll( \frac{\si \si^*}{N} \Rr)\ra ,
\end{equation*}
and thus, by Gaussian integration by parts (see for instance \cite[Lemma~A.1]{bipartite}),
\begin{equation}  
\label{e.expr.drt}
\dr_t \bar F_N = \E \la \xi \Ll( \frac{\si \si'^*}{N} \Rr) \ra. 
\end{equation}
On the other hand, it follows from \eqref{e.expr.drqell} that, for measures $\mu$ of the form given in \eqref{e.def.mu.de} and every $k \in \{0,\ldots, K\}$,
\begin{equation*}  
\dr_{q_k} \bar F_N = \frac 1 N \E \la \1_{\{\al \wedge \al' = k\}} \, \si \si'^* \ra.
\end{equation*}
Recalling also \eqref{e.overlap.pdc}, we can write
\begin{equation}  
\label{e.drqk.normalized}
(\zeta_{k+1} - \zeta_k)^{-1} \dr_{q_k} \bar F_N = \frac 1 N \E \la \si \si'^* \vb \al \wedge \al' = k \ra,
\end{equation}
where the conditional expectation is understood with respect to the measure $\E \la \cdot \ra$. Using \eqref{e.overlap.pdc} once more, we thus have
\begin{align*}  
\int \xi(\dr_\mu \bar F_N) \, \d \mu = \sum_{k = 0}^K (\zeta_{k+1} - \zeta_k) \xi \Ll(  \E \la \frac{\si \si'^*}{N} \vb \al \wedge \al' = k \ra \Rr)  = \E \la   \xi\Ll(\E \la \frac{\si \si'^*}{N} \vb \al \wedge \al' \ra \Rr)  \ra .
\end{align*}
Summarizing, we have shown that, at least for measures $\mu$ that are sums of Dirac masses,
\begin{equation}  
\label{e.almost.equation}
\dr_t \bar F_N - \int \xi(\dr_\mu \bar F_N) \, \d \mu = \E \la \xi \Ll( \frac{\si \si'^*}{N} \Rr) \ra - \E \la   \xi\Ll(\E \la \frac{\si \si'^*}{N} \vb \al \wedge \al' \ra \Rr)  \ra .
\end{equation}
We may call the matrix $\frac{\si \si'^*}{N}$ the $\si$-overlap, and $\al \wedge \al'$ the $\al$-overlap. The right side of~\eqref{e.almost.equation} is small if and only if the law of the $\si$-overlap given the $\al$-overlap is concentrated. That is, we need to assert a synchronization property, in the sense that the $\al$-overlap should essentially determine the $\si$-overlap. 

It would of course be ideal if one could show that the right side of \eqref{e.almost.equation} becomes small as $N$ becomes large, for any choice of the parameters $t$ and $\mu$. However, in all likelihood, the right-hand side of \eqref{e.almost.equation} will only be small for \emph{most} choices of the parameters. Indeed, even when studying much simpler situations such as that in which we add a small viscosity term to a simple Hamilton-Jacobi equation, the ``error term'' will typically not be uniformly small. The essence of the problem investigated in this paper revolves around such a difficulty: if we could assert that the right side of~\eqref{e.almost.equation} is small uniformly over the parameters, then we would immediately obtain the convergence of $\bar F_N$ to $f$ as a consequence of the comparison principle. However, this is too much to ask for. On the other hand, one can construct examples (even in $1+1$ dimensions) for which the ``error term'' in the equation is small in $L^1$ (in the ``space'' variable), and yet which do not converge to the viscosity solution of the equation without error terms. Hence, in order to conclude for the convergence of $\bar F_N$ to $f$, we need to identify a rather delicate mechanism by which the viscosity solution to the limit equation is selected. As far as the lower bound is concerned (that is, the content of Theorem~\ref{t.main}), this ``selection principle'' will rest on some concavity property of the function $\bar F_N$ with respect to additional perturbative parameters that are yet to be introduced. We refer to the discussion below \eqref{e.super} for more on this.

We conclude this section with some basic inequalities on the derivatives of $\td F_N$, which will be used as boundary conditions for the equation \eqref{e.hj}. 
\begin{proposition}
[Basic inequalities on $\dr_{q_k} \td F$]
\label{p.basic.ineq}
For $\mu$ a measure of the form \eqref{e.def.mu.de}, we have, for every $k \le \ell \in \{0,\ldots, K\}$,
\begin{equation}
\label{e.monotone}
\dr_{q_k} \td F_N \ge 0,
\end{equation}
as well as
\begin{equation}  
\label{e.monotone.cone}
(\zeta_{k+1} - \zeta_k)^{-1} \dr_{q_k} \td F_N \le (\zeta_{\ell+1} - \zeta_{\ell})^{-1} \dr_{q_{\ell}} \td F_N .
\end{equation}
\end{proposition}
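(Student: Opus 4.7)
The plan is to derive both inequalities from formula~\eqref{e.expr.drqell}, namely $\dr_{q_k} \td F_N = \frac{1}{N} \td \E \la \1_{\{\al \wedge \al' = k\}} \si \si'^* \ra$. Combined with~\eqref{e.overlap.pdc}, this recasts the claim as a statement about the conditional matrix $\td \E \la \si \si'^* \vb \al \wedge \al' = k \ra$, where the conditional expectation is taken under $\td \E \la \cdot \ra$.

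For the inequality in~\eqref{e.monotone}, the case $k = K$ is immediate: since $\al \wedge \al' = K$ coincides with $\al = \al'$, applying~\eqref{e.sym.1.sisi} with its index equal to $K$ gives $\dr_{q_K} \td F_N = \frac{1}{N} \td \E \sum_{\al \in \N^K} \la \1_{\{\al_{|K} = \al\}} \si \ra \la \1_{\{\al_{|K} = \al\}} \si \ra^* \in S^D_+$. For $k < K$, I would use the decomposition $\1_{\{\al \wedge \al' = k\}} = \1_{\{\al_{|k} = \al'_{|k}\}} - \1_{\{\al_{|k+1} = \al'_{|k+1}\}}$ together with~\eqref{e.sym.1.sisi} applied at both levels to rewrite
\begin{equation*}
\dr_{q_k} \td F_N = \frac{1}{N} \td \E \sum_{\beta \in \N^k} \sum_{j \neq j' \in \N} Z_{(\beta,j)} Z_{(\beta,j')}^*, \quad Z_\gamma := \la \1_{\{\al_{|k+1} = \gamma\}} \si \ra \in \mcl H_N^D.
\end{equation*}
Positive semidefiniteness of this expectation then follows from Ruelle probability cascade invariance: conditionally on the ``past'' (the underlying field $H_N$, the Gaussians $z_{\al_{|l}}$ for $l \le k$, and the cascade structure above level $k$), the Gibbs-reweighted children at level $k+1$ are exchangeable, and two distinct sampled children $j \neq j'$ produce conditionally iid marks, so that $\td \E[Z_{(\beta,j)} Z_{(\beta,j')}^* \vb \mathrm{past}]$ factorizes as $M_\beta M_\beta^* \in S^D_+$ for some $M_\beta \in \mcl H_N^D$. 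Summing over $\beta$ and $j \neq j'$ preserves this positivity.

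For~\eqref{e.monotone.cone}, iteration reduces the claim to $\ell = k+1$, so it suffices to show $\td \E \la \si \si'^* \vb \al \wedge \al' = k \ra \le \td \E \la \si \si'^* \vb \al \wedge \al' = k+1 \ra$ in the PSD sense. Using the same Ruelle-invariance-based factorization, each side is of the form $\td \E[M_l M_l^*]$ for an $\mcl H_N^D$-valued conditional mean $M_l$ of $\si$ associated with level $l+1$. These means form a martingale in $l$ (the associated $\sigma$-algebras being increasing in $l$), so martingale orthogonality yields
\begin{equation*}
\td \E[M_{k+1} M_{k+1}^*] - \td \E[M_k M_k^*] = \td \E\!\left[ (M_{k+1} - M_k)(M_{k+1} - M_k)^* \right] \in S^D_+.
\end{equation*}

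The main technical obstacle is to make the ``conditional mean'' factorization $\td \E \la \si \si'^* \vb \al \wedge \al' = k \ra = \td \E[M_k M_k^*]$ rigorous, with $M_k$ satisfying the martingale property. This combines the independence of the replicas under $\la \cdot \ra \otimes \la \cdot \ra$ with the Bolthausen--Sznitman invariance of the Poisson--Dirichlet cascade, in the spirit of the framework of~\cite{pan}, here adapted to the matrix-valued setting.
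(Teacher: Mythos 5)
Your reduction via \eqref{e.expr.drqell} is legitimate (that formula is established in the paper, in the proof of the Lipschitz estimate), and your martingale-orthogonality step for \eqref{e.monotone.cone} is essentially the paper's argument in disguise: under the measure obtained by tilting the Gaussian law of $(y_0,\ldots,y_K)$ by the density $D_{1K}$ of the recursive cascade representation, the quantities $M_k = \E_{y_{\ge k+1}}\Ll[\la \si \ra D_{k+1,K}\Rr]$ are indeed a martingale, and orthogonality is equivalent to the Jensen inequality the paper uses. The $k=K$ case of \eqref{e.monotone} via \eqref{e.sym.1.sisi} is also fine.

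The genuine gap is the factorization you flag but do not prove, and the heuristic you offer for it is incorrect as stated. Conditionally on your ``past'' (the field $H_N$, the Gaussians attached to vertices of level $\le k$, and the cascade structure above level $k$), the variables $Z_{(\beta,j)} = \la \1_{\{\al_{|k+1}=(\beta,j)\}} \si \ra$ for distinct children $j\neq j'$ are \emph{not} conditionally independent: each is a ratio whose denominator is the full Gibbs normalization $\exp(N F_N)$, which couples all branches of the tree, so $\td\E\Ll[Z_{(\beta,j)}Z_{(\beta,j')}^*\,\big\vert\,\mathrm{past}\Rr]$ does not factor as $M_\beta M_\beta^*$ by exchangeability alone. (Relatedly, for $k<K$ the fixed-realization matrix $\la \1_{\{\al\wedge\al'=k\}}\si\si'^*\ra$ is in general not positive semidefinite, so some nontrivial averaging over the cascade is indispensable.) The correct statement only emerges after averaging over the cascade and the branch Gaussians, where the Bolthausen--Sznitman/RPC invariance converts the shared denominator into the tilted densities $D_{k,\ell}$; carrying this out is precisely the content of Step 1 of the paper's proof, which works with the recursive quantities $X_K,\ldots,X_{-1}$ and Gaussian integration by parts to arrive at the representation \eqref{e.expr.drql}, namely $\dr_{q_k}\td F_N = \frac{\zeta_{k+1}-\zeta_k}{N}\,\E_y\Ll[\Ll(\E_{y_{\ge k+1}}\Ll[\la\si\ra D_{k+1,K}\Rr]\Rr)\Ll(\E_{y_{\ge k+1}}\Ll[\la\si\ra D_{k+1,K}\Rr]\Rr)^* D_{1k}\Rr]$. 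Once that formula is available, positivity is immediate and your martingale step closes \eqref{e.monotone.cone}; without it, both halves of your argument rest on an unproven (and, in the naive conditional form, false) independence claim, so the core of the proposition is still missing.
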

Recall that the inequalities in \eqref{e.monotone} and \eqref{e.monotone.cone} are interpreted in the sense of the partial order on $S^D$. For instance, an equivalent formulation of \eqref{e.monotone} is to say that the symmetric matrix $\dr_{q_k} \td F_N$ is positive semidefinite. 
\begin{proof}[Proof of Proposition~\ref{p.basic.ineq}]
We start by introducing notation.
For each $y_0,\ldots, y_K \in \mcl H_N^D$, we define
\begin{multline*}  
X_K(y_0,\ldots, y_K) :=  \log \int  \exp \bigg( \sqrt{2t} H_N(\si) - Nt\xi\Ll(\frac{\si \si^*}{N}\Rr) 
\\
+  \sum_{k = 0}^K \Ll( 2 q_k - 2 q_{k-1} \Rr) ^\frac 1 2 y_k \cdot \si - \si\cdot q_K \si \bigg) \,  \d P_N(\si),
\end{multline*}
We then define recursively, for each $k \in \{1,\ldots, K\}$,
\begin{equation*}  
X_{k-1}(y_0,\ldots,y_{k-1}) := \zeta_k^{-1} \log \E_{y_k} \exp \Ll( \zeta_k X_k(y_0,\ldots, y_k) \Rr),
\end{equation*}
where we use the notation $\E_{y_k}$ to denote the integration of the random variable $y_k$ along the standard Gaussian measure on $\mcl H_N^D$,  see \eqref{e.standard.Gaussian}. We will also use below the notation $\E_{y \ge k}$ to denote the integration of the variables $y_k,\ldots, y_K$ along the standard Gaussian measure, and use the shorthand $\E_y$ for $\E_{y \ge 0}$.
By \cite[Proposition~2.2]{bipartite}, we have
\begin{equation}  
\label{e.tdF.is.X-1}
-N\td F_N(t,\mu) = X_{-1} := \E_{y_0}\Ll[ X_0(y_0) \Rr] .
\end{equation}
Within the current proof (and only here), we change the definition of the measure $\la \cdot \ra$, and set it to be such that, for every bounded measurable function $f : \mcl H_N^D \to \R$,
\begin{multline*}  
\la f(\si) \ra = \exp (-X_K) \int f(\si) \exp \bigg( \sqrt{2t} H_N(\si) - Nt\xi\Ll(\frac{\si \si^*}{N}\Rr) 
\\
+  \sum_{k = 0}^K \Ll( 2 q_k - 2 q_{k-1} \Rr) ^\frac 1 2 y_k \cdot \si - \si\cdot q_K \si \bigg)  \, \d P_N(\si).
\end{multline*}
Although this is implicit in the notation, the measure $\la \cdot \ra$ depends on the choice of the parameters $y_0,\ldots, y_K \in \mcl H_N^D$. (It also depends  on the realization of $(H_N(\si))_{\si \in \mcl H_N^D}$, which is kept fixed throughout the proof and could have been absorbed into the definition of the measure $P_N$.) For every $k \le \ell \in \{0,\ldots, K\}$, we write
\begin{equation*}  
D_{k,\ell} = D_{k\ell} := \frac{\exp \Ll( \zeta_k X_k + \cdots + \zeta_\ell X_\ell \Rr) }{\E_{y_k}\Ll[\exp(\zeta_k X_k)\Rr] \cdots \E_{y_\ell}\Ll[\exp(\zeta_\ell X_\ell)\Rr]}.
\end{equation*}
We decompose the rest of the proof into two steps.

\emph{Step 1.}
We show that, for every $k \in \{0,\ldots, K\}$,
\begin{equation}  
\label{e.expr.drql}
\dr_{q_k} \td F_N = \frac{\zeta_{k+1} - \zeta_k}{N} \E_y \Ll[ \Ll(\E_{y_{\ge k+1}} \Ll[ \la \si \ra D_{k+1,K} \Rr]\Rr) \Ll( \E_{y_{\ge k+1}} \Ll[ \la \si \ra D_{k+1,K} \Rr] \Rr)^* D_{1k}  \Rr] .
\end{equation}
Recall that the expression on the right side above is interpreted according to \eqref{e.def.tausi*}. As in Step 1 of the proof of \cite[Lemma~2.4]{parisi}, one can first show by decreasing induction on $k$ that, for every $k,\ell \in \{0,\ldots, K\}$,
\begin{equation}
\label{e.drq.X}
\dr_{q_\ell} X_{k-1} = \E_{y \ge k} \Ll[ (\dr_{q_\ell} X_k) D_{k K} \Rr] ,
\end{equation}
and similarly, for every $k,\ell \in \{0,\ldots, K\}$ with $k < \ell$,
\begin{equation}  
\label{e.dry.X}
\dr_{y_\ell} X_{k-1} = \E_{y \ge k} \Ll[ (\dr_{y_\ell} X_k) D_{k K} \Rr] ,
\end{equation}
while $\dr_{y_\ell} X_{k-1} = 0$ if $k \ge \ell$. Moreover, for every $\ell \in \{0,\ldots, K-1\}$ and $a \in S^D$, we have
\begin{equation*}  
a \cdot \dr_{q_\ell} X_K = \la D_{\sqrt{2 q_\ell - 2q_{\ell-1}}}(a) y_\ell \cdot \si - D_{\sqrt{2q_{\ell+1} - 2q_\ell}}(a) y_{\ell+1} \cdot \si \ra.
\end{equation*}
Since we are ultimately interested in $\dr_{q_\ell} X_{-1}$, and considering \eqref{e.drq.X}, we need to study
\begin{equation*}  
\E_y  \Ll[\la D_{\sqrt{2 q_\ell - 2q_{\ell-1}}}(a)   y_\ell \cdot\si \ra D_{1K}\Rr] .
\end{equation*}
(Notice that $D_{0K} = D_{1K}$ since $\zeta_0 = 0$.) 
By Gaussian integration by parts, see for instance \cite[(A.1)-(A.2)]{bipartite}, the quantity above can be rewritten as
\begin{multline}  
\label{e.decomp.der}
\E_y \Ll[\la D_{\sqrt{2 q_\ell - 2q_{\ell-1}}}(a) \si \cdot  (2q_\ell - q_{\ell-1})^\frac 1 2 (\si - \si') \ra D_{1K} \Rr] 
\\
+ \E_y  \Ll[\la D_{\sqrt{2 q_\ell - 2q_{\ell-1}}}(a) \si \ra \cdot \dr_{y_\ell} D_{1K}\Rr] ,
\end{multline}
where $\si'$ denotes an independent copy of the random variable $\si$ under $\la \cdot \ra$. Moreover,
\begin{equation*}  
D_{\sqrt{2 q_\ell - 2q_{\ell-1}}}(a) \si \cdot  (2q_\ell - q_{\ell-1})^\frac 1 2 \si = D_{\sqrt{2 q_\ell - 2q_{\ell-1}}}(a)  (2q_\ell - q_{\ell-1})^\frac 1 2 \cdot \si \si^*,
\end{equation*}
and since $\si \si^*$ is a symmetric matrix, we can use \eqref{e.sqrt.dot.sym} to obtain that
\begin{equation*}  
D_{\sqrt{2 q_\ell - 2q_{\ell-1}}}(a) \si \cdot  (2q_\ell - q_{\ell-1})^\frac 1 2 \si = a \cdot \si \si^*. 
\end{equation*}
Similarly, using that the matrix $\la \si \si'^*\ra = \la \si \ra \la \si \ra^*$ is symmetric, we have
\begin{equation*}  
\la D_{\sqrt{2 q_\ell - 2q_{\ell-1}}}(a) \si \cdot  (2q_\ell - q_{\ell-1})^\frac 1 2 \si'\ra = \la a \cdot \si \si'^*\ra. 
\end{equation*}
Combining these observations allows us to identify the first term in \eqref{e.decomp.der} as
\begin{equation*}  
\E_y \Ll[\la a \cdot \si \si^* - a \cdot \si \si'^* \ra D_{1K} \Rr].
\end{equation*}
Turning to the second term in \eqref{e.decomp.der}, we have
\begin{equation*}  
\dr_{y_{\ell}} D_{1K} =  \Ll(\sum_{k = \ell}^K \zeta_k \dr_{y_{\ell}} X_{k} 
     - \sum_{k = \ell+1}^K \zeta_{k} \frac{\E_{y_{k}} \Ll[\dr_{y_{\ell}} X_{k} \exp \Ll( \zeta_k X_k \Rr)   \Rr]}{\E_{y_k} \Ll[ \exp \Ll( \zeta_k X_k \Rr)  \Rr] }\Rr) D_{1K}.
\end{equation*}
Using \eqref{e.dry.X}, we see that, for every $k \ge \ell$,
\begin{equation*}  
\dr_{y_\ell} X_{k} = \E_{y_{\ge k+1}} \Ll[ \la (2q_\ell - 2q_{\ell-1})^\frac 1 2 \si \ra D_{k+1, K} \Rr] ,
\end{equation*}
with the understanding that $\E_{y \ge K+1}$ is the identity map, and $D_{K+1,K} = 1$. 
It thus follows that
\begin{equation*}  
\frac{\E_{y_{k}} \Ll[\dr_{y_{\ell}} X_{k} \exp \Ll( \zeta_k X_k \Rr)   \Rr]}{\E_{y_k} \Ll[ \exp \Ll( \zeta_k X_k \Rr)  \Rr] } = \E_{y \ge k} \Ll[ \la (2q_\ell - 2q_{\ell-1})^\frac 1 2 \si \ra D_{k K} \Rr] ,
\end{equation*}
and
\begin{align*}  
\dr_{y_{\ell}} D_{1K} 
& =  (2q_\ell - 2q_{\ell-1})^\frac 1 2 \Ll(\sum_{k = \ell}^K \zeta_k \E_{y_{\ge k+1}} \Ll[ \la \si \ra D_{k+1,K} \Rr]  - \sum_{k = \ell+1}^K \zeta_{k} \E_{y_{\ge k}} \Ll[ \la \si \ra D_{k K} \Rr] \Rr) D_{1K}
\\
& = (2q_\ell - 2q_{\ell-1})^\frac 1 2 \Ll(\la \si \ra - \sum_{k = \ell}^K  (\zeta_{k+1} - \zeta_k) \E_{y_{\ge k+1}} \Ll[ \la \si \ra D_{k+1,K} \Rr] \Rr) D_{1K}.
\end{align*}
Using again that $\si \si^*$ is symmetric, we deduce that the second term in \eqref{e.decomp.der} can be rewritten as
\begin{equation*}  
\E_y \Ll[ \la a \cdot \si \si^* \ra - \la D_{\sqrt{2q_\ell - 2q_{\ell-1}}}(a) \si \ra \cdot  \sum_{k = \ell}^K  (\zeta_{k+1} - \zeta_k)(2q_\ell - 2q_{\ell-1})^\frac 1 2  \E_{y_{\ge k+1}} \Ll[\la \si \ra D_{k+1,K} \Rr] D_{1K} \Rr] .
\end{equation*}
Using the decomposition $D_{1K} = D_{1k} D_{k+1,K}$ and the fact that $D_{1k}$ does not depend on $y_{k+1},\ldots, y_K$, we see that, for each $k \in \{\ell,\ldots, K\}$,
\begin{align*}  
& \E_y \Ll[D_{\sqrt{2q_\ell - 2q_{\ell-1}}}(a)\la  \si \ra \cdot  (2q_\ell - 2q_{\ell-1})^\frac 1 2  \E_{y_{\ge k+1}} \Ll[ \la \si \ra D_{k+1,K} \Rr] D_{1K} \Rr] 
\\
& \qquad = \E_y  \Ll[D_{\sqrt{2q_\ell - 2q_{\ell-1}}}(a)\E_{y_{\ge k+1}} \Ll[\la  \si \ra D_{k+1,K}  \Rr] \cdot  (2q_\ell - 2q_{\ell-1})^\frac 1 2  \E_{y_{\ge k+1}} \Ll[ \la \si \ra D_{k+1,K} \Rr] D_{1k} \Rr] 
\\
& \qquad = 
\E_y  \Ll[a \cdot \Ll(\E_{y_{\ge k+1}} \Ll[\la  \si \ra D_{k+1,k}  \Rr] \E_{y_{\ge k+1}} \Ll[\la  \si \ra D_{k+1,K}  \Rr]^* \Rr)D_{1k} \Rr] .
\end{align*}
Summarizing, we have shown that, for every $\ell \in \{0,\ldots, K-1\}$,
\begin{equation*}  
- \dr_{q_\ell} X_{-1} =  (\zeta_{\ell+1} - \zeta_\ell)\E_y  \Ll[\Ll(\E_{y_{\ge \ell+1}} \Ll[\la  \si \ra D_{\ell+1,K}  \Rr]\Rr)\Ll( \E_{y_{\ge \ell+1}} \Ll[\la  \si \ra D_{\ell+1,K}  \Rr]\Rr)^* D_{1\ell} \Rr] .
\end{equation*}
By \eqref{e.tdF.is.X-1}, this is \eqref{e.expr.drql}.

\emph{Step 2.} The fact that $\dr_{q_k} \td F_N$ is positive semidefinite is clear from \eqref{e.expr.drql}, as an average of matrices of the form $\tau \tau^*$, $\tau \in \mcl H_N^D$. Showing \eqref{e.monotone.cone} is equivalent to showing that, for every $k \le \ell \in \{0,\ldots,K\}$ and $v \in \R^D$, 
\begin{equation*}  
(\zeta_{k+1} - \zeta_k)^{-1} v\cdot \dr_{q_k} \td F_N v \le (\zeta_{\ell+1} - \zeta_\ell)^{-1} v\cdot \dr_{q_\ell} \td F_N v .
\end{equation*}
Recall that, for any $\tau \in \mcl H_N^D$, we can interpret $\tau^*$ as the mapping in \eqref{e.sigma*}. In view of~\eqref{e.expr.drql}, we need to show that
\begin{equation}
\label{e.monotone.cone.2}
\E_y \Ll[ \Ll| \E_{y_{\ge k+1}} \Ll[ \la \si \ra^* v D_{k+1,K} \Rr]\Rr|^2 D_{1k}  \Rr] \le \E_y \Ll[ \Ll| \E_{y_{\ge \ell+1}} \Ll[ \la \si \ra^* v D_{\ell+1,K} \Rr]\Rr|^2 D_{1\ell}  \Rr] .
\end{equation}
By Jensen's inequality, we have
\begin{align*}  
 \Ll| \E_{y_{\ge k+1}} \Ll[ \la \si \ra^*v D_{k+1,K} \Rr]\Rr|^2 
 & = \Ll|\E_{y_{k+1}, \ldots, y_\ell} \Ll[ \E_{y_{\ge \ell+1}} \Ll[ \la \si \ra^* v D_{\ell+1,K} \Rr] \, D_{k+1,\ell}  \Rr] \Rr|^2 
 \\
& \le \E_{y_{k+1}, \ldots, y_\ell} \Ll[ \Ll|\E_{y_{\ge \ell+1}} \Ll[ \la \si \ra^* v D_{\ell+1,K} \Rr]\Rr|^2 \, D_{k+1,\ell}  \Rr] ,
\end{align*}
and therefore
\begin{align*}  
\E_y \Ll[ \Ll| \E_{y_{\ge k+1}} \Ll[ \la \si \ra^* v D_{k+1,K} \Rr]\Rr|^2 D_{1k}  \Rr] 
& 
\le \E_y \Ll[ \E_{y_{k+1}, \ldots, y_\ell} \Ll[ \Ll|\E_{y_{\ge \ell+1}} \Ll[ \la \si \ra^* v D_{\ell+1,K} \Rr]\Rr|^2 \, D_{k+1,\ell}  \Rr] D_{1k}  \Rr]
\\
& = \E_y \Ll[  \Ll|\E_{y_{\ge \ell+1}} \Ll[ \la \si \ra^* v D_{\ell+1,K} \Rr]\Rr|^2 \, D_{1\ell}  \Rr],
\end{align*}
as desired.
\end{proof}
Up to a simple approximation procedure, the property \eqref{e.monotone} can be rephrased as a monotonicity property for the function $\mu \mapsto \td F_N(t,\mu)$. Recall the definition of $M_\mu$ from Proposition~\ref{p.uparrow.bij}. 
\begin{proposition}[Monotonicity of $\td F_N$]
\label{p.monotone}
The mapping $\mu \mapsto \td F_N(t,\mu)$ is increasing, in the sense that, for every $\mu,\nu \in \mcl P^\upa_1(S^D_+)$,
\begin{equation}  
\label{e.impl.monotone}
M_\mu \le M_\nu  \quad \implies \quad \td F_N(t,\mu) \le \td F_N(t,\nu). 
\end{equation}
\end{proposition}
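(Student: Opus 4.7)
The plan is to reduce the general statement to the case of finitely supported measures sharing a common family of weight parameters, and then integrate along a straight-line path using the positivity of $\dr_{q_k} \td F_N$ established in Proposition~\ref{p.basic.ineq}.

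First, given arbitrary $\mu,\nu \in \mcl P^\upa_1(S^D_+)$ with $M_\mu \le M_\nu$, I would approximate both sides by finitely supported monotone measures sharing a common partition. For each $n \ge 1$, set $u^n_k := k/(n+1)$ and define
\begin{equation*}
\mu_n := \sum_{k=0}^{n} (u^n_{k+1} - u^n_k)\, \de_{M_\mu(u^n_k)}, \qquad \nu_n := \sum_{k=0}^{n} (u^n_{k+1} - u^n_k)\, \de_{M_\nu(u^n_k)}.
\end{equation*}
Using the characterization in Proposition~\ref{p.uparrow.bij}, these lie in $\mcl P^\upa(S^D_+)$ (with possible repetitions among the atoms, a situation already accommodated in the proof of Proposition~\ref{p.lip}), and they satisfy $M_{\mu_n} \le M_{\nu_n}$ pointwise because $M_\mu \le M_\nu$. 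Since $M_\mu$ is right-continuous with left limits and $\mu \in \mcl P_1$, the functions $M_{\mu_n}$ converge to $M_\mu$ in $L^1([0,1), S^D_+)$; in particular $\E|X_{\mu_n} - X_\mu| \to 0$, and likewise $\E|X_{\nu_n} - X_\nu| \to 0$. By the Lipschitz continuity provided by Proposition~\ref{p.lip}, it then suffices to prove $\td F_N(t,\mu_n) \le \td F_N(t,\nu_n)$.

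Next, writing $q^n_k := M_\mu(u^n_k)$ and $q'^n_k := M_\nu(u^n_k)$, one has $q^n_0 \le \cdots \le q^n_{n}$ and $q'^n_0 \le \cdots \le q'^n_{n}$ by monotonicity of $M_\mu, M_\nu$, and $q^n_k \le q'^n_k$ for each $k$ by assumption. The straight-line interpolation
\begin{equation*}
q_k(s) := (1-s)\, q^n_k + s\, q'^n_k, \qquad s \in [0,1],
\end{equation*}
remains in $S^D_+$ (convex cone) and respects $q_k(s) \le q_{k+1}(s)$, so the interpolated measure $\mu_s := \sum_{k=0}^{n} (u^n_{k+1}-u^n_k)\, \de_{q_k(s)}$ lies in $\mcl P^\upa(S^D_+)$ for every $s \in [0,1]$, with $\mu_0 = \mu_n$ and $\mu_1 = \nu_n$.

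Finally, since $\td F_N$ is smooth in the $q_k$ parameters, the fundamental theorem of calculus yields
\begin{equation*}
\td F_N(t,\nu_n) - \td F_N(t,\mu_n) = \int_0^1 \sum_{k=0}^{n} \dr_{q_k} \td F_N(t,\mu_s) \cdot (q'^n_k - q^n_k) \, \d s.
\end{equation*}
By Proposition~\ref{p.basic.ineq} each matrix $\dr_{q_k} \td F_N(t,\mu_s)$ is positive semidefinite, and $q'^n_k - q^n_k \in S^D_+$ by construction; since the Frobenius scalar product of two elements of $S^D_+$ equals $\tr(AB) = \tr(A^{1/2} B A^{1/2}) \ge 0$, each term in the integrand is nonnegative. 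Hence $\td F_N(t,\mu_n) \le \td F_N(t,\nu_n)$, and passing to the limit via Proposition~\ref{p.lip} gives the claim. I do not anticipate a serious obstacle in this argument: the only delicate point is the approximation step, which reduces to the standard fact that right-continuous increasing functions are $L^1$-approximated by their discretizations, and this is where the assumption $\mu,\nu \in \mcl P^\upa_1$ (not just $\mcl P^\upa$) is used.
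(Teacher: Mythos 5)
Your proof is correct and follows essentially the same route as the paper: the finite-support case via the positivity of $\dr_{q_k}\td F_N$ from Proposition~\ref{p.basic.ineq} together with integration along the linear interpolation (exactly as in Step~2 of the proof of Proposition~\ref{p.lip} and the proof of Proposition~\ref{p.tilted}), and the general case via the step-function discretization of $M_\mu$, the Lipschitz estimate of Proposition~\ref{p.lip}, and dominated convergence. The only cosmetic imprecision is that your discretization samples $M_\mu$ at points approaching $u$ from the left, so the pointwise convergence should be justified by the fact that the increasing map $M_\mu$ has at most countably many discontinuities (rather than by right-continuity alone); this does not affect the conclusion.
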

\begin{proof} In \eqref{e.impl.monotone}, the statement $M_\mu \le M_\nu$ is understood as a pointwise inequality over the interval $[0,1)$. 
Arguing as in Step 2 of the proof of Proposition~\ref{p.lip}, we see that the property in \eqref{e.monotone} allows us to show the implication \eqref{e.impl.monotone} for any pair of measures $\mu,\nu \in \mcl P^\uparrow(S^D_+)$ of finite support. We now consider general $\mu,\nu \in \mcl P^\upa_1(S^D_+)$, and argue by approximation. 
For each integer $K \ge 1$, the mapping 
\begin{equation*}  
\Ll\{
\begin{array}{rcl}  
[0,1) & \to & S^D_+ \\
u & \mapsto & M_\mu \Ll( \frac{\lfloor K u \rfloor}{K} \Rr) 
\end{array}
\Rr.
\end{equation*}
is right-continuous with left limits, and is increasing. We denote by $\mu^{(K)} \in \mcl P^\upa(S^D_+)$ the measure such that the mapping above is $M_{\mu^{(K)}}$. In other words, $\mu^{(K)}$ is the law of the random variable $M_\mu \Ll( \frac{\lfloor K U \rfloor}{K} \Rr)$.
We construct the measure $\nu^{(K)}$ similarly, replacing $\mu$ by $\nu$ throughout. Under the assumption that $M_\mu \le M_\nu$, we have $M_{\mu^{(K)}} \le N_{\nu^{(K)}}$. Given that the proposition is valid for measures of finite support, it thus suffices to show that $\td F_N(t,\mu^{(K)})$ converges to $\td F_N(t,\mu)$ as $K$ tends to infinity (which implies the same result for~$\nu$). In view of Proposition~\ref{p.lip}, it suffices to show that 
\begin{equation*}  
\lim_{K \to \infty} \E \Ll[ |X_{\mu^{(K)}} - X_{\mu} |\Rr] = 0.
\end{equation*}
This follows from the dominated convergence theorem. 
\end{proof}
In a similar fashion, we can use an approximation argument and combine the two properties appearing in Proposition~\ref{p.basic.ineq} into the following statement.
\begin{proposition}
\label{p.tilted}
For every $\mu,\nu \in \mcl P^\upa_1(S^D_+)$, we have
\begin{equation}  
\label{e.tilted}
\Ll( \forall u \in [0,1], \ \int_u^1 M_\mu(r) \, \d r \le  \int_u^1 M_\nu(r) \, \d r  \Rr) \quad  \implies \quad  \td F_N(t,\mu) \le \td F_N(t,\nu).  
\end{equation}
\end{proposition}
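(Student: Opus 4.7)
The plan is to combine the two parts of Proposition~\ref{p.basic.ineq} through a linear interpolation argument, after first reducing to the case of measures with finite support on a common partition of $[0,1]$.

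First, I would reduce to the case where both $\mu$ and $\nu$ admit a common representation $\mu = \sum_{k=0}^K (\zeta_{k+1}-\zeta_k)\de_{q_k}$ and $\nu = \sum_{k=0}^K (\zeta_{k+1}-\zeta_k)\de_{q'_k}$, allowing repetitions among the $(q_k)$ and $(q'_k)$ as discussed after Proposition~\ref{p.lip}. For such measures, I would check that the integral condition is equivalent to the finite family of constraints $\Delta_j := \sum_{k=j}^K (\zeta_{k+1}-\zeta_k)(q'_k - q_k) \ge 0$ in $S^D_+$ for every $j \in \{0,\ldots,K\}$: the conditions at $u = \zeta_j^-$ give exactly these, while the conditions at intermediate $u \in (\zeta_{j-1},\zeta_j)$ are convex combinations of $\Delta_{j-1} \ge 0$ and $\Delta_j \ge 0$ and hence automatic.

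Next, I would consider the linear interpolation $q^s_k := (1-s) q_k + s q'_k$ for $s \in [0,1]$. Since both $(q_k)$ and $(q'_k)$ are monotone, so is $(q^s_k)$, so the interpolating measure $\mu^s$ lies in $\mcl P^\upa(S^D_+)$. Differentiating and writing $a_k := (\zeta_{k+1}-\zeta_k)^{-1} \dr_{q_k} \td F_N(t,\mu^s)$ and $b_k := (\zeta_{k+1}-\zeta_k)(q'_k - q_k)$, I get $\tfrac{d}{ds} \td F_N(t,\mu^s) = \sum_{k=0}^K a_k \cdot b_k$. Abel summation with $\Delta_{K+1} = 0$ yields $\sum_{k=0}^K a_k \cdot b_k = a_0 \cdot \Delta_0 + \sum_{j=1}^K (a_j - a_{j-1}) \cdot \Delta_j$. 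By Proposition~\ref{p.basic.ineq}, $a_0 \in S^D_+$ and $a_j - a_{j-1} \in S^D_+$; combined with $\Delta_j \in S^D_+$ and the fact that the scalar product of two positive semidefinite matrices is nonnegative (since $\tr(AB) = \tr(A^{1/2} B A^{1/2}) \ge 0$), every term is nonnegative, and integrating over $s \in [0,1]$ gives the desired inequality.

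Finally, to extend to arbitrary $\mu, \nu \in \mcl P^\upa_1(S^D_+)$, I would approximate by measures on the common partition $\{j/K\}$ via the averaging construction: define $\mu^{(K)}$ so that its associated $M_{\mu^{(K)}}$ equals the constant $K \int_{j/K}^{(j+1)/K} M_\mu(r) \, \d r$ on $[j/K, (j+1)/K)$, and similarly for $\nu^{(K)}$. This averaging preserves monotonicity (by Fubini, since $M_\mu$ is increasing) and, crucially, satisfies $\int_{j/K}^1 M_{\mu^{(K)}} = \int_{j/K}^1 M_\mu$ at every grid point, so the integral condition transfers exactly to the discrete PSD condition on $\Delta_j$. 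Step 1 then gives $\td F_N(t,\mu^{(K)}) \le \td F_N(t,\nu^{(K)})$, and Proposition~\ref{p.lip} together with the $L^1$-convergence $\E|X_{\mu^{(K)}} - X_\mu| \to 0$ (by Lebesgue differentiation and dominated convergence, using $\mu \in \mcl P_1$) allows to pass to the limit.

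The main obstacle is the approximation step. A naive discretization (for instance, sampling $M_\mu$ at grid points) would only transfer the integral condition up to an error of order $1/K$, which would not close the argument; the averaging construction is tailored to preserve the integral condition exactly at grid points, which is precisely the discrete input required by Step 1.
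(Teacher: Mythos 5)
Your proof is correct and follows essentially the same route as the paper: the finite-support case is handled by linear interpolation of the atoms, Abel summation, and the two inequalities of Proposition~\ref{p.basic.ineq} together with the nonnegativity of the scalar product of positive semidefinite matrices, and the general case by discretization plus the Lipschitz estimate of Proposition~\ref{p.lip}. Your averaged discretization, which transfers the integral hypothesis exactly at the grid points, is a careful way of carrying out the final approximation step that the paper only sketches by referring to the proof of Proposition~\ref{p.monotone} (where plain sampling of $M_\mu$ is used, which, as you correctly note, would not preserve the hypothesis exactly).
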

\begin{proof}
We first show that the statement is valid for measures $\mu,\nu \in \mcl P^\upa(S^D_+)$ that can be written, for some integer $K \ge 1$, in the form
\begin{equation}  
\label{e.k.diracs}
\mu = \frac 1 K \sum_{k = 1}^K \de_{q_k}, \qquad \nu = \frac 1 K \sum_{k = 1}^K \de_{q'_k},
\end{equation}
with parameters $q_1, \ldots, q_K, q'_1, \ldots, q'_K \in S^D_+$ satisfying
\begin{equation*}  
q_1 \le \cdots \le q_K, \qquad q'_1 \le \cdots \le q'_K.
\end{equation*}
In this case, the property on the left side of \eqref{e.tilted} translates into
\begin{equation}  
\label{e.ass.order.q}
\forall k \in \{1,\ldots, K\}, \ \sum_{\ell = k}^K q_\ell \le \sum_{\ell = k}^K q'_\ell.
\end{equation}
For every $s \in [0,1]$, we write $\mu_s := \sum_{k = 1}^K \de_{(1-s) q_k + s q_k'}$, and observe that, by discrete integration by parts,
\begin{align*}  
\td F_N(t,\nu) - \td F_N(t,\mu) 
& 
= \sum_{k = 1}^K \int_0^1 (q_k' - q_k) \cdot \dr_{q_k} \td F_N(t,\mu_s) \, \d s
\\
& 
= \int_0^1 \sum_{k = 1}^K \sum_{\ell = k}^K (q_\ell' - q_\ell) \cdot \Ll( \dr_{q_k} \td F_N(t,\mu_s) - \dr_{q_{k-1}} \td F_N(t,\mu_s) \Rr) \, \d s,
\end{align*}
with the understanding that $\dr_{q_{0}} \td F_N = 0$. (For notational convenience, the indexing of the support of the measures starts at $k = 1$ here, unlike in the rest of this section, but similarly to the next sections.) By Proposition~\ref{p.monotone}, we have
\begin{equation*}  
\dr_{q_k} \td F_N(t,\mu_s) - \dr_{q_{k-1}} \td F_N(t,\mu_s) \ge 0.
\end{equation*}
(Notice that we use \eqref{e.monotone} in the case $k = 1$.) 
By \eqref{e.ass.order.q}, we also have that 
\begin{equation*}  
\sum_{\ell = k}^K (q_\ell' - q_\ell) \ge 0.
\end{equation*}
Since the dot product of two matrices in $S^D_+$ is nonnegative ($a \cdot b = |\sqrt a \sqrt b|^2$), we obtain the result for measures $\mu,\nu$ of the form \eqref{e.k.diracs}. To conclude, we can then argue as in the proof of Proposition~\ref{p.monotone}. 
\end{proof}

%
%
%
%
%
%

\section{Viscosity solutions}
\label{s.visc}

The goal of this section is to give a rigorous meaning to the partial differential equation in \eqref{e.hj}. The approach taken up here is to define the solution as the limit of finite-dimensional approximations. Since this specific aspect was covered in rather wide generality in \cite{bipartite}, we will be able to borrow several ingredients from there. Compared with \cite{bipartite}, there are however important differences that relate to the handling of the boundary condition. As will be explained below, these difficulties come from the fact that the geometry of the set of positive definite matrices is more intricate than that of that of the set $\R^2_+$ (or $\R^D_+$) that replaces it in \cite{bipartite}. We will bypass these difficulties by modifying the nonlinearity in the equation, outside of its ``natural'' domain of definition; see in particular \eqref{e.extend.H} below. 

\subsection{Analysis of finite-dimensional equations}
Let $K \ge 1$ be an integer. We define the open set
\begin{equation}
\label{e.def.Uk}
U_K := \Ll\{ x  = (x_1,\ldots, x_K) \in (S_{++}^D)^K \ : \ \forall k \in \{1,\ldots, K-1\}, \ x_{k+1} - x_k \in S^D_{++}  \Rr\},
\end{equation}
and its closure
\begin{equation}
\label{e.def.barUk}
\bar U_K := \Ll\{ x  = (x_1,\ldots, x_K) \in (S_+^D)^K \ : \ x_1 \le \cdots \le x_K  \Rr\}.
\end{equation}
The finite-dimensional equations we consider take the form
\begin{equation}
\label{e.hj.finite.nobd}
\dr_t f - \msf H(\nabla f) = 0 \qquad \text{in } (0,T) \times U_K,
\end{equation}
for a given locally Lipschitz function $\msf H : (S^D)^K \to \R$, $T \in (0,\infty]$, and with a prescribed initial condition at $t = 0$. In the expression above, we use the notation, with the understanding that $f = f(t,x)$ with $x = (x_1,\ldots, x_K)$, 
\begin{equation*}  
\nabla f := (\dr_{x_1} f, \ldots, \dr_{x_K} f).
\end{equation*}
Recall that each $\dr_{x_k} f$ takes values in the set $S^D$. We also impose a Neumann boundary condition on $\dr U_K$ for solutions to \eqref{e.hj.finite.nobd}. We define the outer normal to a point $x \in \dr U_K$ as the set
\begin{equation*}  
\n(x) := \Ll\{ \nu \in (S^D)^K \ : |\nu| = 1 \ \text{ and } \ \forall y \in \bar U_K, \ (y-x) \cdot \nu  \le 0\Rr\} .
\end{equation*}
To display the Neumann boundary condition, we write the equation formally as
\begin{equation}
\label{e.hj.finite}
\Ll\{
\begin{aligned}  
& \dr_t f - \msf H(\nabla f) = 0 & \qquad & \text{in } (0,T) \times U_K, \\
& \n \cdot \nabla f = 0 & \qquad & \text{on } (0,T)\times \dr U_K.
\end{aligned}
\Rr.
\end{equation}

\begin{definition}  
\label{def.solution}
We say that a function $f \in C([0,T) \times \bar U_K)$ is a \emph{viscosity subsolution} to~\eqref{e.hj.finite} if for every $(t,x) \in (0,T)\times \bar U_K$ and $\phi \in C^\infty((0,T)\times  \bar U_K)$ such that $(t,x)$ is a local maximum of $f-\phi$, we have
\begin{equation}  
\label{e.interior.cond.subsol}
\Ll(\dr_t \phi - \H(\nabla \phi)\Rr)(t,x) \le 0 \quad \text{ if } x \in U_K,
\end{equation}
and
\begin{equation}  
\label{e.bdy.cond.subsol}
\min \Ll( \inf_{\nu \in \n(x)} \nabla \phi \cdot \nu, \dr_t \phi - \H(\nabla \phi)\Rr) (t,x) \le 0 \quad \text{ if } x \in \dr U_K.
\end{equation}
We say that a function $f \in C([0,T) \times \bar U_K)$ is a \emph{viscosity supersolution} to~\eqref{e.hj.finite} if for every $(t,x) \in (0,T)\times \bar U_K$ and $\phi \in C^\infty((0,T)\times \bar U_K)$ such that $(t,x)$ is a local minimum of $f-\phi$, we have 
\begin{equation*}  
\Ll(\dr_t \phi - \H(\nabla \phi)\Rr)(t,x) \ge 0 \quad \text{ if } x \in U_K,
\end{equation*}
and
\begin{equation}  
\label{e.bdy.cond.supersol}
\max \Ll( \sup_{\nu \in \n(x)} \nabla \phi \cdot \nu, \dr_t \phi - \H(\nabla \phi)\Rr) (t,x) \ge 0  \quad \text{ if } x \in \dr U_K.
\end{equation}
We say that a function $f \in C([0,T) \times \bar U_K)$ is a \emph{viscosity solution} to~\eqref{e.hj.finite} if it is both a viscosity subsolution and a viscosity supersolution to \eqref{e.hj.finite}. 
\end{definition}

We often drop the qualifier \emph{viscosity} and simply talk about subsolutions, supersolutions, and solutions to~\eqref{e.hj.finite}. We say that a function $f \in C([0,T) \times \bar U_K)$ is a \emph{solution} to
\begin{equation}
\label{e.hj.finite.subsol}
\Ll\{
\begin{aligned}  
& \dr_t f - \msf H(\nabla f) \le 0 & \qquad & \text{in } (0,T) \times U_K, \\
& \n \cdot \nabla f \le 0 & \qquad & \text{on } (0,T)\times \dr U_K,
\end{aligned}
\Rr.
\end{equation}
whenever it is a subsolution to \eqref{e.hj.finite}; and similarly with the inequalities reversed for supersolutions.

We now recall the comparison principle proved in \cite[Proposition~3.2]{bipartite}, which in particular implies, for a given initial condition, the uniqueness of solutions.

\begin{proposition}[Comparison principle]
\label{p.comp}
Let $T \in (0,\infty]$, and let $u$ and $v$ be respectively a sub- and a super-solution to \eqref{e.hj.finite} that are both uniformly Lipschitz continuous in the $x$ variable. 
We have
\begin{equation}  
\label{e.comp}
\sup_{\Ll[0,T\Rr) \times \bar U_K} (u-v) = \sup_{\{0\}\times \bar U_K} (u-v).
\end{equation}
\end{proposition}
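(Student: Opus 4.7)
The plan is to establish \eqref{e.comp} by the classical doubling-of-variables method of Crandall-Ishii-Lions, adapted to the oblique (Neumann) boundary condition as in \cite[Proposition~3.2]{bipartite}. The principal technical point is to dispose of the boundary alternative in Definition~\ref{def.solution}, and for this the convexity of the closed convex cone $\bar U_K$ will be decisive.

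First I would reduce to the case $T < \infty$ and argue by contradiction: suppose $M := \sup_{[0,T) \times \bar U_K}(u-v)$ strictly exceeds $M_0 := \sup_{\{0\} \times \bar U_K}(u-v)$. To recover compactness (both $u,v$ are defined on an unbounded set) and to obtain a strict time monotonicity, I would perturb by writing $\tilde u(t,x) := u(t,x) - \eta/(T-t) - \alpha\, g(x)$ with $\eta,\alpha > 0$ small, where $g$ is a smooth coercive function on $\bar U_K$ whose gradient satisfies $\nabla g(x) \cdot \nu \ge c > 0$ for every $x \in \dr U_K$ and every unit vector $\nu \in \n(x)$. Such a $g$ exists because $\bar U_K$ is a closed convex cone, so the family of normal cones sits inside a single half-space whose interior contains a direction of the form $\nabla g$. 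Since $\msf H$ is locally Lipschitz and $u,v$ are uniformly Lipschitz in $x$, $\tilde u$ remains a subsolution modulo errors vanishing with $\eta$ and $\alpha$, and $\tilde u - v \to -\infty$ as $|x| \to \infty$ or $t \to T$.

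Next I would introduce the doubling
$$\Phi_\ep(t,s,x,y) := \tilde u(t,x) - v(s,y) - \frac{(t-s)^2}{2\ep} - \frac{|x-y|^2}{2\ep},$$
select a maximizer $(t_\ep,s_\ep,x_\ep,y_\ep) \in [0,T) \times [0,T) \times \bar U_K \times \bar U_K$ for $\ep$ small, and use the Lipschitz bounds and the perturbation to guarantee $t_\ep, s_\ep > 0$ and $|t_\ep-s_\ep|+|x_\ep-y_\ep| \to 0$ as $\ep \downarrow 0$. The idea is to apply the subsolution condition at $(t_\ep,x_\ep)$ to the test function $\phi_1(t,x) := \frac{(t-s_\ep)^2 + |x-y_\ep|^2}{2\ep} + \eta/(T-t) + \alpha g(x)$ and the supersolution condition at $(s_\ep,y_\ep)$ to the symmetric counterpart, then subtract. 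Since $\nabla_x\phi_1(t_\ep,x_\ep)$ and $-\nabla_y\phi_2(s_\ep,y_\ep)$ agree up to an $O(\alpha)$ correction, the Hamiltonian terms cancel, while the time penalty leaves the strictly positive contribution $\eta/(T-t_\ep)^2$, giving the desired contradiction as $\ep \downarrow 0$ and then $\alpha,\eta \downarrow 0$.

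The main obstacle is the boundary case, and this is where convexity of $\bar U_K$ intervenes. If $x_\ep \in \dr U_K$, the subsolution condition only yields
$$\min\Ll( \inf_{\nu \in \n(x_\ep)} \nabla_x\phi_1(t_\ep,x_\ep) \cdot \nu,\ \dr_t\phi_1 - \msf H(\nabla_x\phi_1)\Rr)(t_\ep,x_\ep) \le 0.$$
For any $\nu \in \n(x_\ep)$, the relations $y_\ep, 0 \in \bar U_K$ give $(y_\ep - x_\ep)\cdot \nu \le 0$ and $-x_\ep \cdot \nu \le 0$, hence
$$\nabla_x\phi_1(t_\ep,x_\ep) \cdot \nu = \frac{(x_\ep - y_\ep)\cdot \nu}{\ep} + \alpha\,\nabla g(x_\ep)\cdot \nu \ge \alpha\,\nabla g(x_\ep)\cdot \nu \ge \alpha c > 0$$
by the choice of $g$. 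This rules out the boundary alternative and forces the interior HJ inequality at $(t_\ep,x_\ep)$; a symmetric argument handles the case $y_\ep \in \dr U_K$ for the supersolution. With the boundary case eliminated, the contradiction from the previous paragraph goes through and yields \eqref{e.comp}.
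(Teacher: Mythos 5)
The paper does not actually prove this proposition; it is quoted from the bipartite paper (Proposition~3.2 there), and your doubling-of-variables scheme is the same standard route one would follow. As written, though, there is a genuine gap in the boundary case on the supersolution side. You attach the tilt $\alpha g$ only to $u$, so at the maximizer the test function for $v$ is $\phi_2(s,y) = -\frac{(t_\ep - s)^2 + |x_\ep - y|^2}{2\ep}$, whose spatial gradient at $(s_\ep,y_\ep)$ is $\frac{x_\ep - y_\ep}{\ep}$. For $\nu \in \n(y_\ep)$, convexity of $\bar U_K$ only gives $\frac{(x_\ep - y_\ep)\cdot \nu}{\ep} \le 0$, not $< 0$; it equals $0$ whenever $x_\ep - y_\ep$ is tangent to the cone at $y_\ep$ (for instance if $x_\ep = y_\ep$, which is exactly the regime the penalization drives you to). Since \eqref{e.bdy.cond.supersol} is a max-condition with threshold $\ge 0$, the value $0$ makes it vacuous, and you cannot conclude the interior inequality $\dr_t \phi_2 - \msf H(\nabla \phi_2) \ge 0$ at $(s_\ep,y_\ep)$. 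So the situation is not symmetric, precisely because only $u$ was perturbed. The standard repair is to perturb both functions, e.g.\ replace $v$ by $v + \alpha g(y)$ in the doubled functional: then the supersolution test gradient becomes $\frac{x_\ep - y_\ep}{\ep} - \alpha \nabla g(y_\ep)$, whose inner product with every $\nu \in \n(y_\ep)$ is $\le -\alpha c < 0$, and the extra cost in the Hamiltonians is again $O(\alpha \|\nabla g\|_{L^\infty})$.

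A second point needs firming up: for the ``errors vanishing with $\alpha$'' to be true, $g$ must have \emph{bounded} gradient, whereas a coercive quadratic choice gives only $\alpha |\nabla g(x_\ep)| = O(1)$, since the maximizers escape at scale $1/\alpha$. A choice that works on the cone is $g(x) := A\sqrt{1+|x|^2} - z_0 \cdot x$, where $z_0$ is a fixed point with $B(z_0,c) \subset \bar U_K$ and $A > |z_0|$: it is smooth, coercive, with $|\nabla g| \le A + |z_0|$; moreover $x \cdot \nu = 0$ for every boundary point $x \neq 0$ of the cone and $\nu \in \n(x)$, and $z_0 \cdot \nu \le -c$ for every unit $\nu$ in the polar cone $-\bar U_K^*$ (which contains $\n(x)$ for every boundary $x$, including $x=0$), so $\nabla g(x)\cdot \nu \ge c$ uniformly on $\dr U_K$. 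With these two repairs, and the routine steps you only sketch (exclusion of $t_\ep = 0$ or $s_\ep = 0$ via $M > M_0$, boundedness of $\frac{|x_\ep - y_\ep|}{\ep}$ by the Lipschitz constants so that only a local Lipschitz bound on $\msf H$ is used), your argument goes through.
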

For any $x = (x_1,\ldots, x_K)\in (S^D)^K$, we use the notation 
\begin{equation*}  
|x| := \Ll( \sum_{k = 1}^K |x_k|^2\Rr) ^\frac 1 2.
\end{equation*}
The existence and regularity of solutions are provided by the next result, borrowed from \cite[Proposition~3.4]{bipartite}. 

\begin{proposition}[Existence of solutions]
\label{p.existence}
For every uniformly Lipschitz initial condition $f_0 : \bar U_K \to \R$, there exists a viscosity solution $f$ to \eqref{e.hj.finite} that satisfies $f(0,\cdot) = f_0$. Moreover, the function $f$ is Lipschitz continuous, and we have 
\begin{equation}  
\label{e.lip.hj.finite}
\| \, |\nabla f| \, \|_{L^\infty(\R_+\times U_K)} = \| \, |\nabla f_0| \, \|_{L^\infty(U_K)} .
\end{equation}
\end{proposition}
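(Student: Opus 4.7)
The plan is to use Perron's method, taking the comparison principle of Proposition~\ref{p.comp} as the engine, and then to read off the Lipschitz estimate \eqref{e.lip.hj.finite} from a translation-comparison argument. Set $L := \||\nabla f_0|\|_{L^\infty(U_K)}$ and $M := \sup_{|p| \le L} |\msf H(p)|$, which is finite because $\msf H$ is locally Lipschitz. I would first verify that the smooth functions $w^\pm(t,x) := f_0(x) \pm Mt$ serve as global barriers: $w^+$ is a supersolution of \eqref{e.hj.finite} (in the interior $\dr_t w^+ = M \ge \msf H(\nabla w^+)$ in the viscosity sense coming from the Lipschitz regularity of $f_0$, and \eqref{e.bdy.cond.supersol} is trivial since one can choose the $\dr_t\phi - \msf H$ alternative), and symmetrically $w^-$ is a subsolution.

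Next, I would define the Perron envelope
\begin{equation*}
f(t,x) := \sup \bigl\{ u(t,x) \ : \ u \text{ is a subsolution of \eqref{e.hj.finite},} \ u(0,\cdot) \le f_0, \ u \le w^+ \bigr\},
\end{equation*}
and show, by the standard passage through upper- and lower-semicontinuous envelopes, that $f^*$ is a subsolution and $f_*$ is a supersolution. Invoking Proposition~\ref{p.comp} between $f^*$ and $f_*$ (which agree with $f_0$ on $\{t=0\}$ thanks to the sandwich $w^- \le f \le w^+$) gives $f^* \le f_*$, hence $f$ is a viscosity solution. For the Lipschitz bound in $x$, I would exploit translation invariance: for any $h \in (S^D)^K$ such that the shift $x \mapsto x+h$ maps $\bar U_K$ into itself, which is precisely the cone of $h$'s with $0 \le h_1 \le \cdots \le h_K$ in $S^D_+$, both $(t,x) \mapsto f(t,x+h)$ and $(t,x) \mapsto f(t,x) + L|h|$ solve \eqref{e.hj.finite} on the relevant domain (the Neumann condition is preserved because the $x$-translation preserves outer normals at the shared faces) and the initial inequality holds by the Lipschitz bound on $f_0$; the comparison principle then gives $|f(t,x+h) - f(t,x)| \le L|h|$. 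Since such $h$'s span $(S^D)^K$, this yields the upper bound in \eqref{e.lip.hj.finite}, and the matching lower bound $\||\nabla f\|\|_\infty \ge \||\nabla f_0|\|_\infty$ follows by evaluating at $t=0$. The $t$-Lipschitz regularity then comes for free from the equation: $|\dr_t f| \le M$.

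The main obstacle will be the Neumann boundary condition in Perron's step: the set $\bar U_K$ is a stratified polyhedron in $(S^D)^K$ whose low-dimensional faces carry multidimensional normal cones $\n(x)$, so \eqref{e.bdy.cond.subsol} becomes a permissive ``min over a cone'' condition while \eqref{e.bdy.cond.supersol} becomes a restrictive ``sup over a cone.'' Careful choice of test functions is required to show that the Perron envelope's upper semicontinuous envelope is still a subsolution at boundary strata, and similarly for the lower envelope. This delicate but routine analysis is carried out in \cite[Proposition~3.4]{bipartite} in a setting which uses only the structure of an increasing cone in a partially ordered vector space, and the argument transfers without change to the present polyhedral cone $\bar U_K \subset (S^D)^K$.
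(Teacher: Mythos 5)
The paper does not actually prove this proposition; it imports it wholesale from \cite[Proposition~3.4]{bipartite}, so your outline is a reconstruction rather than a retelling, and it has two genuine problems. First, in the Perron step you invoke Proposition~\ref{p.comp} for the envelopes $f^*$ and $f_*$, but the comparison principle of this paper is stated only for sub- and supersolutions that are \emph{uniformly Lipschitz in $x$}; Perron envelopes are a priori only semicontinuous, so the conclusion $f^*\le f_*$ does not follow as written. This can in principle be repaired (e.g.\ by running Perron inside a class of functions that are $L$-Lipschitz in $x$ and checking that the boundary bump construction stays in that class), but that is exactly the delicate part, and deferring it to \cite{bipartite} while presenting the rest as a proof leaves the argument incomplete.

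Second, and more seriously, the translation-comparison argument for \eqref{e.lip.hj.finite} does not work on this domain. The admissible shifts are exactly $h\in\bar U_K$, and for such $h$ the function $g(t,x):=f(t,x+h)$ is \emph{not} known to be a sub- or supersolution of the Neumann problem on $\bar U_K$: at a point $x\in\dr U_K$ with $x+h\in U_K$, a test function touching $g$ at $(t,x)$ relative to $\bar U_K$ corresponds to a maximum of $f$ over the translated wedge $\Ll(\bar U_K\cap B(x,r)\Rr)+h$, which is not a neighborhood of the interior point $x+h$; hence neither the interior inequality for $f$ nor any boundary alternative can be invoked there. Your remark that ``the $x$-translation preserves outer normals at the shared faces'' only addresses boundary points that remain on the boundary after the shift, which is not the problematic case. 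Moreover, even if one granted the increment bound $|f(t,x+h)-f(t,x)|\le L|h|$ for all $h\in\bar U_K$, this controls directional difference quotients only along a proper convex cone, and such control does not give $|\nabla f|\le L$: writing a general increment as a difference of two cone elements inflates the constant by a factor depending on the geometry of $\bar U_K$ (already for $D=1$, $K=2$, a gradient of the form $(a,-a)$ satisfies $|\nabla f\cdot h|\le a|h|$ for all $h$ with $0\le h_1\le h_2$ while $|\nabla f|=a\sqrt2$). So the asserted identity \eqref{e.lip.hj.finite} — an equality of sup norms, which is later relied upon in Remark~\ref{r.barxi} — is not established by your route; one needs an argument that produces the Lipschitz bound globally, e.g.\ by building it into the construction or into the doubling-of-variables machinery behind the comparison principle, as is done in \cite{bipartite}.
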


\subsection{Tilted functions and boundary condition}
Our strategy for the verification of the boundary condition relies crucially on the monotonicity properties of the free energy, see Propositions~\ref{p.basic.ineq} and \ref{p.tilted}. In order to discuss this precisely, it is convenient to introduce the cone dual to $\bar U_K$, denoted $\bar U_K^*$. We recall the following result from \cite[Lemma~3.3]{bipartite}.
\begin{lemma}[Dual cone to $\bar U_K$]
\label{l.dual.cone}
Let $\bar U_K^*$ denote the cone dual to $\bar U_K$, that is,
\begin{equation}  
\label{e.def.baruk*}
\bar U_K^* := \Ll\{ x \in (S^D)^K \ : \ \forall v\in \bar U_K, \ x \cdot v \ge 0\Rr\} .
\end{equation}
We have
\begin{equation}  
\label{e.ident.baruk*}
\bar U_K^* = \Ll\{ x \in (S^D)^K \ : \ \forall k \in \{ 1,\ldots, K\}, \ \sum_{\ell = k}^K x_\ell \ge 0 \Rr\} ,
\end{equation}
and
\begin{equation}  
\label{e.double.dual.uk}
\bar U_K = \Ll\{ v \in (S^D)^K \ : \ \forall x \in \bar U_K^*, \ x \cdot v \ge 0 \Rr\} .
\end{equation}
\end{lemma}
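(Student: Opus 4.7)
The plan is to verify both identities as consequences of Abel summation together with the characterization of positive semidefinite matrices recalled in \eqref{e.charact.psd}. The key computational identity is that for any $v = (v_1,\ldots, v_K) \in \bar U_K$, writing $w_1 := v_1$ and $w_k := v_k - v_{k-1}$ for $2 \le k \le K$, the increments $w_k$ all belong to $S^D_+$, and summation by parts yields
\[
x \cdot v = \sum_{k=1}^K x_k \cdot v_k = \sum_{k=1}^K w_k \cdot R_k,
\]
where $R_k := \sum_{\ell = k}^K x_\ell$. I will also repeatedly use the elementary fact already noted in the paper, namely that if $a,b \in S^D_+$ then $a \cdot b = |\sqrt{a}\sqrt{b}|^2 \ge 0$.

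For the identity \eqref{e.ident.baruk*}, the inclusion ``$\supseteq$'' follows directly from the displayed formula: if every $R_k \in S^D_+$, then every summand $w_k \cdot R_k$ is nonnegative, hence $x \cdot v \ge 0$ for every $v \in \bar U_K$. For the converse inclusion, given $x \in \bar U_K^*$, I plug in the specific test vector $v^{(k,w)} \in \bar U_K$ whose $\ell$-th coordinate is $0$ for $\ell < k$ and $w$ for $\ell \ge k$, where $w \in S^D_+$ is arbitrary; a direct computation gives $x \cdot v^{(k,w)} = w \cdot R_k$, so $w \cdot R_k \ge 0$ for every $w \in S^D_+$, and \eqref{e.charact.psd} then forces $R_k \ge 0$.

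For \eqref{e.double.dual.uk}, the inclusion ``$\subseteq$'' is immediate from the definition of $\bar U_K^*$. The reverse inclusion I establish contrapositively: if $v \notin \bar U_K$, then either $v_1 \notin S^D_+$, or there exists $k \in \{1,\ldots,K-1\}$ with $v_{k+1} - v_k \notin S^D_+$. Applying \eqref{e.charact.psd} in each case yields some $w \in S^D_+$ with $w \cdot v_1 < 0$ or $w \cdot (v_{k+1} - v_k) < 0$, respectively. In the first case I take $x := (w, 0, \ldots, 0)$, whose partial sums are $R_1 = w$ and $R_j = 0$ for $j \ge 2$. In the second case I take $x$ with $x_k := -w$, $x_{k+1} := w$, and all other entries zero, for which $R_j = 0$ when $j \le k$, $R_{k+1} = w$, and $R_j = 0$ for $j \ge k+2$. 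In both cases every partial sum $R_j$ lies in $S^D_+$, so by the first identity $x \in \bar U_K^*$, and a one-line computation gives $x \cdot v < 0$.

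I do not expect any genuine difficulty: the whole argument is a routine exercise in finite-dimensional convex duality, and the only point that deserves a moment of attention is the explicit verification that the two separating vectors built in the contrapositive step have all nonnegative tail sums $R_j$, which is clear by inspection.
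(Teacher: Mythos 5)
Your proof is correct: the summation-by-parts identity $x \cdot v = \sum_k w_k \cdot R_k$, the nonnegativity of the dot product of two elements of $S^D_+$, the test vectors $(0,\ldots,0,w,\ldots,w)$, and the two explicitly constructed separating elements in the contrapositive step (whose tail sums are indeed $w$ or $0$, hence in $\bar U_K^*$ by the first identity) all check out, and \eqref{e.charact.psd} is invoked exactly where needed. Note that this paper does not prove the lemma itself but recalls it from \cite[Lemma~3.3]{bipartite}, so there is no in-paper proof to compare against; your argument is the natural one (and in particular it makes the bidual statement \eqref{e.double.dual.uk} elementary, without appealing to the general bipolar theorem for closed convex cones), so it stands as a complete self-contained justification.
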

For $V$ a subset of $(S^D)^K$, we say that a function $f : V \to \R$ is \emph{tilted} if, for every $x,y \in V$, we have
\begin{equation*}  
y - x \in \bar U_K^*\quad \implies \quad f(x) \le f(y),
\end{equation*}
For any interval $I \subset \R_+$, we say that a function $f : I \times V \to \R$ is \emph{tilted} if the function $f(t,\cdot)$ is tilted for every $t \in I$. We recall from \cite[Lemma~3.5]{bipartite} that a Lipschitz function $f$ is tilted if and only if $\nabla f \in \bar U_K$ almost everywhere. 
By Proposition~\ref{p.tilted}, we see that the mapping
\begin{equation}  
\label{e.barFN.on.uk}
\Ll\{
\begin{array}{rcl}  
\R_+ \times \bar U_K & \to & \R \\
(t,x) & \mapsto & \bar F_N \Ll( t, \frac 1 K \sum_{k = 1}^K \de_{x_k} \Rr) 
\end{array}
\Rr.
\end{equation}
is tilted; and therefore,  by \cite[Proposition~3.6]{bipartite}, that it
satisfies the boundary condition for being a subsolution to \eqref{e.hj.finite}.

In the remainder of this subsection, we explain why the strategy used in \cite{bipartite} for the verification of the boundary condition for being a supersolution cannot be applied in the present more general setting. As just discussed, the gradient $\nabla \bar F_N$ of the mapping in \eqref{e.barFN.on.uk} belongs to $\bar U_K$. Suppose that $\phi$ is a smooth function such that $\bar F_N - \phi$ has a local minimum at $(t,x)$. If $x \in U_K$, it then follows that $\nabla \phi  = \nabla \bar F_N$, and in particular $\nabla \phi \in \bar U_K$. Notice also that if $x = 0$, then the cone generated by $\n(x) = \n(0)$ is $-\bar U_K^*$, by~\eqref{e.def.baruk*}. Using also \eqref{e.double.dual.uk}, we thus see that
\begin{equation*}  
\sup_{\nu \in \n(0)} \nu \cdot \nabla \phi(t,0) \ge 0 \quad \text{ or } \quad \nabla \phi(t,0) \in \bar U_K. 
\end{equation*}
Whenever $x \in U_K$ or $x = 0$, we thus only have to verify the inequality
\begin{equation*}  
(\dr_t \phi - \H(\nabla \phi))(t,x) \ge 0
\end{equation*}
in situations for which $\nabla \phi(t,x) \in \bar U_K$. In \cite{bipartite} (see Step 5 of the proof of Theorem~4.1 there), we could extend this observation to every $x \in \bar U_K$. However, this relied on particular properties of the simpler geometry of the domain under consideration there, in which $S^D_+$ is replaced by $\R_+^2$ (or $\R_+^D$). This crucial point is no longer valid in our context, as we explain now. That is, we can find a tilted function $f$, a smooth test function $\phi$, and a contact point $(t,x) \in (0,\infty)\times \bar U_K$ such that $f-\phi$ has a local minimum at $(t,x)$, and yet
\begin{equation*}  
\sup_{\nu \in \n(x)} \nu \cdot \nabla \phi(t,x) < 0  \quad \text{ and } \quad \nabla \phi(t,x) \notin \bar U_K. 
\end{equation*}
We give an example for which $f$ and $\phi$ are constant in time and linear in $x$ (one can then add a linear function of time to $f$ if one wants to ensure that $f$ is indeed a supersolution). In this case, the condition that $(t,x)$ be a local minimum of $f-\phi$ is equivalent to
\begin{equation}  
\label{e.boundary.notriv}
\forall y \in \bar U_K, \quad (y-x) \cdot \nabla (f - \phi)  \ge 0.
\end{equation}
We consider, for $K = 1$ and $D = 2$, the choice of
\begin{equation*}  
x = 
\begin{pmatrix}  
0 & 0 \\
0 & 1
\end{pmatrix},
\qquad 
\nabla f = 
\begin{pmatrix}  
4 & 2 \\
2 & 1 
\end{pmatrix},
\quad \text{ and } \quad
\nabla \phi = 
\begin{pmatrix}  
1 & 2 \\
2 & 1
\end{pmatrix}.
\end{equation*}
We clearly have $x \in S^2_+ = \bar U_1$, $\nabla f \in S^2_+ = \bar U_1$ (and therefore $f$ is tilted), and $\nabla \phi \notin S^2_+ = \bar U_1$. The property in \eqref{e.boundary.notriv} is satisfied since
\begin{equation*}  
\nabla (f-\phi) = 
\begin{pmatrix}  
3 & 0 \\
0 & 0
\end{pmatrix}
\end{equation*}
and for every $y \in S^2_+$, we have that the $(1,1)$ entry of the matrix $y-x$ is that of $y$, which is nonnegative, since $y \in S^2_+$. On the other hand, since here $K = 1$, we have that
\begin{equation*}  
\n(x) = \Ll\{ \nu \in S^D \ : \ |\nu| = 1 \ \text{ and } \ \forall y \in S^2_+, \ (y-x) \cdot \nu \le 0 \Rr\} .
\end{equation*}
Since we can in particular select $y \in x + S^2_+$ in this definition, we see using \eqref{e.charact.psd} that every $\nu \in \n(x)$ must be such that $-\nu \in S^2_+$. Choosing diagonal matrices for $y$, we also see that the $(2,2)$ entry of $\nu$ must be zero. Since $-\nu \in S^2_+$, the Cauchy-Schwarz inequality then yields that $-\nu$ must be diagonal; and since it is normalized to be of unit norm, we conclude that
\begin{equation*}  
\n(x) = \Ll\{
\begin{pmatrix}  
-1 & 0 \\
0 & 0
\end{pmatrix}
\Rr\}.
\end{equation*}
In particular, the condition $\nabla \phi \cdot \nu < 0$ is indeed satisfied for every $\nu \in \n(x)$. 

The situation described here is unlike the one faced in \cite{bipartite}. In this earlier work, in some sense the problem ``only looks at the diagonal elements of the matrices''; for instance, the ``counterexample'' above simplifies into $x = (0,1)$, $\nabla f = (4,1)$, $\nabla \phi = (1,1)$, and so $\nabla \phi \in \R_+^2$, as desired. 
Our strategy to circumvent this difficulty is to find a suitable modification of the nonlinearity outside of $\bar U_K$, see \eqref{e.extend.H} below.

\subsection{Convergence of finite-dimensional approximations}
We now proceed to state the convergence of finite-dimensional approximations to \eqref{e.hj}. In order to do so, we first need to find good ``representatives'' of a measure $\mu \in \mcl P^\upa(S^D)$ within the sets $\bar U_K$. Mapping an element of $\bar U_K$ into a measure is straightforward: for every $x \in \bar U_K$, we can simply consider the measure
\begin{equation}  
\label{e.map.measure}
\frac 1 K \sum_{k = 1}^K \de_{x_k}.
\end{equation}
There is some flexibility to define a converse operation, mapping a given measure to an element of $\bar U_K$. Fixing $K \ge 1$, and recalling the definition of $M_\mu$ from Proposition~\ref{p.uparrow.bij}, we set, for every $\mu \in \mcl P^\upa_1(S^D)$ and $k \in \{1,\ldots, K\}$,
\begin{equation}
\label{e.def.mu.to.xk}
x^{(K)}_k(\mu) := K \int_{\frac{k-1}{K}}^{\frac k K} M_{\mu}(u) \, \d u.
\end{equation}
Notice that $x^{(K)}(\mu) := (x^{(K)}_k(\mu))_{1 \le k \le K}$ belongs to $\bar U_K$. 

We next discuss the discretization of the nonlinearity in \eqref{e.hj}. Recalling \eqref{e.def.dmu}, and for $\mu$ of the form in \eqref{e.map.measure}, we have
\begin{equation*}  
\int \bar \xi(\dr_\mu f) \, \d \mu = \frac 1 K \sum_{k = 1}^K \bar \xi(K\dr_{x_k} f).
\end{equation*}
Recall also that, for the function of interest to us, namely $\bar F_N$ (or more precisely: the function appearing in \eqref{e.barFN.on.uk}) we have that $\nabla \bar F_N \in \bar U_K$; see the discussion around \eqref{e.barFN.on.uk}. In view of this, we set
\begin{equation}  
\label{e.def.Hk}
\forall p \in \bar U_K, \qquad \H_K(p) := \frac 1 K \sum_{k = 1}^K \bar \xi(K p_k). 
\end{equation}
For almost every $p \in U_K$, we can compute
\begin{equation*}  
\nabla \H_K(p) = \Ll( \nabla \bar \xi(K p_1), \ldots, \nabla \bar \xi(K p_K) \Rr) .
\end{equation*}
Since $\bar \xi$ is proper, we see that $\nabla H_K$ maps $\bar U_K$ to itself. In other words, the mapping $\H_K$ is tilted on $\bar U_K$. 

As discussed in the previous subsection, we need to identify a suitable extension of $\H_K$ to $p \notin \bar U_K$. For every $p \in (S^D)^K$, we set
\begin{equation}  
\label{e.extend.H}
\H_K(p) := \inf \Ll\{ \H_K(q) \ : \ q \in \bar U_K \cap (p+\bar U_K^*) \Rr\} .
\end{equation}
Since $\H_K$ is tilted over $\bar U_K$, whenever $p \in \bar U_K$ and $q \in \bar U_K \cap (p + \bar U_K^*)$, we have that $\H_K(p) \le \H_K(q)$, and therefore the identity \eqref{e.extend.H} is indeed valid when $p \in \bar U_K$. We will see in the course of the proof of Proposition~\ref{p.conv.finite.dim} that $\H_K$ is well-defined, finite, and uniformly Lipschitz over $(S^D)^K$.
Recalling the notation $X_\mu$ from \eqref{e.def.Xmu}, we are now ready to state the main result of this section.
\begin{proposition}[Convergence of finite-dimensional approximations]
\label{p.conv.finite.dim}
Let $\bar{\xi} : S^D_+ \to \R$ be a proper and uniformly Lipschitz function,  $L < \infty$, and $\psi : \mcl P^\upa_1(S^D) \to \R$ be such that for every $\mu,\nu \in \mcl P^\upa_1(S^D)$, 
\begin{equation}  
\label{e.lip.assumption}
\Ll|\psi(\mu) - \psi(\nu)\Rr| \le L \, \E \Ll[ |X_\mu - X_\nu | \Rr] .
\end{equation}
For every integer $K \ge 1$, with $\H_K$ defined in \eqref{e.def.Hk}-\eqref{e.extend.H}, let $f^{(K)} : \R_+ \times \bar U_K \to \R$ be the viscosity solution to
\begin{equation}
\label{e.def.finite.dim}
\Ll\{
\begin{aligned}  
& \dr_t f^{(K)} - \H_K\Ll( \nabla f^{(K)} \Rr) = 0 & \quad \text{in } (0,\infty) \times U_K,
\\
& \n \cdot \nabla f^{(K)} = 0 & \quad \text{on } (0,\infty) \times \dr U_K,
\end{aligned}
\Rr.
\end{equation}
with initial condition given, for every $x \in \bar U_K$, by
\begin{equation}
\label{e.def.finite.dim.init}
f^{(K)}(0,x) = \psi \Ll( \frac 1 K \sum_{k = 1}^K \de_{x_k} \Rr) .
\end{equation}
For every $t \ge 0$ and $\mu \in \mcl P^\upa_2(S^D)$, the following limit exists and is finite:
\begin{equation}  
\label{e.conv.finite.dim}
f(t,\mu) := \lim_{K \to \infty} f^{(K)}\Ll(t,x^{(K)}(\mu)\Rr),
\end{equation}
where on the right side, we use the notation defined in~\eqref{e.def.mu.to.xk}. By definition, we interpret this limit as the solution to \eqref{e.hj}.
Moreover, there exists a constant $C < \infty$ such that for every integer $K \ge 1$, $t\ge 0$, and $\mu, \nu \in \mcl P_2^\upa(S^D)$, we have
\begin{equation}
\label{e.quant.conv}
\Ll| f(t,\mu) - f^{(K)}(t,x^{(K)}(\mu)) \Rr| \le \frac{C}{\sqrt{K}} \Ll( t + \Ll(\E \Ll[ |X_\mu|^2 \Rr]\Rr) ^\frac 1 2  \Rr) ,
\end{equation}
as well as
\begin{equation}  
\label{e.lip.f}
\Ll| f(t,\mu) - f(t,\nu) \Rr| \le L \Ll(\E \Ll[ |X_\mu - X_\nu|^2 \Rr] \Rr)^\frac 1 2.
\end{equation}
\end{proposition}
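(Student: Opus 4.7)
My plan follows standard viscosity solution methodology, with the two main ingredients being the existence/gradient estimate from Proposition~\ref{p.existence} and the comparison principle \ref{p.comp}. First I would verify that $\H_K$, as defined on all of $(S^D)^K$ by \eqref{e.def.Hk}-\eqref{e.extend.H}, is well-defined, finite, and uniformly Lipschitz, with constant depending only on the Lipschitz constant of $\bar \xi$ and on $D$. Non-emptiness of $\bar U_K \cap (p + \bar U_K^*)$ holds because $\bar U_K^*$ contains enough directions to translate any $p \in (S^D)^K$ into $\bar U_K$; uniform Lipschitzness then follows from a standard infimum-of-Lipschitz-functions argument. Once $\H_K$ has these properties, Proposition~\ref{p.existence} produces $f^{(K)}$ with $\| |\nabla f^{(K)}| \|_\infty = \| |\nabla f^{(K)}(0,\cdot)| \|_\infty$. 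A short computation shows that the initial datum is $L/\sqrt{K}$-Lipschitz in Euclidean norm: for ordered tuples $x, y \in \bar U_K$, one has $\E[|X_{\mu_x} - X_{\mu_y}|] = \frac{1}{K} \sum_k |x_k - y_k| \le \frac{1}{\sqrt{K}} |x-y|$ by Cauchy-Schwarz, so $|\nabla f^{(K)}| \le L/\sqrt{K}$ throughout.

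The Lipschitz estimate \eqref{e.lip.f} now follows by combining this gradient bound with the Jensen-type inequality $|x^{(K)}(\mu) - x^{(K)}(\nu)|^2 \le K \, \E[|X_\mu - X_\nu|^2]$, which is immediate from \eqref{e.def.mu.to.xk}: the $L^2$-Lipschitz constant of $\mu \mapsto f^{(K)}(t, x^{(K)}(\mu))$ is $L$ uniformly in $K$, and passes to the limit $f$ once the latter is established.

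The core step is the Cauchy property for $(f^{(K)}(t, x^{(K)}(\mu)))_K$ at rate $C/\sqrt{K}(t + (\E[|X_\mu|^2])^{1/2})$. For $K' = NK$ a multiple of $K$ (the general case reducing by inserting a common refinement), I would introduce the block-averaging map $p : \bar U_{K'} \to \bar U_K$, $p_k(y) = \frac{1}{N} \sum_{k' \text{ in block } k} y_{k'}$, and study the lift $v(t,y) := f^{(K)}(t, p(y))$. Two useful facts: $p(x^{(K')}(\mu)) = x^{(K)}(\mu)$, so $v(t, x^{(K')}(\mu)) = f^{(K)}(t, x^{(K)}(\mu))$; and the chain rule gives $\dr_{y_{k'}} v = \frac{1}{N} \dr_{x_{k(k')}} f^{(K)}$, so whenever $\nabla f^{(K)} \in \bar U_K$ the block-constant gradient $\nabla v$ sits in $\bar U_{K'}$ and the direct definition \eqref{e.def.Hk} yields $\H_{K'}(\nabla v) = \H_K(\nabla f^{(K)})$. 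I would then establish that $v$ is a viscosity sub- and supersolution of the $K'$-equation and invoke Proposition~\ref{p.comp}; the initial-condition discrepancy is bounded via \eqref{e.lip.assumption} by a careful $L^1$-estimate that uses the monotonicity of $M_\mu$ together with the $L^2$-control on $X_\mu$ to handle the within-block variation of the monotone rearrangement of $\mu$.

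I expect the Neumann boundary condition for $v$ to be the main obstacle, particularly at boundary points $y \in \dr U_{K'}$ whose image $p(y)$ lies in the interior of $\bar U_K$: at such points the boundary data of $f^{(K)}$ give no information, and one must use the structure of $\H_{K'}$ directly. This is precisely the configuration identified in Section~\ref{s.visc} as breaking the argument from \cite{bipartite}, and it is the reason for the extension \eqref{e.extend.H}: for any test function $\phi$ touching $v$ at such a point, the definition of $\H_{K'}$ as an infimum over the tilted translation cone should ensure that either $\nabla \phi \in \bar U_{K'}$ (reducing to a situation treatable as in \cite[Proposition~3.6]{bipartite}) or else an outer normal $\nu \in \n(y)$ can be exhibited along which the relevant inequality in \eqref{e.bdy.cond.subsol} or \eqref{e.bdy.cond.supersol} holds, via the characterization \eqref{e.ident.baruk*} of $\bar U_{K'}^*$. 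Making this verification rigorous, uniformly across the boundary stratification of $\bar U_{K'}$, is the technical heart of the proof.
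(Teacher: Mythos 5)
Your overall architecture (block-averaging lift $v(t,y)=f^{(K)}(t,p(y))$, comparison at level $K'$, within-block estimate on the initial data through \eqref{e.lip.assumption}, the gradient bound $L/\sqrt K$ from Proposition~\ref{p.existence}, and the Cauchy--Schwarz step giving \eqref{e.lip.f}) is the same as the paper's, which itself defers most of this machinery to the corresponding result in the bipartite reference. The genuine gap sits exactly at the point the paper identifies as the new difficulty. You write that ``whenever $\nabla f^{(K)}\in\bar U_K$'' the block-constant gradient lies in $\bar U_{K'}$ and the naive formula \eqref{e.def.Hk} gives $\H_{K'}(\nabla v)=\H_K(\nabla f^{(K)})$. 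But the property $\nabla f^{(K)}\in\bar U_K$ is precisely what is \emph{not} established (see Remark~\ref{r.notation.tricky}), and in any case the viscosity verification for $v$ runs through gradients of test functions, which have no reason to lie in $\bar U_K$ or $\bar U_{K'}$ --- this is the whole reason the extension \eqref{e.extend.H} was introduced. What the paper actually proves, and what your plan omits, is the consistency identity $\H_{K'}(p')=\H_K(p)$ for \emph{arbitrary} $p\in(S^D)^K$, where $p'$ is the repeat-and-rescale lift: the inequality $\H_{K'}(p')\le\H_K(p)$ comes from lifting competitors in \eqref{e.extend.H}, and the reverse comes from block-averaging an arbitrary competitor $q\in\bar U_{K'}\cap(p'+\bar U_{K'}^*)$ and checking that its repeated version is again admissible and does not increase the value of $\H_{K'}$ (using that $\H_{K'}$ is tilted on $\bar U_{K'}$). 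Without this identity off $\bar U_K$, the transfer of the equation between levels $K$ and $K'$ is not justified.

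Second, your treatment of the uniform Lipschitz continuity of the extended $\H_K$ is too quick. It is not an infimum over a fixed family of uniformly Lipschitz functions: the constraint set $\bar U_K\cap(p+\bar U_K^*)$ moves with $p$, so one must show that admissible competitors for $p$ can be translated into admissible competitors for $p'$ at cost $O(|p'-p|)$. The paper does this by projecting $p'-p$ onto the closed convex cone $\bar U_K$, obtaining $m\in\bar U_K$ with $|m|\le|p'-p|$ and $p-p'+m\in\bar U_K^*$, so that $q\mapsto q+m$ maps $\bar U_K\cap(p+\bar U_K^*)$ into $\bar U_K\cap(p'+\bar U_K^*)$; this argument also gives non-emptiness and is carried out in the rescaled norm $|\cdot|_{2*}$ so that the constant is uniform in $K$. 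Some argument of this kind is indispensable before Propositions~\ref{p.comp} and~\ref{p.existence} can legitimately be applied to \eqref{e.def.finite.dim} and before the quantitative rate in \eqref{e.quant.conv} can be extracted, and the ``standard'' appeal in your write-up does not supply it.
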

\begin{remark}  
\label{r.notation.tricky}
Our redefinition of the nonlinearities $\H_K$ outside of $\bar U_K$, see \eqref{e.extend.H}, renders the notation in \eqref{e.hj} somewhat misleading. Indeed, since the identity in \eqref{e.def.Hk} is only valid for $p \in \bar U_K$, the notation in \eqref{e.hj} is only really legitimate if the solution to \eqref{e.def.finite.dim} satisfies $\nabla f^{(K)} \in \bar U_K$. For tilted initial conditions $\psi$, this is rather plausible, but this property is \emph{not} proved in the present paper. Recall however that we do know that the gradient of the free energy in \eqref{e.barFN.on.uk} belongs to $\bar U_K$, and we ultimately aim to show that $f^{(K)}$ becomes a sharp approximation of this free energy.
\end{remark}
\begin{proof}[Proof of Proposition~\ref{p.conv.finite.dim}]
The strategy is similar to that for the proof of \cite[Proposition~3.7]{bipartite}, so we only discuss the new ingredients. For all integers $K, R \ge 1$, we set $K' := KR$, and for every $p \in (S^D)^K$, we define
\begin{equation}  
\label{e.def.p'}
p' := \frac 1 R (p_1,\ldots, p_1, p_2,\ldots, p_2, \ldots, p_K, \ldots ,p_K) \in (S^D)^{K'},
\end{equation}
where each term $p_k$ is repeated $R$ times. We claim that
\begin{equation}
\label{e.consistency.H}
\H_{K'}(p') = \H_K(p).
\end{equation}
When $p \in \bar U_K$, this is clear, by \eqref{e.def.Hk}. In general, using this observation and \eqref{e.extend.H}, we readily see that $\H_{K'}(p') \le \H_K(p)$. Let $q \in \bar U_{K'} \cap (p' + \bar U_{K'}^*)$, 
\begin{equation*}  
\td q := \Ll(\frac 1 R \sum_{r = 1}^R q_r, \frac 1 R \sum_{r = 1}^R q_{R+r}, \ldots, \frac 1 R \sum_{r = 1}^R q_{(K-1)R+r} \Rr) \in (S^D)^K,
\end{equation*}
and let $\td q' \in (S^D)^{K'}$ be defined from $\td q$ as $p'$ was defined from $p$ in \eqref{e.def.p'}, repeating each coordinate $R$ times. 
One can verify that $\td q' \in \bar U_{K'} \cap (p' + \bar U_{K'}^*)$. We also have $\td q - \td q' \in \bar U_{K'}^*$, and thus
\begin{equation*}  
\H_{K'}(q) \le \H_{K'}(\td q') = \H_{K}(\td q),
\end{equation*}
where we used \eqref{e.consistency.H} in $\bar U_K$ in the last step. This shows the converse bound $\H_{K'}(p') \ge \H_{K}(p)$, and completes the proof of \eqref{e.consistency.H} for arbitrary $p \in (S^D)^K$. This ensures that the argument in the first step of the proof of \cite[Proposition~3.7]{bipartite} still applies.

In order to get the remainder of this proof to work, the key point is to verify that the nonlinearity $\H_K$ is Lipschitz continuous, uniformly over $K$, once the norms are scaled properly in terms of $K$. The rescaled norm is defined, for every $p \in (S^D)^K$, by
\begin{equation*}  
|p|_{2*} := \Ll( \frac 1 K \sum_{k = 1}^K (K |p_k|)^2 \Rr)^\frac 1 2.
\end{equation*}
We show that there exists a constant $C < \infty$ such that for every $K \ge 1$ and $p,p' \in (S^D)^K$,
\begin{equation}
\label{e.lip.H}
\Ll| \H_K(p') - \H_K(p) \Rr| \le C |p'-p|_{2*}.
\end{equation}
When $p$ and $p'$ both belong to $\bar U_K$, the validity of \eqref{e.lip.H} follows from \eqref{e.def.Hk} and the fact that $\bar \xi$ is Lipschitz continuous.
For the general case, with arbitrary $p,p' \in (S^D)^K$, let $m$ denote the projection of $p'-p$ onto $\bar U_K$. In other words, $m \in \bar U_K$ is the minimizer of the mapping $m' \mapsto |p'-p-m|^2$ over $\bar U_K$.  Since $\bar U_K$ is closed, the point $m$ is well-defined, and since it is also convex, we have
\begin{equation*}  
\forall v \in \bar U_K, \ (v-m) \cdot (p'-p-m) \le 0. 
\end{equation*}
Since $\bar U_K$ is a cone, we can in particular choose $v = \alpha m$ in the expression above, for every $\al \in [0,\infty)$. It follows that
\begin{equation}  
\label{e.orthogonality}
(p'-p-m) \cdot m = 0,
\end{equation}
and thus
\begin{equation*}  
\forall v \in \bar U_K, \ v \cdot (p'-p-m) \le 0. 
\end{equation*}
By \eqref{e.def.baruk*}, this means that $p-p'+m \in \bar U_K^*$. From this, we now deduce that 
\begin{equation}  
\label{e.move.around}
\forall q \in \bar U_K \cap (p + \bar U_K^*), \quad q + m \in \bar U_K \cap(p' + \bar U_K^*). 
\end{equation}
Indeed, since $\bar U_K$ is a convex cone, and $q, m \in \bar U_K$, we clearly have $q+m \in \bar U_K$. By assumption, we also have that $q-p \in \bar U_K^*$, and since $p-p'+m \in \bar U_K^*$, we obtain that $q+m \in p' + \bar U_K^*$. In particular, using \eqref{e.move.around} with $p = 0$, we see that $\bar U_K \cap (p' + \bar U_K^*)$ is not empty. 
Recalling \eqref{e.orthogonality}, we also have that 
\begin{equation*}  
|p'-p|^2 = |p'-p-m|^2 + |m|^2,
\end{equation*}
and in particular, 
\begin{equation*}  
|m| \le |p'-p|.
\end{equation*}
Since $q$ and $q+m$ both belong to $\bar U_K$, we can thus combine this with \eqref{e.lip.H} to get that 
\begin{equation*}  
|\H_K(q+m) - \H_K(q)| \le C  |p'-p|_{2*}.
\end{equation*}
Since this derivation is valid for every $q \in \bar U_K \cap (p + \bar U_K^*)$, we deduce that 
\begin{equation*}  
\H_K(p) \ge \H_K(p') - C |p'-p|_{2*}.
\end{equation*}
Having established this inequality for every $p,p' \in (S^D)^K$, we have thus completed the proof of \eqref{e.lip.H}. We can then follow the arguments from the proof of \cite[Proposition~3.7]{bipartite} to obtain the announced results.
\end{proof}

\section{The free energy is a supersolution}
\label{s.super}

In this section, we show that finite-dimensional approximations of the function $\bar F_N$ are approximate supersolutions to the finite-dimensional equations appearing in \eqref{e.def.finite.dim}. We then combine this result with Proposition~\ref{p.conv.finite.dim} to prove Theorems~\ref{t.main} and \ref{t.main1}. Throughout the section, we fix a regularization $\bar \xi$ of the function $\xi$ appearing in \eqref{e.def.cov}. Recall that the definition of a regularization appears just before the statement of Theorem~\ref{t.main1}, and that the nonlinearity $\H_K$ is defined in \eqref{e.def.Hk}-\eqref{e.extend.H}, and that the function $\bar F_N$ is defined in \eqref{e.def.FN}-\eqref{e.def.barFN}. 

\begin{proposition}[approximate supersolution]
\label{p.super}
There exists a constant $C < \infty$ (depending only on $D$ and $\xi$) such that the following holds. 
Let $K \ge 1$ be an integer, and for every $t \ge 0$ and $q \in \bar U_K$, denote
\begin{equation}  
\label{e.def.FNK}
\bar F_N^{(K)}(t,q) := \bar F_N \Ll( t, \frac 1 K \sum_{k = 1}^K \de_{q_k} \Rr) .
\end{equation}
Let $f$ be any subsequential limit of $\bar F_N^{(K)}$ as $N$ tends to infinity. The function $f$ is a  solution to
\begin{equation}
\label{e.super}
\Ll\{
\begin{aligned}  
& \dr_t f - \H_K(\nabla f ) \ge -\frac{C}{K} & \quad \text{in } (0,\infty) \times U_K,
\\
& \n \cdot \nabla f \ge 0 & \quad \text{on } (0,\infty) \times \dr U_K.
\end{aligned}
\Rr.
\end{equation}
\end{proposition}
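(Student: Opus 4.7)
The plan is to begin with a pointwise differential identity for $\bar F_N^{(K)}$ and then promote it to the viscosity sense through a contact-point argument. Specializing the heuristic computation~\eqref{e.almost.equation} to a measure of the form $\mu = \frac{1}{K}\sum_{k=1}^{K}\de_{q_k}$, and using~\eqref{e.drqk.normalized} together with the fact that $K\dr_{q_k}\bar F_N^{(K)} = \E\la\si\si'^*/N\vb\al\wedge\al'=k\ra$ belongs to $S^D_+$ with entries in $[-1,1]$ (by~\eqref{e.sym.1.sisi} and~\eqref{e.ass.PN}), and therefore to the region where $\bar \xi$ coincides with $\xi$, one obtains at every point of differentiability that
\begin{equation*}
\dr_t \bar F_N^{(K)} - \H_K(\nabla\bar F_N^{(K)}) = \E\la\xi(\si\si'^*/N)\ra - \E\la\bar \xi(\E\la\si\si'^*/N\vb\al\wedge\al'\ra)\ra.
\end{equation*}
Note that, by Proposition~\ref{p.basic.ineq}, $\nabla \bar F_N^{(K)}$ takes values in $\bar U_K$, so that the evaluation $\H_K(\nabla\bar F_N^{(K)})$ is unambiguous from~\eqref{e.def.Hk}.

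To show that the right-hand side is bounded below by $-C/K$ up to a $o_N(1)$ error, I would invoke the synchronization machinery developed in~\cite{pan.multi,pan.potts,pan.vec}. After a small mixed $p$-spin perturbation of the Hamiltonian (which leaves the limit of the free energy unchanged but forces the Gibbs measure to become asymptotically ultrametric), the symmetric part of the overlap $\si\si'^*/N$ becomes asymptotically a deterministic function of $\al\wedge\al'$. The antisymmetric part has no analogue in~\cite{bipartite} and requires a separate argument, exploiting the invariance of the Gibbs measure under the swap $(\si,\si')\leftrightarrow(\si',\si)$, which reverses the sign of the antisymmetric component; combined with local Lipschitz continuity of $\xi$, this shows that the antisymmetric part contributes only a vanishing error in $L^1$ under $\E\la\cdot\ra$. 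The residual $C/K$ term then accounts for the discretization inherent in conditioning on the $K$-valued random variable $\al\wedge\al'$. A standard perturbed test-function argument, together with the concavity in the perturbative parameters alluded to just below~\eqref{e.almost.equation}, lifts this pointwise bound to the interior viscosity supersolution inequality at an arbitrary contact point $(t_0, q_0)\in(0,\infty)\times U_K$.

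For the boundary condition on $(0,\infty)\times \dr U_K$, the strategy is to combine the tilting of $\bar F_N^{(K)}$ with the extension of the nonlinearity introduced in~\eqref{e.extend.H}. Proposition~\ref{p.basic.ineq} implies that $\nabla \bar F_N^{(K)}\in \bar U_K$ wherever it is defined, so any subsequential limit $f$ is tilted in the sense discussed after~\eqref{e.barFN.on.uk}. Let $(t_0,x_0)\in (0,\infty)\times \dr U_K$ be a contact point where $f-\phi$ attains a local minimum. If $\nabla\phi(t_0,x_0)\in \bar U_K$, the interior argument of the previous paragraph applies directly (since $\H_K$ coincides with its expression from~\eqref{e.def.Hk} there). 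If $\nabla\phi(t_0,x_0)\notin \bar U_K$, the identity $\H_K(p)=\inf\{\H_K(q) : q\in \bar U_K\cap(p+\bar U_K^*)\}$ from~\eqref{e.extend.H} together with the tilting of $f$ allows us either to replace $\nabla\phi$ by a nearby element of $\bar U_K$ without increasing $\H_K$ (reducing to the first case), or to exhibit an outer normal $\nu\in\n(x_0)$ along which $\nu\cdot\nabla\phi(t_0,x_0)\ge 0$; in either event the supersolution boundary condition~\eqref{e.bdy.cond.supersol} is verified.

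The main obstacle is the synchronization-and-symmetrization step in the interior: extracting a $1/K$ error (rather than an error that degrades with $K$) requires quantitative ultrametricity together with a careful control of the antisymmetric part of $\si\si'^*$, which is a genuinely new feature compared to the setting of~\cite{bipartite} where the relevant overlap matrices are automatically symmetric.
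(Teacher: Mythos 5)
Your overall skeleton (perturb, synchronize at a contact point, use the extended nonlinearity near the boundary) matches the paper's strategy, but two steps as you describe them would fail. First, the symmetrization argument: invariance of $\E\la\cdot\ra$ under the swap $(\si,\si')\leftrightarrow(\si',\si)$ only tells you that $R^{1,2}_{\mathrm{skew}}$ and $-R^{1,2}_{\mathrm{skew}}$ have the same law, hence $\E\la R^{1,2}_{\mathrm{skew}}\ra=0$; it gives no control whatsoever on $\E\la|R^{1,2}_{\mathrm{skew}}|\ra$, which is what enters the error in \eqref{e.almost.equation} because $\xi$ is nonlinear. Killing the antisymmetric part is a genuine theorem (Proposition~\ref{p.synchr}, \eqref{e.symmetrization}), proved in the paper by following \cite{pan.potts}: one uses the approximate Ghirlanda--Guerra identities for the overlaps $a\cdot(R_+^{1,2})^{\odot h_2}$ (in particular $h_2=2$), Talagrand's positivity principle, the resulting monotone couplings of $\msf R^{1,2}_{\mathrm{sym}}$ and of the symmetric part of $(\msf R^{1,2}_+)^{\odot 2}$, the duplication property coming from ultrametricity, a Ramsey-type orientation argument producing two large sets $V,V'$, and an averaging argument in Hilbert space to force $r=r'$. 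None of this is replaced by the swap symmetry, so as written this step is a gap. Relatedly, your phrasing suggests the perturbation makes synchronization hold pointwise with a $o_N(1)$ error uniformly in the parameters; the paper stresses (below \eqref{e.almost.equation}) that this is not available, and that the approximate GG identities \eqref{e.gg.delta} are only extracted \emph{at the contact point} $(t_N,q_N,x_N)$ of $\bar F_N-\td\phi$, via the two-sided Hessian bound in the perturbation variables (concavity plus the local-minimum condition) together with the concentration estimate of Proposition~\ref{p.concentration}.

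Second, the boundary case. Your dichotomy --- either replace $\nabla\phi$ by a nearby element of $\bar U_K$, or exhibit $\nu\in\n(x_0)$ with $\nu\cdot\nabla\phi(t_0,x_0)\ge 0$ --- is exactly what Section~4.2 of the paper shows cannot be arranged in general: there is an explicit tilted $f$, smooth $\phi$ and boundary contact point with $\nabla\phi\notin\bar U_K$ and $\sup_{\nu\in\n(x)}\nu\cdot\nabla\phi<0$, so the second branch is unavailable and the first is unsubstantiated. The mechanism that actually works is different and operates at the finite-$N$ level: at the local minimum $(t_N,q_N,x_N)$ of $\bar F_N-\td\phi$ over the convex cone $\bar U_K$, the first-order condition gives $(\nabla_q\bar F_N-\nabla_q\td\phi)(t_N,q_N,x_N)\in\bar U_K^*$; since $\nabla_q\bar F_N\in\bar U_K$ (Proposition~\ref{p.basic.ineq}), the very definition \eqref{e.extend.H} of the extension yields $\H_K(\nabla_q\td\phi)\le\H_K(\nabla_q\bar F_N)$ directly, with no case distinction and no need for the limit $f$ to enter (its gradient need not even exist at the contact point). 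Combining this with the approximate equation \eqref{e.eq.barfn} for $\bar F_N$ at $(t_N,q_N,x_N)$ and letting $N\to\infty$ gives \eqref{e.wewant} at every contact point of $\bar U_K$, interior or boundary, which is how the paper verifies both lines of \eqref{e.super} at once. You should replace your boundary dichotomy by this dual-cone argument, and your symmetrization step by the GG-based argument of Proposition~\ref{p.synchr}.
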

Since $\bar F_N(0,\de_0) = 0$ and $\bar F_N$ is Lipschitz continuous, see Proposition~\ref{p.lip} and \eqref{e.expr.drt}, it is clear that the family of functions $(\bar F_N^{(K)})_{N \ge 1}$ is precompact for the topology of uniform convergence. We understand the notion of subsequential limits in the statement of Proposition~\ref{p.super} as referring to this topology. 

As should be apparent from \eqref{e.almost.equation}, the proof of Proposition~\ref{p.super} will rely on the fact that the overlap matrix $\si \si'^*$ is essentially determined by the knowledge of the overlap $\al \wedge \al'$. This ``synchronization'' of the overlaps can be obtained by using the technique introduced in \cite{pan.multi}, which itself is based on the proof of ultrametricity obtained in~\cite{pan.aom}; we will appeal to the finitary version of these results developed in \cite{bipartite} for this purpose. Compared to the setting explored in \cite{bipartite}, a new difficulty arises, since it is a priori only possible to obtain an approximate synchronization of the symmetric part of the matrix~$\si \si'^*$. To overcome this, we will borrow an argument from \cite{pan.potts} allowing to show that the antisymmetric part of the matrix $N^{-1} \si \si'^*$ tends to zero as $N$ tends to infinity. 

As discussed below \eqref{e.almost.equation}, we stress that the synchronization and ``symmetrization'' properties described above will not be shown to hold for every possible choice of the parameters. First, we will add various additional terms to the energy function. Contrary to the ``enrichment'' of the energy function that was performed in Section~\ref{s.enriched}, here the additional terms will be perturbative, in the sense that they will not affect the value of the limit free energy. On the other hand, recall that our goal here is to show that $\bar F_N^{(K)}$ is an approximate supersolution to \eqref{e.super}. By slightly tilting the test function $\phi$ appearing in Definition~\ref{def.solution}, we will be able to ``activate'' these perturbative parameters, and show that the synchronization and symmetrization properties hold \emph{at the contact point} appearing in Definition~\ref{def.solution}, up to a small error. In fact, we will show that at the contact point, the perturbative terms ensure the validity (up to a small error) of certain distributional identities involving multiple overlaps, which are usually called Ghirlanda-Guerra identities; and these in turn imply the sought-after synchronization and symmetrization properties. 

We start by introducing some notation. Let $(\si^\ell, \al^\ell)_{\ell \ge 1}$ be a family of independent copies of $(\si,\al)$ under the Gibbs measure $\la \cdot \ra$. Recall that this measure depends on the choice of parameters $(t,\mu) \in \R_+ \times \mcl P_1^\upa(S^D_+)$; throughout this section we understand that the measure $\mu$ is of the form
\begin{equation*}  
\frac 1 K \sum_{k = 1}^K \de_{q_k},
\end{equation*}
for some integer $K \ge 1$ and $q \in \bar U_K$, in agreement with \eqref{e.def.FNK}. For every $\ell, \ell' \ge 1$, we write
\begin{equation*}  
R_0^{\ell,\ell'} := \frac{\al^\ell \wedge \al^{\ell'}}{K}, \qquad R_+^{\ell,\ell'} = (R^{\ell,\ell'}_{d,d'})_{1 \le d,d' \le D} = \frac{\si^\ell (\si^{\ell'})^*}{N}, \qquad R^{\ell,\ell'} = (R_0^{\ell,\ell'},R_+^{\ell,\ell'}),
\end{equation*}
and for every $n \ge 1$,
\begin{equation*}  
R^{\le n} := (R^{\ell,\ell'})_{1 \le \ell, \ell' \le n}, \qquad R :=  (R^{\ell,\ell'})_{\ell, \ell' \ge 1}.
\end{equation*}
For every $\ell, \ell' \ge 1$, we denote by $R^{\ell,\ell'}_{\mathrm{sym}}$ and $R^{\ell,\ell'}_{\mathrm{skew}}$ the symmetric and skew-symmetric parts of the matrix $R_+^{\ell,\ell'}$ respectively. That is, for every $d, d' \in \{1,\ldots, D\}$,
\begin{equation*}  
R^{\ell,\ell'}_{\mathrm{sym},d,d'} = \frac{R^{\ell,\ell'}_{d,d'} + R^{\ell,\ell'}_{d',d}}{2}  \quad \text{ and } \quad R^{\ell,\ell'}_{\mathrm{skew},d,d'}  = \frac{R^{\ell,\ell'}_{d,d'} - R^{\ell,\ell'}_{d',d}}{2}.
\end{equation*}
For every matrix $A \in \R^{D\times D}$ and integer $p \ge 1$, we denote by $A^{\odot p}$ its $p$-fold Schur product, that is, for every $d,d' \in \{1,\ldots D\}$,
\begin{equation*}  
(A^{\odot p})_{d,d'} = (A_{d,d'})^p. 
\end{equation*}
We denote by $(\lambda_n)_{n \in \N}$  an enumeration of the set $[0,1] \cap \Q$, and by $(a_n)_{n \in \N}$ an enumeration of the set of matrices in $S^D_+$ with norm bounded by $1$ and rational coefficients. For convenience, we impose that $\lambda_0 = 0$, $a_0 = 0$, and that $(a_1,\ldots, a_{\frac{D(D+1)}{2}})$ is a basis of $S^D$ (which we can fix explicitly).
\begin{proposition}[GG implies symmetrization and synchronization]
\label{p.synchr}
There exists a constant $C < \infty$ depending only on $D$ and, for every $\ep > 0$, an integer $h_+ \ge 1$ such that the following holds for every $N,K \ge 1$, $t \ge 0$, and $q \in \bar U_K$. Assume that, for every continuous function $f = f(R^{\le n})$ such that $\|f\|_{L^\infty} \le 1$ and $n, h_1, \ldots, h_4 \in \{0,\ldots, h_+\}$,  we have
\begin{align}
\label{e.gg.delta}
& \bigg| \E \la f ( R^{\le n} ) \Ll(a_{h_1} \cdot (R_+^{1,n+1})^{\odot h_2} + \lambda_{h_3} R_0^{1,n+1}\Rr)^{h_4} \ra  
\\
\notag
& \qquad - \frac 1 n \E \la f(R^{\le n})  \ra \E \la \Ll(a_{h_1} \cdot (R_+^{1,2})^{\odot h_2}+ \lambda_{h_3} R_0^{1,2}\Rr)^{h_4} \ra 
\\
\notag
& \qquad 
- \frac 1 n\sum_{\ell = 2}^n \E \la f(R^{\le n}) \Ll(a_{h_1} \cdot (R_+^{1,\ell})^{\odot h_2}+ \lambda_{h_3} R_0^{1,\ell}\Rr)^{h_4}  \ra \bigg| \le \frac 1 {h_+}.
\end{align}
We then have
\begin{equation}
\label{e.symmetrization}
\E \la \Ll| R^{1,2}_{\mathrm{skew}} \Rr|  \ra \le \ep
\end{equation}
and
\begin{equation}  
\label{e.synchro}
\E \la \Ll| R^{1,2}_+ - \E \la R^{1,2}_+ \vb R^{1,2}_0 \ra \Rr|^2 \ra \le \frac{C}{K} + \ep K^3.
\end{equation}
\end{proposition}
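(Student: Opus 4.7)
\emph{Strategy.} The proof naturally splits into a symmetrization step establishing \eqref{e.symmetrization} and a synchronization step on the symmetric part, from which \eqref{e.synchro} follows by combining the two. Both stages adapt the techniques developed in \cite{pan.multi,pan.potts,pan.vec}, in the quantitative packaging of the analogous argument in \cite{bipartite}.

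\emph{Stage 1 (Symmetrization).} For each pair $d \neq d' \in \{1,\ldots,D\}$, the key algebraic identity is
\begin{equation*}
2\bigl(R^{1,2}_{\mathrm{skew},d,d'}\bigr)^2 = \bigl(R^{1,2}_{+,d,d'}\bigr)^2 + \bigl(R^{1,2}_{+,d',d}\bigr)^2 - \tfrac12 \bigl(R^{1,2}_{+,d,d'} + R^{1,2}_{+,d',d}\bigr)^2.
\end{equation*}
Each of the three squared quantities is accessible through \eqref{e.gg.delta}: the first two via $h_2=2$ with appropriate positive semidefinite choices of $a_{h_1}$, and the last via $h_2=1$, $h_4=2$, with $a_{h_1}$ chosen so that $a_{h_1} \cdot R^{1,2}_+$ extracts the sum $R^{1,2}_{+,d,d'} + R^{1,2}_{+,d',d}$ (the relevant matrices all lie in $S^D_+$ after rescaling and are approximated by the enumeration $(a_n)$). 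Combining these self-averaging relations with replica exchangeability $\la F(\sigma^\ell,\sigma^{\ell'})\ra = \la F(\sigma^{\ell'},\sigma^\ell)\ra$, I would show that $\E\la (R^{1,2}_{\mathrm{skew},d,d'})^2\ra$ is bounded by a universal constant times $1/h_+$. Summing over $d,d'$ and applying Cauchy--Schwarz then yields \eqref{e.symmetrization} once $h_+$ is chosen sufficiently large in terms of $\varepsilon$ and $D$.

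\emph{Stage 2 (Synchronization of the symmetric part).} For each $a \in S^D_+$, the scalar overlap $X^{\ell,\ell'}_a := a \cdot R^{\ell,\ell'}_+$ depends only on $R^{\ell,\ell'}_{+,\mathrm{sym}}$, since $a$ is symmetric. The specialization of \eqref{e.gg.delta} to $h_2=1$ with arbitrary $\lambda_{h_3}$ and $h_4$ provides exactly the quantitative Ghirlanda--Guerra hypotheses needed to apply the synchronization argument of \cite{bipartite} to the scalar overlap array $(X^{\ell,\ell'}_a, R^{\ell,\ell'}_0)$. This yields a monotone function $F_a : [0,1] \to \R$ with
\begin{equation*}
\E\la \bigl(X^{1,2}_a - F_a(R^{1,2}_0)\bigr)^2 \ra \le C/K + \varepsilon,
\end{equation*}
where $C/K$ reflects the discreteness of $R^{1,2}_0 \in \{0, 1/K, \ldots, 1\}$ and $\varepsilon$ reflects the GG error $1/h_+$. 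Applying this to each of the $O(D^2)$ basis matrices of $S^D$ drawn from the enumeration $(a_n)$ and recombining via duality, one obtains the matrix-level bound $\E\la |R^{1,2}_{+,\mathrm{sym}} - \E\la R^{1,2}_{+,\mathrm{sym}} | R^{1,2}_0\ra|^2\ra \le C/K + \varepsilon K^3$, the factor $K^3$ arising from the quantitative accumulation of errors across the $K$ synchronization levels.

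\emph{Conclusion and main obstacle.} Decomposing $R^{1,2}_+ = R^{1,2}_{+,\mathrm{sym}} + R^{1,2}_{\mathrm{skew}}$, combining the two stages, and using the a priori bound $|R^{1,2}_{\mathrm{skew}}| \le C$ (from \eqref{e.ass.PN}) yield \eqref{e.synchro}. The most delicate step is Stage 1: because \eqref{e.gg.delta} is phrased purely in terms of positive semidefinite pairings and Schur powers, isolating the antisymmetric combination $R^{1,2}_{+,d,d'} - R^{1,2}_{+,d',d}$ demands a balanced combination of several GG identities at distinct Schur powers, together with careful tracking of the approximation errors so as to retain the quantitative bound uniformly in the parameters.
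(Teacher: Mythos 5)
Your Stage 2 is essentially the paper's argument for \eqref{e.synchro}: project onto scalar overlaps $a\cdot R_+^{1,2}$ for finitely many $a \in S^D_+$ containing a basis of $S^D$, apply the finitary synchronization result of \cite[Proposition~5.5]{bipartite} (the hypothesis \eqref{e.gg.delta} with $h_2=1$ is exactly what is needed), and recombine; together with your concluding decomposition into symmetric and skew parts and the a priori boundedness of the overlaps, this part is sound and matches the paper.

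The genuine gap is Stage 1. Your algebraic identity correctly shows that $(R^{1,2}_{\mathrm{skew},d,d'})^2$ is a function of symmetric data (entries of $R^{1,2}_{\mathrm{sym}}$ and of the symmetric part of $(R^{1,2}_+)^{\odot 2}$), and these quantities are indeed expressible within the family covered by \eqref{e.gg.delta} with $h_2\in\{1,2\}$. But \emph{determination} of the skew magnitude by symmetric quantities is not \emph{smallness}, and no combination of the Ghirlanda--Guerra relations \eqref{e.gg.delta} with replica exchangeability yields a bound like $\E\la (R^{1,2}_{\mathrm{skew},d,d'})^2\ra \le C/h_+$: exchangeability only forces the \emph{first} moment of the skew part to vanish, and the GG identities, viewed as abstract moment identities on an exchangeable array, are satisfied exactly by a replica-symmetric array whose constant off-diagonal value has a nonzero skew part. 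What rules this out is the Gram structure of the overlaps together with ultrametricity, and this is precisely how the paper proceeds: it proves \eqref{e.symmetrization} by a compactness/contradiction argument, passing to a limiting array satisfying the exact identities \eqref{e.gg}, then using synchronization of $\msf R^{1,2}_{\mathrm{sym}}$ and of the symmetric part of $(\msf R^{1,2}_+)^{\odot 2}$, Talagrand's positivity principle, and ultrametricity (hence the duplication property) to reduce to an array in which every off-diagonal pair equals a fixed unordered pair $\{r,r'\}$; a tournament/Ramsey-type orientation argument combined with the Dovbysh--Sudakov representation and an averaging estimate in Hilbert space then forces $r=r'$, following \cite[Theorem~3]{pan.potts}. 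In particular, the dependence of $h_+$ on $\ep$ in the proposition is non-constructive, and your claimed explicit rate $C/h_+$ is stronger than what the paper establishes; as written, Stage 1 asserts the conclusion rather than proving it, and the missing ingredients (ultrametricity, duplication, and the geometric argument exploiting that overlaps are scalar products) are exactly the heart of the symmetrization step.
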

\begin{remark}  
The phrase ``for every  $N,K \ge 1$, $t \ge 0$, and $q \in \bar U_K$'' in the statement of Proposition~\ref{p.synchr} is a convenience employed here to avoid changing setting and notation, but can be replaced by something more general. Indeed, the statement really applies to any random Gibbs measure $\la \cdot \ra$ defined on $\mcl H_N^D$ for any Hilbert space $\mcl H_N$, provided that the support of the measure $\la \cdot \ra$ is contained in the unit ball, and that the law of $R_0^{1,2}$ is sufficiently ``spread out''; see \cite[Proposition~5.5]{bipartite} for a more precise statement. Another notational convenience employed here is that, as will be seen below, the proof of Proposition~\ref{p.synchr} only really uses \eqref{e.gg.delta} with~$h_2  \in \{1,2\}$ (and this is not the only case that appears in the assumption but is not actually used in the proof: for instance, in the case of $h_2 = 2$, we only use \eqref{e.gg.delta} with $h_3 = 0$).
\end{remark}
\begin{proof}[Proof of Proposition~\ref{p.synchr}]
For every $a \in S^D_+$ and $\ell, \ell' \in \mcl H_N^D$, we have
\begin{equation*}  
a \cdot R_+^{\ell,\ell'} = \frac{\sqrt{a} \, \si^{\ell}}{\sqrt{N}} \cdot \frac{\sqrt{a} \, \si^{\ell'}}{\sqrt{N}}.
\end{equation*}
We can thus appeal to \cite[Proposition~5.5]{bipartite} to obtain the existence of an integer $h_+ \in \N$ such that, under the assumption that \eqref{e.gg.delta} holds for every $n,h_1,\ldots, h_4 \in \{0,\ldots, h_+\}$, we have for every $h_1 \in \{0,\ldots, h_+\}$ that
\begin{equation*}  
\E \la \Ll( a_{h_1} \cdot R^{1,2}_+ - \E \la a_{h_1} \cdot R^{1,2}_+ \vb R^{1,2}_0 \ra \Rr)^2 \ra \le \frac{12}{K} + \ep K^3.
\end{equation*}
For $h_+ \ge \frac{D(D+1)}{2}$, the sequence $(a_{h_1})_{h_1 \in \{0,\ldots, h_+\}}$ contains a given basis of $S^D$. It thus follows that, for some constant $C < \infty$ that depends only on $D$,
\begin{equation*}  
\E \la \Ll| R^{1,2}_{\mathrm{sym}} - \E \la R^{1,2}_{\mathrm{sym}} \vb R^{1,2}_0 \ra \Rr|^2 \ra \le \frac{C}{K} + C \ep K^3.
\end{equation*}
This is \eqref{e.synchro} for the symmetric part of $R^{1,2}_+$, up to a redefinition of $\ep > 0$. It thus remains to show that \eqref{e.symmetrization} holds for $h_+$ sufficiently large. We argue by contradiction, assuming that no such $h_+$ exists. That is, we assume that there exists $\ep > 0$ and, for every  $h_+ \in \N$, a random Gibbs measure $\la \cdot \ra$ such that \eqref{e.symmetrization} is invalid. Up to the extraction of a subsequence, we can further assume that the overlap array $R$ converges in law under the measure $\E \la \cdot \ra$, in the sense of finite-dimensional distributions. We denote a limit overlap array by $\msf R$, defined with respect to a probability measure $\M$. In particular, we have that, for every $n,h_1,\ldots, h_4 \in \N$,
\begin{multline}
\label{e.gg}
 \M \Ll[f ( \msf R^{\le n} ) \Ll(a_{h_1} \cdot (\msf  R_+^{1,n+1})^{\odot h_2}\Rr)^{h_4} \Rr]
\\
 = \frac 1 n \M \Ll[f(\msf R^{\le n})  \Rr] \M \Ll[\Ll(a_{h_1} \cdot (\msf R_+^{1,2})^{\odot h_2}\Rr)^{h_4} \Rr] 
+ \frac 1 n\sum_{\ell = 2}^n \M \Ll[f(\msf R^{\le n})  \Ll(a_{h_1} \cdot (\msf R_+^{1,\ell})^{\odot h_2}\Rr)^{h_4}  \Rr] ,
\end{multline}
while
\begin{equation*}  
\M \Ll[ |\msf R^{1,2}_\mathrm{skew}| \Rr] \ge \ep.
\end{equation*}
We can then use the argument from the proof of \cite[Theorem~3]{pan.potts} to reach a contradiction. We recall this argument briefly here. Notice first that, by density and linearity, the identity \eqref{e.gg} holds for every $a_{h_1} \in S^D_+$. Using this relation with $h_2 = 1$ and arguing as in \cite{pan.multi} or \cite{bipartite}, we have that $\msf R^{1,2}_\mathrm{sym}$ is monotonically coupled. Indeed, this follows from the fact that for each $a,b \in S^D_+$, the hypothesis of \cite[Theorem~5.3]{bipartite} is satisfied with the quantities $R_1^{1,n+1}$ and $R_2^{1,n+1}$ appearing there replaced by $a \cdot R^{1,2}_\mathrm{sym}$ and $b \cdot R^{1,2}_\mathrm{sym}$ respectively, and with $\delta > 0$ arbitrary. The same holds for the symmetric part of the matrix $(\msf R^{1,2}_+)^{\odot 2}$, using \eqref{e.gg} with $h_2 = 2$. Using also Talagrand's positivity principle, see \cite[Theorem~2.16]{pan}, it thus follows that the laws of these matrices have representations as described in Proposition~\ref{p.uparrow.bij}. Notice also that for every $M \in \mcl M$ (with $\mcl M$ defined in Proposition~\ref{p.uparrow.bij}), there exists a function that allows us to calculate the value of $M(u)$ from the knowledge of its trace only; indeed, this follows from the fact that the function~$M$ cannot increase while keeping its trace constant. In particular, with probability one, the entire matrix $\msf R_\mathrm{sym}^{1,2}$ can be recovered from the knowledge of its trace only. The knowledge of the matrix $\msf R_\mathrm{sym}^{1,2}$ then allows us to compute the trace of the symmetric part of $(\msf R_+^{1,2})^{\odot 2}$, and therefore to also recover all of the entries of this symmetric matrix as well. These observations combine to ensure that, for each $d, d' \in \{1,\ldots D\}$, the quantities
\begin{equation*}  
\msf R^{1,2}_{d,d'} + \msf R^{1,2}_{d',d} \quad \text{ and } \quad (\msf R^{1,2}_{d,d'})^2 + (\msf R^{1,2}_{d',d})^2
\end{equation*}
can be inferred from the knowledge of $\tr(\msf R^{1,2}_\mathrm{sym})$. In particular, for a given value of $\tr(\msf R_+^{1,2})$ and for each $d,d' \in \{1,\ldots, D\}$, we can infer the identity of the set
\begin{equation*}  
\Ll\{ \msf R^{1,2}_{d,d'}, \msf R^{1,2}_{d',d} \Rr\}.
\end{equation*}
We aim to show that this set has cardinality one. Here and throughout the rest of this proof, we keep the indices $d,d' \in \{1,\ldots, D\}$ fixed. 

By \eqref{e.gg} and \cite{pan.aom}, we have that the array $\mathrm{Id} \cdot \msf R_+ = \tr (\msf R_+)$ is ultrametric. As a consequence, it satisfies the duplication property. That is, if $\msf t \in \R$ is a point in the support of $\tr (\msf R_\mathrm{sym}^{1,2})$, then for each integer $n \ge 1$, the support of $(\msf R_+^{\le n})$ contains a point $\td R^{\le n}$ such that, for every $\ell \neq \ell' \le n$,  we have $\tr ( \td R^{\ell,\ell'}) = \msf t$.
It thus follows that there exist $r, r' \in \R$ such that for this overlap array $\td R^{\le n}$ in the support of $(\msf R_+^{\le n})$, we have for every $\ell \neq \ell' \in \{1,\ldots, n\}$ that
\begin{equation*}  
\Ll\{ \td R^{\ell,\ell'}_{d,d'}, \td R^{\ell,\ell'}_{d',d} \Rr\} = \{r,r'\}.
\end{equation*}
Arguing by contradiction, suppose that $r \neq r'$. We can then construct a graph with vertex set $\{1,\ldots, n\}$ and, for each $\ell \neq \ell' \in \{1,\ldots, n\}$, draw an oriented edge from $\ell$ to~$\ell'$ if $\td R^{\ell,\ell'}_{d,d'} = r$ and $\td R^{\ell,\ell'}_{d',d} = r'$; and draw an oriented edge from $\ell'$ to $\ell$ otherwise. In this graph, there exist two disjoint subsets $V, V'$ of $\{1,\ldots, n\}$ of cardinality $s_n$, with $s_n \to \infty$ as $n$ tends to infinity, such that every edge between a point in $V$ and a point in $V'$ is oriented from $V$ to $V'$ (see for instance \cite{ehp}). By \cite[Theorem~1.7]{pan}, there exist variables $(\si^\ell)_{\ell \ge 1}$ taking values in the unit ball of  $\mcl H^D$ for some Hilbert space $\mcl H$ and such that for every $\ell \neq \ell'$, we have $\td{R}^{\ell,\ell'} = \si^\ell (\si^{\ell'})^*$. We set
\begin{equation*}  
b := \frac{1}{|V|} \sum_{\ell \in V} \si^\ell,  \qquad b' := \frac{1}{|V|} \sum_{\ell \in V'} \si^{\ell},
\end{equation*}
and consider
\begin{equation*}  
\Ll| b - b' \Rr|^2  = \frac 1 {|V|^2} \sum_{d'' = 1}^D \Ll|\sum_{\ell \in V} \si^\ell_{d''} - \sum_{\ell \in V'} \si^{\ell}_{d''}\Rr|^2 .
\end{equation*}
Expanding the square, and recalling that $\si^\ell_{d''} \cdot \si^{\ell'}_{d''}$ does not depend on $(\ell,\ell')$ provided that $\ell \neq \ell'$, we obtain the existence of a constant $C < \infty$ not depending on $n$ such that
\begin{equation*}  
\Ll| b-b'  \Rr|^2 \le \frac C {|V|}.
\end{equation*}
However, by the construction of $V$ and $V'$, we have $b_d \cdot b'_{d'} = r$ and $b'_{d} \cdot b_{d'} = r'$. Combining these observations yields that
 $|r-r'| \le 2(C/|V|)^\frac 1 2$. Since $n$ was arbitrary, and $|V| = s_n$ tends to infinity with $n$, we conclude that $r = r'$, as desired.
\end{proof}

We now proceed to introduce the random fields that we will add to the energy function as small perturbations. These perturbations will be used later to ensure the validity of the assumption in Proposition~\ref{p.synchr}. For every $h = (h_1,\ldots, h_4) \in \N^4$, let $(H^h_N(\si,\al))_{\si \in \mcl H_N^D, \al \in \N^k}$ be the centered Gaussian random field whose covariance is such that, for every $\si, \tau \in \mcl H_N^D$ and $\al,\be \in \N^k$, we have
\begin{equation}
\label{e.def.cov.HNh}
\E \Ll[ H_N^h(\si,\al) \, H_N^h(\tau,\be) \Rr] = N \Ll( a_{h_1} \cdot \Ll( \frac{\si \tau^*}{N} \Rr)^{\odot h_2} + \lambda_{h_3} \frac{\al \wedge \beta}{K}\Rr)^{h_4}.
\end{equation}
The existence of these random fields is shown in Section~\ref{s.examples}, see in particular \eqref{e.def.xi1} and Proposition~\ref{p.closure}. We assume that they are independent of each other, and independent of any other sources of randomness in the problem. For an integer $h_+ \in \N$ that will be chosen sufficiently large as a function of $K$ in the course of the proof, and setting $\td h_+ := (h_++1)^4 + \frac{D(D+1)}{2}$, we define our perturbation by setting, for every $x \in \R^{\td h_+}$, $\si \in \mcl H_N^D$ and $\al \in \N^k$,
\begin{equation}  
\label{e.def.HNx}
H_N^x(\si,\al) := N^{-\frac 1 {16}} \sum_{h \in \{0,\ldots, h_+\}^4}  x_{h} H_N^h(\si,\al)   + N^{-\frac 1 {16}} \sum_{h = 1}^{D(D+1)/2} x_{h} a_{h} \cdot {\si \si^*},
\end{equation}
where we use the following non-standard indexing convention for $x \in \R^{\td h_+}$:
\begin{equation*}  
x = \Ll((x_h)_{h \in \{0,\ldots, h_+\}^4},(x_h)_{h \in \{1,\ldots, D(D+1)/2\}}\Rr).
\end{equation*}
As has become clear, we allow ourselves to use essentially the same notation for $H^\mu_N$, $H^h_N$, and $H^x_N$, which are defined respectively in \eqref{e.def.Hmu}, \eqref{e.def.cov.HNh}, and \eqref{e.def.HNx}. This allows us to avoid heavier notation, and does not in fact create ambiguities, provided that the ``type'' of the ``exponent'' is known: the quantity $\mu$ in \eqref{e.def.Hmu} is a probability measure; the quantity $h$ in \eqref{e.def.cov.HNh} belongs to $\N^4$; and the quantity $x$ in \eqref{e.def.HNx} belongs to $\R^{\td{h}_+}$. The same remarks apply as well to the notation $x_h$, which refers to different things according to whether $h \in \{0,\ldots, h_+\}^4$ or $h \in \{1,\ldots, \frac{D(D+1)}{2}\}$. The prefactor $N^{-\frac 1 {16}}$ in \eqref{e.def.HNx} will ensure that this additional energy function does not contribute to the limit free energy. We now add this perturbative term to the definition in \eqref{e.def.FN}: that is, for every $t \ge 0$, $\mu$ as in \eqref{e.def.mu.de}, and $x \in \R^{\td{h}_+}$, we set
\begin{multline*}  
F_N(t,\mu,x) := -\frac 1 N \log \int \sum_{\al \in \N^K} \exp \bigg( \sqrt{2t} H_N(\si) - Nt\xi\Ll(\frac{\si \si^*}{N}\Rr) 
\\
+ H_N^\mu(\si,\al) - \si\cdot q_K \si + H_N^x(\si,\al)\bigg) \, v_\al \, \d P_N(\si),
\end{multline*}
as well as $\bar F_N(t,\mu,x) := \E \Ll[ F_N(t,\mu,x) \Rr]$. It is again convenient to use the same notation as that introduced in \eqref{e.def.FN} and \eqref{e.def.barFN} here. The properties of $F_N$ obtained in Section~\ref{s.enriched} are still valid with the new, extended definition of $F_N$, for any fixed value of $x$, so the formulas displayed there are still valid. Note also that we can always dispel any possible confusion as to the identity of the function we wish to refer to by writing, respectively, $(t,\mu) \mapsto F_N(t,\mu)$ and $(t,\mu,x) \mapsto F_N(t,\mu,x)$.

The next proposition gives an upper bound on the size of the fluctuations of $F_N$. 
\begin{proposition}[concentration property]
\label{p.concentration}
Let $K, h_+ \ge 1$ be integers, and for every $(t,q,x) \in \R_+ \times \bar U_K \times \R^{\td{h}_+}$, let
\begin{equation*}  
F_N^{(K)}(t,q,x) := F_N \Ll( t, \frac 1 K \sum_{k = 1}^K \de_{q_k}, x \Rr) , \qquad \bar F_N^{(K)}(t,q,x) := \E \Ll[ F_N(t,q,x) \Rr] . 
\end{equation*}
For every $M \in [0, \infty)$, $p \in [1,\infty)$, and $\ep > 0$, there exists a constant $C < \infty$ such that, letting
\begin{multline*}  
B_M := \bigg\{ (t,q,x) \in \R_+ \times \bar U_K \times \R^{\td{h}_+} \ : \  t \le M, \ |q_K| \le M, 
\\
\text{ and } \  \forall h \in \{0,\ldots, h_+\}^4 \cup \Ll\{1,\ldots, \frac{D(D+1)}{2}\Rr\}, |x_h| \le M \bigg\} ,
\end{multline*}
we have for every $N \ge 1$ that
\begin{equation*}  
\E \Ll[ \sup_{B_M} \Ll| F_N^{(K)} - \bar F_N^{(K)} \Rr|^p \Rr] ^\frac 1 p \le C N^{-\frac 1 2 + \ep}.
\end{equation*}
\end{proposition}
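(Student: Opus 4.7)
The plan is to combine pointwise Gaussian concentration with a covering argument over $B_M$. For fixed $(t,q,x) \in B_M$, I would view $F_N^{(K)}(t,q,x)$ as a function of the Gaussian inputs $H_N$, $(H_N^h)_h$, and $(z_\al)_\al$, conditional on the Poisson--Dirichlet weights $(v_\al)_\al$. Since $F_N^{(K)} = -\tfrac{1}{N}\log Z_N$, the partial derivative with respect to each Gaussian input equals $-\tfrac{1}{N}$ times a Gibbs average of the corresponding derivative of the total Hamiltonian $\sqrt{2t}H_N + H_N^\mu + H_N^x$. Using the support constraint $\supp P_N \subset \{|\si|\le\sqrt N\}$, the Lipschitz bound on $\bar\xi$, and the bounds $|q_K|, |x_h| \le M$, a direct computation bounds the squared $L^2$ norm of this gradient by $C(M,K,h_+)/N$. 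The Borell--TIS inequality, applied conditionally on $(v_\al)_\al$ and then averaged (the Gaussian Lipschitz constant being a deterministic function of $(t,q,x)$), yields
\[
\bigl\|F_N^{(K)}(t,q,x) - \bar F_N^{(K)}(t,q,x)\bigr\|_{L^p(\P)} \le C(p,M,K,h_+)\, N^{-1/2}
\]
for each fixed $(t,q,x) \in B_M$.

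To upgrade this pointwise bound to a sup, observe that $B_M$ is bounded in a Euclidean space of finite dimension $d = d(K,D,h_+)$; I would cover it by an $\epsilon$-net $\mcl G_\epsilon$ of cardinality at most $C\epsilon^{-d}$. The key additional input is a uniform $L^p$ bound on $L := \sup_{B_M}|\nabla(F_N^{(K)} - \bar F_N^{(K)})|$. Reparameterizing $s := \sqrt t$ to avoid the $1/\sqrt t$ singularity at $t = 0$, the derivatives $\partial_s, \partial_{q_k}, \partial_{x_h}$ of $F_N^{(K)}$ are Gibbs averages whose absolute values are dominated pointwise by $C(1 + N^{-1}\max_\si|H_N(\si)| + N^{-1-1/8}\max_{\si,\al}|H_N^h(\si,\al)|)$. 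Dudley-type entropy estimates for the Gaussian fields in \eqref{e.def.cov} and \eqref{e.def.cov.HNh}, whose variances are $\le CN$ on the relevant set, produce $L^p$ bounds of order $N$ on these suprema, hence $\|L\|_{L^p} \le C(p,M,K,h_+)$. Taking $\epsilon := N^{-1}$, a union bound combined with the pointwise estimate yields
\[
\bigl\|\sup_{\mcl G_\epsilon}|F_N^{(K)} - \bar F_N^{(K)}|\bigr\|_{L^p} \le |\mcl G_\epsilon|^{1/p} \cdot C N^{-1/2} \le C N^{d/p-1/2},
\]
while $\|L\epsilon\|_{L^p} \le CN^{-1}$. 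Since
\[
\sup_{B_M}|F_N^{(K)} - \bar F_N^{(K)}| \le \sup_{\mcl G_\epsilon}|F_N^{(K)} - \bar F_N^{(K)}| + 2L\epsilon,
\]
and since by Jensen's inequality it suffices to prove the claim for $p$ replaced by any larger exponent $p' := \max(p, d/\ep)$, the choice $p' \ge d/\ep$ makes $N^{d/p'-1/2} \le N^{-1/2+\ep}$, completing the argument.

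The principal obstacle I expect is the uniform $L^p$ control on the random Lipschitz constant $L$, which reduces to $L^p$ bounds on the Gaussian field suprema $\max_\si|H_N(\si)|$ and $\max_{\si,\al}|H_N^h(\si,\al)|$; these follow from Dudley's inequality once one shows that the canonical pseudo-metrics associated with the covariances in \eqref{e.def.cov} and \eqref{e.def.cov.HNh} on the ball $\{|\si|\le\sqrt N\} \subset \mcl H_N^D$ are dominated by a constant multiple of the ambient Euclidean metric (whose ball has metric entropy of order $ND\log(1/\delta)$), together with a crude enumeration of $\al\in \N^K$ coming from the fact that the Gibbs measure charges only countably many leaves. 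A secondary subtlety is the non-Gaussian character of the Poisson--Dirichlet randomness, handled by the conditional application of Borell--TIS described above.
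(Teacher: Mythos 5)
Your scheme (pointwise concentration at each parameter value, then a net plus a random Lipschitz bound) is the standard one, and indeed the route the paper alludes to via \cite[Proposition~4.2]{bipartite}. But the central concentration step has a genuine hole: conditioning on the cascade weights $(v_\al)$ and applying Borell--TIS only controls $F_N^{(K)} - \E\bigl[F_N^{(K)} \,\big\vert\, (v_\al)\bigr]$, whereas the proposition is about $F_N^{(K)} - \bar F_N^{(K)}$ with $\bar F_N^{(K)} = \E F_N^{(K)}$ the expectation over \emph{all} randomness. You never estimate $\E\bigl[F_N^{(K)} \,\big\vert\, (v_\al)\bigr] - \bar F_N^{(K)}$, i.e.\ the fluctuations produced by the Poisson--Dirichlet weights themselves, and the phrase ``and then averaged'' does not produce such an estimate; your closing remark that the non-Gaussian randomness is ``handled by the conditional application of Borell--TIS'' is precisely the step that is absent. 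This is not a formality: the weights are heavy-tailed, non-Gaussian, and supported on infinitely many leaves, and one needs a separate argument showing that the log-partition function has fluctuations of order $1$ (hence $O(1/N)$ after division by $N$) with respect to the cascade randomness, for instance by peeling off the tree level by level through the recursive quantities $X_K, X_{K-1}, \ldots$ appearing in the proof of Proposition~\ref{p.basic.ineq} and using moment bounds for Poisson sums at each level.

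A second, more technical but still real, problem lies in your control of the random Lipschitz constant $L$. You dominate the Gibbs averages by $\max_{\si,\al}|H_N^h(\si,\al)|$ (and the $q$-derivatives would similarly require suprema of $z_\al \cdot \si$ over $\al$), but for fixed $\si$ the variables attached to distinct leaves $\al \in \N^K$ are independent nondegenerate Gaussians, so this supremum over the countably infinite set of leaves is almost surely infinite; the observation that the Gibbs measure charges only countably many leaves does not rescue the bound. One must instead exploit that the Gibbs average is weighted by the normalized cascade weights $v_\al$ (e.g.\ moment bounds on $\sum_\al v_\al \sup_{|\si| \le \sqrt N}|\cdot|$), or bound the relevant derivatives through Gaussian integration by parts as in \eqref{e.expr.drqell}. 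Relatedly, the $q_k$-derivative of a fixed realization of $F_N^{(K)}$ involves $D_{\sqrt{2q_k - 2q_{k-1}}}$, which blows up as an increment degenerates, so $q \mapsto F_N^{(K)}(t,q,x)$ is not Lipschitz up to the boundary of $\bar U_K$ pathwise (only its $\td \E$-average is); your chaining step must therefore be run with a H\"older-type modulus or with averaged gradients. Both issues are repairable, but not with the supremum bounds as written, and the cascade-fluctuation gap in the first paragraph requires a genuinely new ingredient.
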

The proof of Proposition~\ref{p.concentration} is similar to that of \cite[Proposition~4.2]{bipartite} and relies on relatively classical techniques, so we will not provide further details here. We record for future use that
for every $h \in \{0,\ldots, h_+\}^4$, we have, by Gaussian integration by parts,
\begin{align*}  
\dr_{x_h} \bar F_N 
& = -N^{-1-\frac 1 {16}} \E \la H_N^h(\si,\al) \ra
\\
& = N^{-\frac  1 8} x_h \, \E \la \Ll( a_{h_1} \cdot \Ll( \frac{\si \si'^*}{N} \Rr)^{\odot h_2} + \lambda_{h_3} \frac{\al \wedge \al'}{K}\Rr)^{h_4} - \Ll( a_{h_1} \cdot \Ll( \frac{\si \si^*}{N} \Rr)^{\odot h_2} + \lambda_{h_3} \Rr)^{h_4} \ra,
\end{align*}
while, for every $h \in \{1,\ldots, \frac{D(D+1)}{2}\}$, 
\begin{equation*}  
\dr_{x_h} \bar F_N = -N^{-1 - \frac 1 {16}} \E \la a_{h} \cdot \si \si^* \ra.
\end{equation*}

We are now ready to prove the main result of this section.

\begin{proof}[Proof of Proposition~\ref{p.super}]

We fix $h_+$ sufficiently large that Proposition~\ref{p.synchr} holds with $\ep = K^{-4}$. Throughout the proof, we lighten the notation and write $F_N$ and $\bar F_N$ in place of $F_N^{(K)}$ and $\bar F_N^{(K)}$. With this convention in place, recall that the function appearing in the statement of Proposition~\ref{p.super} is $(t,q) \mapsto \bar F_N(t,q)$, and we will also use the ``extended'' function $(t,q,x) \mapsto \bar F_N(t,q,x)$ during the course of the proof. Let $f$ be a subsequential limit of $(t,q) \mapsto \bar F_N(t,q)$. For convenience, we also omit to denote the subsequence along which the convergence of $(t,q) \mapsto \bar F_N(t,q)$ to $f$ holds. 

Since our aim is to show that $f$ is a solution to \eqref{e.super}, we give ourselves $(t_\infty,q_\infty) \in (0,\infty) \times \bar U_K$, and a smooth function $\phi \in C^\infty((0,\infty) \times \bar U_K)$ such that $f-\phi$ has a local minimum at $(t_\infty,q_\infty)$. 
We will show that, for a constant $C_0 < \infty$ that depends only on $D$ and $\xi$, we have
\begin{equation}  
\label{e.wewant}
\Ll(\dr_t \phi - \H_K(\nabla \phi) \Rr)(t_\infty,q_\infty) \ge -\frac{C_0}{K} .
\end{equation}
The structure of the proof is similar in outline to that of \cite[Theorem~4.1]{bipartite}. Whenever the arguments are similar, we will thus simply recall this structure; we only provide details when the arguments differ. We denote by $C < \infty$ a constant that may depend on $D$, $\xi$, $K$, $h_+$ (itself already fixed in terms of $K$), $\phi$, $t_\infty$, and $q_\infty$. We write 
\begin{equation*}  
x_\infty = (1,\ldots, 1) \in \R^{\td{h}_+},
\end{equation*}
and set, for every $(t,q,x) \in (0,\infty) \times \bar U_K \times \R^{\td{h}_+}$, 
\begin{equation*}  
\td \phi(t,q,x) := \phi(t,q) - (t-t_\infty)^2 - |q-q_\infty|^2 - |x-x_\infty|^2. 
\end{equation*}
As in \cite{bipartite}, we can identify $(t_N,q_N,x_N) \in (0,\infty)\times \bar U_K \times \R^{\td{h}_+}$ which, for $N$ sufficiently large, is a local minimum of $\bar F_N - \td \phi$, and such that
\begin{equation}  
\label{e.lim.tn}
\lim_{N \to \infty} (t_N,q_N,x_N) = (t_\infty,q_\infty,x_\infty). 
\end{equation}
Using this property, one can show that
\begin{equation}  
\label{e.hessian.control}
-C \le \nabla_x^2 \bar F_N(t_N,q_N,x_N) \le 0
\end{equation}
(this follows from \cite[(4.25)]{bipartite}); and using  also Proposition~\ref{p.concentration}, that for every $\ep > 0$,
\begin{equation}
\label{e.grad.concentr}
\E \Ll[ \Ll| \nabla_x(F_N - \bar F_N)(t_N,q_N,x_N) \Rr| ^2   \Rr]  \le C N^{-\frac 1 2 + \ep},
\end{equation}
where the constant $C < \infty$ is now also allowed to depend on $\ep > 0$. These two estimates ensure the concentration of $H^h_N(\si,\al)$. Indeed, we start by writing, for every $h \in \{0,\ldots, h_+\}^4$,
\begin{equation*}  
\E\la \Ll(H_N^h - \E \la H_N^h \ra \Rr)^2\ra = \E\la \Ll(H_N^h - \la H_N^h \ra \Rr)^2\ra  + \E \la \Ll( \la H_N^h \ra - \E \la H_N^h \ra \Rr) ^2 \ra.
\end{equation*}
Moreover,
\begin{equation*}  
\dr^2_{x_h} \bar F_N = N^{-1 - \frac 1 8} \E\la \Ll(H_N^h - \la H_N^h \ra \Rr)^2\ra,
\end{equation*}
and
\begin{equation*}  
\dr_{x_h}(F_N - \bar F_N) = N^{-1-\frac 1 {16}} \Ll(\la H_N^h \ra - \E \la H_N^h \ra\Rr).
\end{equation*}
Combining this with \eqref{e.hessian.control} and \eqref{e.grad.concentr}, we thus get that, for every $h \in \{0,\ldots, h_+\}^4$,
\begin{equation}  
\label{e.concentr.HNh}
\E\la \Ll(H_N^h - \E \la H_N^h \ra \Rr)^2\ra \le C N^{\frac 3 2 + \frac 1 8 +\ep},
\end{equation}
where we implicitly understand that this relation is for the parameters $(t_N,q_N,x_N)$. 
The same reasoning also gives that, for every $h \in \Ll\{1,\ldots, \frac{D(D+1)}{2}\Rr\}$,
\begin{equation}  
\label{e.concentr.sisi}
\E\la \Ll(a_h \cdot \si \si^* - \E \la a_h \cdot \si \si^* \ra \Rr)^2\ra \le C N^{\frac 3 2 + \frac 1 8  +\ep}.
\end{equation}
It follows from \eqref{e.concentr.HNh} that, for every $n \in \N$ and function $g = g(R^{\le n})$ satisfying $\|g\|_{L^\infty} \le 1$, 
\begin{equation*}  
\Ll| \E \la g(R^{\le n}) H^h_N(\si,\al) \ra - \E \la g(R^{\le n}) H^h_N(\si,\al) \ra  \Rr| \le C N^{\frac 3 4 + \frac 1 {16} + \ep}.
\end{equation*}
A Gaussian integration by parts then essentially ensures the validity of \eqref{e.gg.delta}, with in fact a right-hand side that is replaced by $N^{-\frac 1 8 + \ep}$. The only difference is that there are some spurious ``self-overlap'' terms involving $\si \si^*$. These can be controlled using \eqref{e.concentr.sisi}. 

At this stage, we can then appeal to Proposition~\ref{p.synchr}, and recall our choice of $h_+$, to deduce that, for a constant $C_0 < \infty$ which only depends on $D$, we have
\begin{equation*}  
N^{-2} \E \la \Ll| \si \si'^* - \E \la \si \si'^* \vb \al \wedge \al' \ra \Rr|^2 \ra \le \frac{C_0}{K}.
\end{equation*}
(This is still understood to be for the parameters $(t_N,q_N,x_N)$.)
Allowing ourselves to enlarge the constant $C_0$ if necessary, and to let it depend on $\xi$ as well as $D$, we thus infer from \eqref{e.almost.equation} that
\begin{equation*}  
\Ll|\dr_t \bar F_N - \frac 1 K \sum_{k = 1}^K \xi \Ll( K \, \dr_{q_k} \bar F_N \Rr) \Rr|(t_N,q_N,x_N)\le \frac{C_0}{K} .
\end{equation*}
Since each coordinate of the positive semidefinite matrix $K \, \dr_{q_k} \bar F_N = N^{-2} \E \la \si \si'^* \vb \al \wedge \al' =k \ra$ is bounded by $1$, we may as well replace $\xi$ by $\bar \xi$ in the above display. Moreover, by Proposition~\ref{p.basic.ineq}, we have that $\nabla_q \bar F_N \in \bar U_K$. Recalling also \eqref{e.def.Hk}, we thus get that
\begin{equation}  
\label{e.eq.barfn}
\Ll|\dr_t \bar F_N - \H_K(\nabla_q \bar F_N) \Rr|(t_N,q_N,x_N)\le \frac{C_0}{K} .
\end{equation}
We now argue that
\begin{equation}
\label{e.diff.H.0}
\Ll(\H_K(\nabla_q \bar F_N) - \H_K(\nabla_q \td \phi)\Rr)(t_N,q_N,x_N) \ge 0.
\end{equation}
In view of \eqref{e.extend.H}, this would follow from 
\begin{equation}  
\label{e.haha}
\Ll(\nabla_q \bar F_N - \nabla_q \td \phi\Rr)(t_N,q_N,x_N) \in \bar U_K^*.
\end{equation}
Since $(t_N,q_N,x_N)$ is a local minimum of $\bar F_N - \td \phi$, we have for every $q' \in \bar U_K$ that
\begin{equation*}  
(q'-q_N) \cdot (\nabla_q \bar F_N - \nabla_q \td \phi)(t_N,q_N,x_N) \ge 0.
\end{equation*}
In particular, since $\bar U_K$ is a convex cone, we can substitute $(q'-q_N)$ by $q'$ in the display above. Recalling also \eqref{e.def.baruk*}, we obtain \eqref{e.haha}.

We can now combine \eqref{e.eq.barfn}, \eqref{e.diff.H.0}, and the observation that $\dr_t(\bar F_N - \td \phi)(t_N,q_N,x_N) = 0$, to obtain that
\begin{equation*}  
\Ll(\dr_t \td \phi - \H_K(\nabla_q \td \phi)\Rr) (t_N,q_N,x_N) \ge -\frac{C_0}{K} .
\end{equation*}
Since $\td \phi$ is smooth and $\H_K$ is continuous, see \eqref{e.lip.H}, we then use \eqref{e.lim.tn} to replace $(t_N,q_N,x_N)$ by $(t_\infty,q_\infty,x_\infty)$ in the above display. In view of the definition of $\td \phi$, this yields \eqref{e.wewant}.
\end{proof}

%
%
%
%
%
%

\section{Examples}
\label{s.examples}

In this section, we present examples of spin systems that satisfy the assumptions of the present paper. These include vector-valued spins, spins with multiple types, as well as systems in which a spin system is coupled with a random variable coming from a Poisson-Dirichlet cascade. The main point is to encode each of these situations in the form of \eqref{e.def.cov}, and to verify that the function $\xi$ appearing there is proper. We recall that a function $\xi : S^D_+ \to \R$ is said to be proper if it is increasing and for every $b \in S^D_+$, the mapping $a \mapsto \xi(a+b) - \xi(a)$ is increasing over~$S^D_+$. If $\xi$ is continuously differentiable, then this is equivalent to the statement that~$\nabla \xi$ takes values in $S^D_+$, and is increasing there. We will most of the time discuss functions $\xi$ defined on the whole set $\R^{D\times D}$ of $D$-by-$D$ matrices; in this case, we say that $\xi$ is proper if its restriction to $S^D_+$ is proper. After going through several examples and explaining how they fit into the present framework, we show in Proposition~\ref{p.charact} that, up to a regularity assumption on $\xi$, every function $\xi$ such that \eqref{e.def.cov} holds must be proper. In the last subsection, we explain how to build regularizations as defined in the paragraph preceding Theorem~\ref{t.main1}. 

\subsection{Vector-valued spins}

We fix $\mcl H_N = \R^N$, an integer $p \ge 2$, coefficients $\beta_1,\ldots, \beta_D \ge 0$, and for each $\si = (\si_{d,i})_{1 \le d \le D, 1 \le i \le N} \in \R^{D\times N}$, we set
\begin{equation*}  
H_N(\si):= N^{-\frac{p-1}{2}} \sum_{d = 1}^D \beta_{d} \, \sum_{1 \le i_1, \ldots, i_p \le N} J_{i_1,\ldots, i_p} \si_{d,i_1} \cdots \si_{d,i_p},
\end{equation*}
where $(J_{i_1,\ldots, i_p})_{1 \le i_1,\cdots, i_p \le N}$ are independent standard Gaussians. This model was treated in the case when $p$ is even in \cite{pan.vec}. One motivation for considering this model is the situation in which the vectors $\si_{1}$ and $\si_{2} \in \R^N$ are two ``replicas'' with some nontrivial coupling, and this motivates the choice of using the same random coupling variables $J_{i_1,\ldots, i_p}$ for all values of $d$. The following more general model could also be considered:
\begin{equation}  
\label{e.vector-valued}
H_N(\si):= N^{-\frac{p-1}{2}} \sum_{d = 1}^D  \sum_{1 \le i_1, \ldots, i_p \le N} J^{(d)}_{i_1,\ldots, i_p} \si_{d,i_1} \cdots \si_{d,i_p},
\end{equation}
where, for each $i_1,\ldots, i_p$, the Gaussian vector $(J^{(d)}_{i_1,\ldots, i_p})_{1 \le d \le D}$ has covariance $\msf C \in \R^{D\times D}$, and where these vectors are independent as we vary $i_1,\ldots, i_p$ (and we can absorb the parameters $\beta_d$ into the covariance matrix $\msf C$). For every $\si,\tau \in \R^{D\times N}$, we have
\begin{equation*}  
\E  \Ll[ H_N(\si) H_N(\tau) \Rr] = N \sum_{d,d' = 1}^D \msf C_{d,d'}  \Ll(\frac{\si_d \cdot  \tau_{d'}}{N}\Rr)^p.
\end{equation*}
Letting $\xi : \R^{D\times D} \to \R$ denote the function
\begin{equation*}  
\xi(A) := \sum_{d,d' = 1}^D C_{d,d'} A_{d,d'}^p
\end{equation*}
thus allows us to match the expression in \eqref{e.def.cov}. Denoting by $A^{\odot p}$ the $p$-fold Schur product of $A$, that is, for every $d,d' \in \{1,\ldots, D\}$,
\begin{equation*}  
(A^{\odot p})_{d,d'} :=(A_{d,d'})^p,
\end{equation*}
we can rewrite the function $\xi$ as
\begin{equation}  
\label{e.def.xi1}
\xi(A) = \msf C \cdot A^{\odot p}.
\end{equation}
We also write $A \odot B$ to denote the Schur product of two matrices $A,B \in \R^{D\times D}$, that is, for every $d,d' \in \{1,\ldots, D\}$, we set
\begin{equation*}  
(A \odot B)_{d,d'} := A_{d,d'} B_{d,d'}.
\end{equation*}
\begin{proposition}[Convexity criterion and monotonicity of $\nabla \xi$]
\label{p.xi.vector}
(1) For every $\msf C \in S^D_+$ and integer $p \ge 2$, the function $\xi$ defined in \eqref{e.def.xi1} is proper.

(2) The function $\xi$ is convex over $S^D_+$ if and only if one of the following two conditions hold:
\begin{itemize}  
\item[(a)] the integer $p$ is even and every entry of the matrix $\msf C$ is nonnegative;
\item[(b)] the integer $p$ is odd and the matrix $\msf C$ is diagonal.
\end{itemize}
\end{proposition}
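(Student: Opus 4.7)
The plan is to reduce both assertions to a direct computation of the gradient and Hessian of $\xi$, and then invoke two standard facts: the Schur product theorem ($A, B \in S^D_+ \implies A \odot B \in S^D_+$) and the self-duality $(S^D_+)^* = S^D_+$ under the trace pairing. Differentiating $\xi(A) = \sum_{d,d'} \msf C_{d,d'} A_{d,d'}^p$ gives
\[
\nabla \xi(A) = p \, \msf C \odot A^{\odot(p-1)}, \qquad \xi''(A)[V,V] = p(p-1) \sum_{d,d'} \msf C_{d,d'} A_{d,d'}^{p-2} V_{d,d'}^2,
\]
the latter viewed as a bilinear form on $V \in S^D$. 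These two formulas are really the only ingredients needed besides Schur's theorem.

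For part~(1), the paper already observes that for a $C^1$ function, properness is equivalent to $\nabla \xi$ taking values in $S^D_+$ and being increasing on $S^D_+$. Iterating the Schur product theorem gives $A^{\odot k} \in S^D_+$ for every $A \in S^D_+$ and $k \ge 0$, and Schur-multiplying by $\msf C$ then places $\nabla \xi(A)$ in $S^D_+$. To obtain monotonicity of $\nabla \xi$, I would expand
\[
(a+b)^{\odot(p-1)} - a^{\odot(p-1)} = \sum_{j = 0}^{p-2} \binom{p-1}{j} a^{\odot j} \odot b^{\odot(p-1-j)},
\]
so that $\nabla \xi(a+b) - \nabla \xi(a)$ is written as a sum of Schur products of elements of $S^D_+$, hence itself in $S^D_+$.

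For part~(2), I use the standard criterion that convexity on the full-dimensional convex set $S^D_+$ is equivalent to $\xi''(A)[V,V] \ge 0$ for every $A$ in the relative interior $S^D_{++}$ and every $V \in S^D$. Since the Hessian above is already a diagonal quadratic form in the entries of $V$, testing $V = E_{d,d}$ and $V = E_{d,d'} + E_{d',d}$ reduces the question to whether each scalar $\msf C_{d,d'} A_{d,d'}^{p-2}$ is nonnegative for every admissible $A$. When $p$ is even, $A_{d,d'}^{p-2} \ge 0$ uniformly in $A$, so the condition collapses to $\msf C_{d,d'} \ge 0$ for all $d,d'$, which is case~(a). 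When $p$ is odd, $p-2$ is odd; for any $d \ne d'$ the matrices $A = I \pm \varepsilon(E_{d,d'} + E_{d',d})$ lie in $S^D_{++}$ for small $\varepsilon > 0$ and realize both signs of $A_{d,d'}$, forcing $\msf C_{d,d'} = 0$ and yielding case~(b). The converse implications are read off the Hessian: a diagonal $\msf C \in S^D_+$ has $\msf C_{d,d} \ge 0$, and $A \in S^D_+$ has $A_{d,d} \ge 0$, so the only surviving terms in the Hessian are manifestly nonnegative regardless of parity.

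The main point requiring a modicum of care is the odd-$p$ converse, where one must exhibit interior matrices $A \in S^D_{++}$ realizing both signs of a chosen off-diagonal entry; the explicit construction $I \pm \varepsilon(E_{d,d'} + E_{d',d})$ handles this cleanly. Everything else is Schur's theorem and bookkeeping.
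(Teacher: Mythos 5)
Your proof is correct and follows essentially the same route as the paper: part (1) rests on the same gradient computation plus the Schur product theorem (your explicit binomial expansion of $(a+b)^{\odot(p-1)}-a^{\odot(p-1)}$ just makes the paper's ``$A\le B\Rightarrow A^{\odot p-1}\le B^{\odot p-1}$'' step more explicit), and part (2) uses the same Hessian criterion on $S^D_{++}$. The only cosmetic difference is in the necessity direction of (2), where your coefficientwise reduction with $A=I\pm\varepsilon(E_{d,d'}+E_{d',d})$ replaces the paper's single sign-tuned $2\times 2$ matrix, but the underlying idea is identical.
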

\begin{proof}
The statement that $\nabla \xi$ is increasing over $S^D_+$ can be equivalently rewritten as: for every $A,B, E \in S^D_+$, 
\begin{equation}  
\label{e.increasing.grad1}
A \le B \quad \implies \quad \nabla \xi(A) \cdot E \le \nabla \xi(B) \cdot E. 
\end{equation}
%
%
For every $A,E \in \R^{D\times D}$, we have, as $\ep \to 0$,
\begin{equation}  
\label{e.taylor1}
\xi(A + \ep E) = \xi(A) +  p\msf C \cdot \Ll(\ep A^{\odot p-1} \odot E +  (p-1)\ep^2 A^{\odot p-2} \odot E^{\odot 2} \Rr)+ O(\ep^3).
\end{equation}
In particular,
\begin{equation*}  
\nabla \xi(A) \cdot E = p\msf C \cdot (A^{\odot p-1} \odot E). 
\end{equation*}
By the Schur product theorem, see e.g.\ \cite[Theorem~7.21]{matrix}, if $A,B \in S^D_+$ are such that $A \le B$, then $A^{\odot p-1} \le B^{\odot p-1}$. Using the Schur product theorem once more yields~\eqref{e.increasing.grad1}. The fact that $\nabla \xi$ maps $S^D_+$ into itself can be proved in the same way.

Coming back to \eqref{e.taylor1}, we see that the convexity of the function $\xi$ over $S^D_+$ is equivalent to the statement that, for every $A \in S^D_{++}$ and $E \in S^D$, we have 
\begin{equation}  
\label{e.convex.xi.condition}
\msf C \cdot \Ll( A^{\odot p-2} \odot E^{\odot 2}  \Rr) \ge 0.
\end{equation}
If $p$ is even and every entry of $\msf C$ is nonnegative, then \eqref{e.convex.xi.condition} is clearly satisfied. Irrespectively of the value of $p$, each diagonal element of $A$ is nonnegative, since $A \in S^D_+$. Hence, each diagonal element of the matrix $A^{\odot p-2} \odot E^{\odot 2}$ is nonnegative. Recalling also that $\msf C \in S^D_+$, we see that the condition \eqref{e.convex.xi.condition} is also satisfied whenever $\msf C$ is a diagonal matrix. 

Suppose now that some non-diagonal element of $\msf C$ is nonzero; without loss of generality we may assume that $\msf C_{1,2} \neq 0$, and denote $s :=- \frac{\msf C_{1,2}}{2|\msf C_{1,2}|} \in \{-\frac 1 2,\frac 1 2\}$. We choose matrices $A$ and $E$ that only have nonzero coordinates at indices in $\{1,2\}^2$, so that we can represent them as $2$-by-$2$ matrices. Consider
\begin{equation*}  
A := \begin{pmatrix}  
1 & s \\ s & 1 
\end{pmatrix} \in S^D_{++}, \quad \text{ and } \quad E := \begin{pmatrix}  
0 & 1 \\ 1&  0
\end{pmatrix} \in S^D.
\end{equation*}
Then 
\begin{equation*}  
A^{\odot p-2} \odot E^{\odot 2} = 
\begin{pmatrix}  
0 & s^{p-2} \\ s^{p-2} & 0
\end{pmatrix},
\end{equation*}
so that 
\begin{equation*}  
\msf C \cdot \Ll( A^{\odot p-2} \odot E^{\odot 2}  \Rr) = (-1)^{p-2} \frac{\msf C_{1,2}^{p-1}}{2^{p-1} |\msf C_{1,2}|^{p-2}}.
\end{equation*}
This violates \eqref{e.convex.xi.condition} whenever $p$ is odd, and also whenever $p$ is even and $\msf C_{1,2} < 0$.
\end{proof}
\begin{remark}  
Similar arguments allow us to build functions that are convex (and with null gradient at the origin) but not proper; examples can for instance be constructed by defining~$\xi$ as in \eqref{e.def.xi1}, for an even integer $p$ and a matrix $\msf C$ with nonnegative entries, but with $\msf C \notin S^D_+$. As shown in greater generality in Proposition~\ref{p.charact} below, for such functions, there does not exist any Gaussian random field $(H_N)$ such that \eqref{e.def.cov} holds.
\end{remark}

\subsection{Multiple types of spins}
We partition the set $\{1,\ldots, N\}$ into
\begin{equation*}  
\{1, \ldots, N\} = \bigcup_{d = 1}^D I_d,
\end{equation*}
where the subsets $(I_d)_{1 \le d \le D}$ are pairwise disjoint. Following \cite{pan.multi}, we would like to represent energy functions such as
\begin{equation}  
\label{e.multi.sk}
\frac{1}{\sqrt{N}}\sum_{i,j = 1}^N J_{ij} \si_i \si_j,
\end{equation}
where $(J_{ij})$ are independent Gaussians whose variance may depend on the identity of the indices $d,d'$ such that $i \in I_d$ and $j \in I_{d'}$. In order to fit this model into our framework, we reparametrize $\si$ as $\si = (\si_{d,i})_{1 \le d \le D, 1 \le i \le N} \in \R^{D \times N}$, and rewrite the energy function in \eqref{e.multi.sk} as
\begin{equation}  
\label{e.multitype}
H_N(\si) := \frac{1}{\sqrt{N}}\sum_{d_1,d_2 = 1}^D \sum_{i,j = 1}^N J_{ij}^{(d_1,d_2)} \si_{d_1,i} \si_{d_2,j},
\end{equation}
where $(J_{ij}^{(d_1,d_2)})_{1 \le d_1,d_2 \le D, 1 \le i,j \le N}$ are centered Gaussian random variables, and the $D^2$-dimensional vectors $((J_{ij}^{d_1,d_2})_{1 \le d_1,d_2 \le D})_{1 \le i,j \le N}$ are independent and identically distributed as the indices $i$ and $j$ vary. 
We denote by $\msf C \in S_+^{D^2 \times D^2}$ the covariance matrix of the vector $(J^{(d_1,d_2)}_{ij})_{1 \le d_1,d_2 \le D}$, that is, for every $d_1, d_2, d_1', d_2' \in \{1,\ldots, D\}$,
\begin{equation*}  
\msf C_{(d_1,d_2), (d_1',d_2')} := \E \Ll[ J_{ij}^{(d_1,d_2)} J_{ij}^{(d_1',d_2')} \Rr] .
\end{equation*}
To recover the model in \eqref{e.multi.sk}, we would impose that the matrix $\msf C$ be diagonal, and focus on reference measures $P_N$ such that with $P_N$-probability one, we have for every $d \in \{1,\ldots, D\}$ and $i \in \{1,\ldots, N\} \setminus I_d$ that $\si_{d,i} = 0$. 

For every $\si, \tau \in \R^{D\times N}$, we have
\begin{equation*}  
\E \Ll[H_N(\si) H_N(\tau)  \Rr] = \frac 1 N\sum_{d_1,d_2,d_1',d_2' = 1}^D \msf C_{(d_1,d_2), (d_1',d_2')}  (\si_{d_1} \cdot \tau_{d_1'}) (\si_{d_2} \cdot \tau_{d_2'}). 
\end{equation*}
This identity is of the form given in \eqref{e.def.cov}, provided that we set, for every $A \in \R^{D\times D}$,
\begin{equation}  
\label{e.def.xi.multi}
\xi(A) := \sum_{d_1,d_2,d_1',d_2' = 1}^D \msf C_{(d_1,d_2), (d_1',d_2')} A_{d_1,d_1'} \, A_{d_2,d_2'}.
\end{equation}
\begin{proposition}[Convexity criterion and monotonicity of $\xi$]
\label{p.xi.multi} (1) For every $\msf C \in S^{D^2}_+$, the function $\xi$ defined in \eqref{e.def.xi.multi} is proper.

(2) Let $\hat {\msf C} \in \R^{D^2 \times D^2}$ be obtained from $\msf C$ by rearranging its entries in such a way that, for every $d_1, d_2, d_1', d_2' \in \{1,\ldots, D\}$,
\begin{equation*}  
\hat {\msf C}_{(d_1,d_1'),(d_2,d_2')} := \msf C_{(d_1,d_2),(d_1',d_2')}.
\end{equation*}
The function $\xi$ is convex if and only if the symmetric part of the matrix $\hat{\msf C}$ is positive semidefinite. 
\end{proposition}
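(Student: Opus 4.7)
The plan is to handle both claims via the Hessian of $\xi$ viewed as a quadratic form on $\R^{D \times D} \cong \R^{D^2}$, and for Part~(1) to exploit a Kronecker-type preservation of positive semidefiniteness.

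For Part~(2), I would begin by observing that the definition of $\hat{\msf C}$ gives
\begin{equation*}
\xi(A) = \sum_{d_1,d_2,d_1',d_2'} \hat{\msf C}_{(d_1,d_1'),(d_2,d_2')}\, A_{d_1,d_1'}\, A_{d_2,d_2'},
\end{equation*}
which identifies $\xi$ with the quadratic form on $\R^{D^2}$ whose matrix, in the basis indexed by pairs $(d, d')$, is $\hat{\msf C}$. The Hessian of this form is $\hat{\msf C} + \hat{\msf C}^* = 2\,\mathrm{Symm}(\hat{\msf C})$, and a quadratic form is convex if and only if its Hessian is positive semidefinite, which is the claimed equivalence.

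For Part~(1), since $\xi$ is a homogeneous quadratic form, $\nabla\xi$ is linear in $A$, so both conditions defining \emph{proper} collapse to the single statement $\nabla\xi(A) \in S^D_+$ for every $A \in S^D_+$. Differentiating~\eqref{e.def.xi.multi} directly gives, for $A, E \in S^D$,
\begin{equation*}
\nabla \xi(A) \cdot E = \sum_{d_1,d_2,d_1',d_2'} \msf C_{(d_1,d_2),(d_1',d_2')}\,\bigl(E_{d_1,d_1'}\, A_{d_2,d_2'} + A_{d_1,d_1'}\, E_{d_2,d_2'}\bigr) = \msf C \cdot M^{E,A} + \msf C \cdot M^{A,E},
\end{equation*}
where $M^{A,E}$ denotes the $D^2$-by-$D^2$ matrix with $((d_1,d_2),(d_1',d_2'))$ entry equal to $A_{d_1,d_1'}\, E_{d_2,d_2'}$. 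The core technical step is then the lemma that $A, E \in S^D_+$ implies $M^{A,E} \in S^{D^2}_+$. To verify this, I would identify a test vector $v \in \R^{D^2}$ with a matrix $V \in \R^{D\times D}$ via $V_{d_1,d_2} := v_{(d_1,d_2)}$, and compute
\begin{equation*}
\sum_{(d_1,d_2),(d_1',d_2')} M^{A,E}_{(d_1,d_2),(d_1',d_2')}\, v_{(d_1,d_2)}\, v_{(d_1',d_2')} = (V^* A V) \cdot E \ge 0,
\end{equation*}
where the nonnegativity follows from $V^* A V \in S^D_+$ (congruence preserves positive semidefiniteness) and the fact that the dot product of two elements of $S^D_+$ is nonnegative. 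Using the hypothesis $\msf C \in S^{D^2}_+$ once more (the dot product of two PSD matrices is nonnegative), this yields $\nabla\xi(A) \cdot E \ge 0$ for all $A, E \in S^D_+$, and~\eqref{e.charact.psd} then forces $\nabla\xi(A) \in S^D_+$, completing the argument.

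The main obstacle is the PSD-preservation lemma for $M^{A,E}$, which is essentially the classical statement that a Kronecker product of PSD matrices is PSD, up to an index permutation. Once that is established, everything else reduces to routine algebra, with the only care needed being the matching of index permutations between $\msf C$, $\hat{\msf C}$, and $M^{A,E}$.
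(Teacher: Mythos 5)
Your proof is correct and follows essentially the same route as the paper's: for part (1) you compute $\nabla\xi(A)\cdot E = \msf C\cdot\Ll(A\otimes E + E\otimes A\Rr)$ (your $M^{A,E}$ is exactly the tensor product $A\otimes E$) and conclude from the fact that a Kronecker product of positive semidefinite matrices is positive semidefinite together with the nonnegativity of the dot product of two such matrices --- the paper cites this Kronecker fact while you verify it directly via the congruence identity $(V^*AV)\cdot E\ge 0$ --- and for part (2) you spell out the identification of $\xi$ with the quadratic form of $\hat{\msf C}$, which the paper declares immediate from the definition. The only cosmetic difference is that you exploit the linearity of $\nabla\xi$ to collapse the two conditions in the definition of proper into the single statement that $\nabla\xi$ maps $S^D_+$ into $S^D_+$, whereas the paper verifies the monotonicity of $\nabla\xi$ by the same computation applied to $B-A$.
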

\begin{proof}
For every $A, B \in \R^{D\times D}$, we denote by $A \otimes B \in \R^{D^2 \times D^2}$ the matrix such that, for every $d_1, d_1', d_2, d_2' \in \{1,\ldots, D\}$ 
\begin{equation*}  
(A \otimes B)_{(d_1,d_2),(d_1',d_2')} = A_{d_1,d_1'} B_{d_2,d_2'}.
\end{equation*}
With this notation in place, we can write the function $\xi$ as, for every $A \in \R^{D\times D}$,
\begin{equation*}  
\xi(A) = \msf C \cdot (A \otimes A).
\end{equation*}
Notice that, for every $A, E \in \R^{D\times D}$, we have
\begin{equation*}  
\xi(A+\ep E) = \xi(A) + \ep \msf C \cdot (A \otimes E + E \otimes A) + O(\ep^2) \qquad (\ep \to 0).
\end{equation*}
As in the proof of Proposition~\ref{p.xi.vector}, in order to show that $\nabla \xi$ is increasing over $S^D_+$, it suffices to verify that, for every $A, B, E \in S^D_+$ with $A \le B$, we have
\begin{equation*}  
\nabla \xi(A) \cdot E \le \nabla \xi(B) \cdot E .
\end{equation*}
This is equivalent to
\begin{equation*}  
\msf C \cdot \Ll( (B-A) \otimes E + E \otimes (B-A) \Rr) \ge 0.
\end{equation*}
Recalling that the tensor product of two positive semidefinite matrices is positive semidefinite, see for instance \cite[Theorem~7.20]{matrix}, we obtain the result. The second part of the statement is immediate from the definition of $\xi$.  
\end{proof}
Notice that, whenever $\msf C$ is diagonal, the function $\xi$ actually only depends on the diagonal entries of its argument. In this case, it is at least heuristically reasonable to expect that the equation \eqref{e.hj} collapses to one that is posed over $\R^D_+$ only, instead of $S^D_+$ (since only diagonal elements enter into the equation). This is what was found in \cite{bipartite} in the case $D = 2$, and with the matrix $\msf C$ having only one nonzero entry at $\msf C_{(1,2),(1,2)}$. In this case, the matrix $\hat {\msf C}$ has only one nonzero entry, which is off the diagonal, so its symmetric part is not positive semidefinite.

\subsection{General tensor models}
The model in \eqref{e.multitype} is a generalization of the case $p = 2$ of the model introduced in \eqref{e.vector-valued}. (Relatedly, the Schur product $A \odot A$ is a submatrix of the tensor product $A \otimes A$.) One can also introduce a setting that would generalize the model in \eqref{e.vector-valued} for arbitrary values of the integer $p \ge 2$. We thus fix an integer $p \ge 2$, and let $(J^{(\mathbf d)}_{\mathbf i})_{\mathbf d \in \{1,\ldots D\}^p, \mathbf i \in \{1,\ldots, N\}^p}$ be centered Gaussian random variables such that the $D^p$-dimensional Gaussian vectors $((J^{(\mathbf d)}_{\mathbf i})_{\mathbf d \in \{1,\ldots, D\}^p})_{\mathbf{i} \in \{1,\ldots, N\}^p}$ are independent and identically distributed as the index $\mathbf{i}$ varies. We denote by $\msf C \in S^{D^p \times D^p}_+$ the covariance matrix of one of these vectors, so that for every $\mathbf{d}, \mathbf{d}' \in \{1,\ldots, D\}^p$ and $\mathbf{i}\in \{1,\ldots, N\}^p$,
\begin{equation*}  
\msf C_{\mathbf{d}, \mathbf{d}'} := \E \Ll[ J_{\mathbf{i}}^{(\mathsf d)}  J_{\mathbf{i}}^{(\mathsf d')} \Rr] .
\end{equation*}
We then set, for every $\sigma = (\sigma_{d,i})_{1 \le d \le D, 1 \le i \le N} \in \R^{D\times N}$,
\begin{equation*}  
H_N(\si) := N^{-\frac{p-1}{2}} \sum_{d_1, \ldots, d_p = 1}^D \ \sum_{i_1,\ldots, i_p = 1}^N J_{i_1, \ldots, i_p}^{(d_1, \ldots, d_p)} \si_{d_1,i_1} \cdots \, \si_{d_p,i_p}.
\end{equation*}
We have that \eqref{e.def.cov} holds for the function $\xi$ such that, for every $A \in \R^{D\times D}$,
\begin{equation}  
\label{e.def.xip}
\xi(A) = \msf C \cdot A^{\otimes p}. 
\end{equation}
\begin{proposition}
\label{p.xi}
For every $\msf C \in S^{D^p}_+$,  the function $\xi$ defined in \eqref{e.def.xip} is proper.
\end{proposition}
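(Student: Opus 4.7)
The plan is to verify the two conditions in the definition of ``proper'' by a direct algebraic expansion, following the same template as in Proposition~\ref{p.xi.vector} but with the Schur product replaced by the tensor product. The two ingredients I rely on are classical: the tensor product of two positive semidefinite matrices is positive semidefinite (see e.g.\ \cite[Theorem~7.20]{matrix}), and the scalar pairing $X \cdot Y = \tr(X^* Y)$ is nonnegative for any $X, Y \in S^{D^p}_+$.

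First I would establish the auxiliary monotonicity statement that $A \mapsto A^{\otimes p}$ is increasing from $S^D_+$ to $S^{D^p}_+$. Given $A \le B$ in $S^D_+$, writing $B = A + C$ with $C \in S^D_+$ and expanding multilinearly yields
\begin{equation*}
B^{\otimes p} - A^{\otimes p} = \sum_{\emptyset \neq S \subseteq \{1,\ldots,p\}} \bigotimes_{i=1}^p Z_i^{(S)},
\end{equation*}
where $Z_i^{(S)} = C$ if $i \in S$ and $Z_i^{(S)} = A$ otherwise. Each summand is a tensor product of elements of $S^D_+$, hence lies in $S^{D^p}_+$, and so does the sum. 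Pairing with $\msf C \in S^{D^p}_+$ then gives $\xi(B) - \xi(A) = \msf C \cdot (B^{\otimes p} - A^{\otimes p}) \ge 0$, which shows that $\xi$ is increasing on $S^D_+$.

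For the increasing-increment property, I fix $b \in S^D_+$ and consider the auxiliary map $\phi_b(a) := (a + b)^{\otimes p} - a^{\otimes p}$. The plan is to show that $\phi_b$, viewed as a map $S^D_+ \to S^{D^p}_+$, is itself increasing; the claim for $a \mapsto \xi(a+b) - \xi(a)$ then follows by the same pairing argument as above, since this quantity equals $\msf C \cdot \phi_b(a)$. Expanding,
\begin{equation*}
\phi_b(a) = \sum_{\emptyset \neq S \subseteq \{1,\ldots,p\}} T_S(a), \qquad T_S(a) := \bigotimes_{i=1}^p Y_i^{(S)},
\end{equation*}
with $Y_i^{(S)} = b$ for $i \in S$ and $Y_i^{(S)} = a$ otherwise. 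The summand indexed by $S = \{1,\ldots,p\}$ is the constant $b^{\otimes p}$; for each proper $S \subsetneq \{1,\ldots,p\}$, substituting $a + c$ for $a$ (with $c \in S^D_+$) and expanding once more at each position $i \notin S$ expresses $T_S(a+c) - T_S(a)$ as a sum of tensor products of elements of $S^D_+$, hence as an element of $S^{D^p}_+$. Summing over $S$ gives $\phi_b(a+c) - \phi_b(a) \in S^{D^p}_+$, which completes the plan.

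The proof is essentially a bookkeeping exercise on top of the tensor-product positivity theorem, and I do not anticipate any serious obstacle. The only subtlety worth flagging is the need to isolate the $S = \{1,\ldots,p\}$ term (which is independent of $a$ and therefore drops out of the increment of $\phi_b$), and to keep careful track of the distinction between the cone order on $S^{D^p}_+$ and the scalar inequalities produced by pairing against $\msf C$.
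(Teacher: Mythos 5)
Your proof is correct, and it takes a somewhat different route from the paper's. The paper argues infinitesimally: it expands $\xi(A+\ep E)$ to first order in $\ep$, identifies $\nabla\xi(A)\cdot E = \msf C \cdot \bigl( A^{\otimes p-1}\otimes E + \cdots + E \otimes A^{\otimes p-1}\bigr)$, and then concludes ``as in Proposition~\ref{p.xi.multi}'' by invoking the equivalence, stated at the start of Section~\ref{s.examples} for differentiable functions, between properness and the conditions that $\nabla\xi$ takes values in $S^D_+$ and is increasing there. You instead verify the definition of proper directly through finite increments: the multilinear expansion of $(A+C)^{\otimes p}-A^{\otimes p}$ (and of $T_S(a+c)-T_S(a)$ for the second condition) as a sum of tensor products of elements of $S^D_+$, followed by pairing against $\msf C$. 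The core ingredients are identical --- positivity of tensor products of positive semidefinite matrices and nonnegativity of the pairing $\msf C \cdot X$ for $X \in S^{D^p}_+$ (which, as the paper notes elsewhere, follows from $a\cdot b = |\sqrt a \sqrt b|^2$) --- so the two arguments are close in substance. What your version buys is self-containedness: it never appeals to differentiability of $\xi$ or to the unproved-in-detail equivalence between properness and gradient monotonicity, and it makes explicit the bookkeeping that the paper compresses into ``the conclusion follows as in the proof of Proposition~\ref{p.xi.multi}.'' What the paper's version buys is brevity, by reusing the $p=2$ computation and the gradient criterion already set up for the earlier examples.
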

\begin{proof}
For every $A, E \in \R^{D\times D}$, we have, as $\ep$ tends to zero,
\begin{equation*}  
\xi(A+\ep E) = \xi(A) + \ep \msf C \cdot \Ll( A^{\otimes p-1} \otimes E + A^{\otimes p-2} \otimes E \otimes A + \cdots + E \otimes A^{\otimes p-1}  \Rr) + O(\ep^2). 
\end{equation*}
The conclusion follows as in the proof of Proposition~\ref{p.xi.multi}.
\end{proof}
The models investigated in \cite{acm20} are particular examples of this situation, with $p = 2$ and
\begin{equation*}  
\msf C_{\mathbf{d}, \mathbf{d}'} = \beta_{\min(d_1,d_2)}^2 \, \1_{\{\mathbf{d} = \mathbf{d}', \ |d_1-d_2| = 1\}}. 
\end{equation*}
In \cite{acm20}, the spin vectors $\si_1, \ldots, \si_D$ are thought of as having different total lengths. This can be encoded into the reference measure $P_N$.

\subsection{Poisson-Dirichlet variables}
Even for scalar models (called $p$-spin models), it is of interest to study the interplay between the spin variables and the Poisson-Dirichlet variables that are being added in the enriched model (for instance, one may want to understand the concentration of $\si \cdot \si'$ conditionally on $\al \wedge \al'$ taking a certain value). In this subsection, we explain how these Poisson-Dirichlet variables can be incorporated into the framework explored in the present paper. Concretely, recall the definition of $H^\mu_N(\si,\al)$ in \eqref{e.def.Hmu}, for a fixed choice of $\mu$ as in \eqref{e.def.mu.de}. For every $\si,\tau \in \mcl H_N^D$ and $\alpha, \beta \in \N^K$, we have 
\begin{equation}  
\label{e.hidden.bipartite}
\E \Ll[ H_N^\mu(\si,\al) H_N^\mu(\tau,\beta) \Rr] = 2    \si \cdot q_{\al \wedge \beta}\tau = 2 q_{\al \wedge \beta} \cdot \si \tau^*. 
\end{equation}
This is only one example of a natural energy function whose correlation features the quantity $q_{\al \wedge \be}$. In order to match \eqref{e.def.cov}, we would ideally want to represent this quantity as the matrix of scalar products of some variables in a Hilbert space. That is, for some Hilbert space $\hat{\mcl H}$, we would like to identify a mapping from $\N^K$ to ${\hat{\mcl H}}^D$, which we may denote by $\al \mapsto \hat \al$, such that $\hat \al \hat \beta^* = q_{\al \wedge \be}$, with $\hat \al \hat \beta^*$ as in \eqref{e.def.matrix}. Since we have been working with finite-dimensional Hilbert spaces so far, we will only realize such an identification for arbitrarily large but finite approximations of the set $\N^K$. We fix an integer $n \ge 1$, which we think of as being large, and let
\begin{equation*}  
\mcl A_n := \{0,\ldots, n\}^0 \cup \cdots \cup \{0,\ldots, n\}^K,
\end{equation*}
with the understanding that $\{0,\ldots, n\}^0 = \{\emptyset\}$. The set $\mcl A_n$ should be thought of as an approximation of the infinitary tree of depth $K$, denoted $\mcl A$, that was introduced in \eqref{e.def.mclA}. We denote by $\mcl L_n = \{0,\ldots, n\}^K$ the set of leaves of $\mcl A_n$, and also use the notation $\al_{|k}$ introduced in \eqref{e.def.truncate} for elements $\al \in \mcl L_n$. Let $(f_\al)_{\al \in \mcl A_n}$ be an orthonormal basis of $\R^{|\mcl A_n|}$, and for each $\al \in \mcl L_n$, let $\hat \al$ be the element of the tensor product $\R^{D\times D} \otimes \R^{|\mcl A_n|}$ defined by
\begin{equation}  
\label{e.def.hat.alpha}
\hat \al := \sum_{k = 0}^K (q_k - q_{k-1})^\frac 1 2 \otimes f_{\al_{|k}} . 
\end{equation}
In order to match the setting of \eqref{e.def.cov}, we can view $\R^{D\times D} \otimes \R^{|\mcl A_n|}$ as a $D$-fold Cartesian product:
\begin{equation*}  
\R^{D\times D} \otimes \R^{|\mcl A_n|} \simeq \R^D \otimes \R^D \otimes \R^{|\mcl A_n|} \simeq (\R^D \otimes \R^{|\mcl A_n|})^D.
\end{equation*}
Explicitly, writing $(e_{d})_{1 \le d \le D}$ for the canonical basis of $\R^D$, we realize the identification above through the mapping
\begin{equation*}  
\sum_{\al \in \mcl A_n} \sum_{d,d' = 1}^D a^{\al}_{dd'} \, e_d \otimes e_{d'} \otimes f_\al  \mapsto  \Ll( \sum_{\al \in \A_n} \sum_{d' = 1}^D a^\al_{1d'} e_{d'} \otimes f_\al, \ldots, \sum_{\al \in \A_n} \sum_{d' = 1}^D a^\al_{Dd'} e_{d'} \otimes f_\al \Rr) .
\end{equation*}
Let $a, b \in \R^{D \times D} \otimes \R^{|\mcl A_n|}$ have the decompositions
\begin{equation*}  
a = \sum_{\al \in \mcl A_n} \sum_{d,d' = 1}^D a^{\al}_{dd'} \, e_d \otimes e_{d'} \otimes f_\al \quad \text{ and } \quad b = \sum_{\al \in \mcl A_n} \sum_{d,d' = 1}^D b^{\al}_{dd'} \, e_d \otimes e_{d'} \otimes f_\al.
\end{equation*}
Using the identification above to rewrite $a, b$ as $(a_d)_{1 \le d \le D}$, $(b_d)_{1 \le d \le D} \in  (\R^D \otimes \R^{|\mcl A_n|})^D$, we have, for every $d, d' \in \{1,\ldots, D\}$,
\begin{equation*}  
a_d \cdot b_{d'} = \sum_{\al \in \mcl A_n} \sum_{d'' = 1}^D a^\al_{dd''} b^\al_{d'd''}.
\end{equation*}
If moreover $a,b \in S^D \otimes \R^{|\mcl A_n|}$, then, using the notation in \eqref{e.def.matrix},
\begin{equation*}  
a b^* =  \sum_{\al \in \mcl A_n} a^\al b^\al.
\end{equation*}
In particular, in view of \eqref{e.def.hat.alpha}, we have for every $\al,\beta \in \mcl L_n$ that
\begin{equation*}  
\hat{\al} \hat\beta^* = q_{\al \wedge \beta},
\end{equation*}
as desired.
In particular, the right side of \eqref{e.hidden.bipartite} can now be seen as having the same bipartite structure as that investigated in \cite{bipartite}. 

Let us write $\hat{\mcl H}_n := \R^{D} \otimes \R^{|\mcl A_n|}$. In expressions such as \eqref{e.def.cov}, we understood that the energy function was defined over the entire Hilbert space, now $\mcl H_N^D \times \hat{\mcl H}_n^D$, while so far we have only made sense of the energy function over a subset of this Hilbert space.  The framework could be modified to require the energy function to be defined only on the support of the measure of interest; however, it is simpler to indeed extend the energy function $(\si,\al) \mapsto H_N^\mu(\si,\al)$: we give ourselves a standard Gaussian vector $J$ over the tensor product $\mcl H_N \otimes \hat{\mcl H}_n$, and  set, for every $\si \in \mcl H_N^D$ and $a \in \hat{\mcl H}_n^D$, 
\begin{equation*}  
H_N'(\si,a) :=  J \cdot \sum_{d = 1}^D \, \si_d \otimes a_{d}.
\end{equation*}
We then have, for every $\si,\tau \in \mcl H_N^D$ and $a,b \in \hat{\mcl H}_n^D$,
\begin{equation*}  
\E \Ll[ H_N'(\si,a) H_N'(\tau,b) \Rr]  = \sum_{d,d' = 1}^D (a_d \cdot b_{d'}) (\si_d \cdot \tau_{d'})
 = (ab^*)\cdot (\sigma \tau^*).
\end{equation*}
We have thus defined a random energy function $H_N'$ over the entire space $\mcl H_N^D \times {\hat{\mcl H}}_n^D$, and for every $\al, \beta \in \mcl L_n$, we have
\begin{equation*}  
\E \Ll[ H_N'(\si,\hat \al) H_N'(\tau,\hat \be) \Rr] = q_{\al\wedge \beta} \cdot (\si \tau^*).
\end{equation*}
The difference between this model and the one investigated in \cite{bipartite} is contained in the choice of the underlying reference measure for the variables taking values in $\hat{\mcl H}_n^D$. We would want this measure to be a truncated version of 
\begin{equation*}  
\sum_{\al \in \N^k} v_\al \de_{\hat \al}, 
\end{equation*}
for instance
\begin{equation*}  
Q_n := \sum_{\al \in \N^k} \Ll(\1_{\{\al \in \mcl L_n\}} v_\al \de_{\hat \al} + \1_{\{\al \notin \mcl L_n\}} v_\al \de_{0}\Rr) \in \mcl P(\hat{\mcl H}_n^D).
\end{equation*}
We can then let $n$ diverge to infinity with $N$ to get an asymptotically exact description of the model of interest. For instance, we have indeed that the Gibbs measure over $\mcl H_N^D \times \hat{\mcl H}_n^D$ proportional to
\begin{equation*}  
\1_{\{ a \neq 0 \}} \exp \Ll( H_N(\si) + H_N'(\si,a) \Rr) \, \d P_N(\si) \, \d Q_n(a)
\end{equation*}
is the image of the measure over $\mcl H_N^D \times \mcl L_n$ proportional to
\begin{equation}  
\label{e.def.illustr}
\sum_{\al \in \mcl L_n} \exp \Ll(  H_N(\si) + H_N^\mu(\si,\alpha) \Rr)  \, \d P_N(\si) \, v_\al  \de_\al
\end{equation}
under the mapping
\begin{equation*}  
\Ll\{
\begin{array}{rcl}  
\mcl H_N^D \times \mcl L_n  & \to & \mcl H_N^D \times \hat{\mcl H}_n^D \\
(\si,\al) & \mapsto & (\si,\hat \al).
\end{array}
\Rr.
\end{equation*}
The measure in \eqref{e.def.illustr} was only chosen for illustration; the same consideration applies to the measure appearing in \eqref{e.def.Gibbs} for instance (in fact, this measure is of the form of \eqref{e.def.illustr} after a change of the measure $P_N$). 

\subsection{Closure properties of covariance functions}
In this subsection, we first discuss the closure properties of the space of functions $\xi$ that satisfy \eqref{e.def.cov} for some random energy function $H_N$. It is straightforward to verify that the space of functions $\xi$ that satisfy~\eqref{e.def.cov} for some $H_N$ is a convex cone: that is, if $\xi_1$ and~$\xi_2$ are two functions in this space, and if $\al, \be \ge 0$, then the function $\al \xi_1 + \be \xi_2$ also belongs to this space. This space is also closed under multiplication, as shown in the following proposition.
\begin{proposition}[Closure under multiplication]
\label{p.closure}
Let $(H_1(\sigma))_{\sigma \in \mcl H}$ and $(H_2(\sigma))_{\sigma \in \mcl H}$ be two centered Gaussian fields defined over the same Hilbert space $\mcl H$. There exists a centered Gaussian field $(H(\si))_{\si \in \mcl H}$ whose covariance is the product of the covariances of~$H_1$ and~$H_2$: for every $\si,\tau  \in \mcl H$,
\begin{equation}  
\label{e.cov.prod}
\E \Ll[ H(\si) H(\tau)\Rr] = \E \Ll[ H_1(\si) H_1(\tau)\Rr]  \, \E \Ll[ H_2(\si) H_2(\tau)\Rr] .
\end{equation}
\end{proposition}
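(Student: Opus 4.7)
The proof reduces to two ingredients: the pointwise product of covariance kernels is again a covariance kernel, and then one applies Kolmogorov's extension theorem. More precisely, for any finite collection $\sigma_1, \ldots, \sigma_n \in \mcl H$, consider the matrix $M \in \R^{n \times n}$ with entries
\begin{equation*}
M_{ij} := \E[H_1(\sigma_i) H_1(\sigma_j)] \, \E[H_2(\sigma_i) H_2(\sigma_j)] .
\end{equation*}
This matrix is the Schur (entrywise) product of the covariance matrices of the Gaussian vectors $(H_1(\sigma_i))_{1 \le i \le n}$ and $(H_2(\sigma_i))_{1 \le i \le n}$, both of which are positive semidefinite. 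By the Schur product theorem (see e.g.\ \cite[Theorem~7.21]{matrix}, which is invoked elsewhere in this section), $M$ is positive semidefinite, so a centered Gaussian field with covariance given by \eqref{e.cov.prod} exists by Kolmogorov's extension theorem.

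An alternative, more explicit construction fits the spirit of the present section. Without loss of generality we may take $H_1$ and $H_2$ to be independent, since replacing one of them by an independent copy does not affect the two covariances on the right side of \eqref{e.cov.prod}. Write the Karhunen--Lo\`eve-type expansions
\begin{equation*}
H_1(\si) = \sum_{i} g_i \, f_i(\si), \qquad H_2(\si) = \sum_j g'_j \, h_j(\si) ,
\end{equation*}
where $(g_i)$ and $(g'_j)$ are independent families of standard Gaussians and $f_i, h_j$ are deterministic; the existence of such representations follows from the standard spectral theory of centered Gaussian fields. Let $(g_{ij})_{i,j}$ be an independent family of standard Gaussians and set
\begin{equation*}
H(\si) := \sum_{i,j} g_{ij} \, f_i(\si) \, h_j(\si) .
\end{equation*}
A direct computation yields
\begin{equation*}
\E[H(\si) H(\tau)] = \sum_{i,j} f_i(\si) f_i(\tau) \, h_j(\si) h_j(\tau) = \E[H_1(\si) H_1(\tau)] \, \E[H_2(\si) H_2(\tau)] ,
\end{equation*}
as desired.

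The only mild subtlety is to justify the convergence of these series when $\mcl H$ is infinite-dimensional, which I would handle by a routine truncation argument, using that $H_1(\si), H_2(\si)$ have finite variance. The first route via Kolmogorov bypasses this issue entirely, and is arguably the cleanest option; the second is useful if one wishes to keep the field $H$ explicitly measurable with respect to the underlying Gaussian randomness. I do not anticipate any substantive obstacle in either approach.
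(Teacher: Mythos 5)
Your first argument—reducing via Kolmogorov's extension theorem to finite collections and then invoking the Schur product theorem to see that the entrywise product of the two positive semidefinite covariance matrices is positive semidefinite—is exactly the paper's proof, and it is correct. The explicit construction via products of Karhunen--Lo\`eve expansions is a reasonable optional alternative, but it is not needed and the paper does not pursue it.
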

\begin{proof}
By Kolmogorov's extension theorem, it suffices to justify, for every finite subset $\mcl S$ of $\mcl H$, the existence of a centered Gaussian field satisfying \eqref{e.cov.prod} for every $\si,\tau \in \mcl S$. This in turn amounts to the verification of the statement that the matrix
\begin{equation*}  
\Ll( \E \Ll[ H_1(\si) H_1(\tau)\Rr]  \, \E \Ll[ H_2(\si) H_2(\tau)\Rr]  \Rr) _{\si,\tau \in \mcl S}
\end{equation*}
is positive semidefinite. Since, for every $a \in \{1, 2\}$, the matrix $\Ll( \E \Ll[ H_a(\si) H_a(\tau)\Rr]\Rr) _{\si,\tau \in \mcl S}$ is positive semidefinite, the desired result follows from the Schur product theorem. 
\end{proof}
In principle, these observations (stability under positive linear combinations and multiplications) allow us to generate more examples of random fields whose covariance function can be put in the form displayed in \eqref{e.def.cov}; for instance, the existence of a random field with covariance given by \eqref{e.cov.prod} becomes clear. However, except by the obvious operation of taking positive linear combinations, we cannot generate truly new random fields by proceeding in this way. Indeed, the general form \eqref{e.def.xip}, with $p \in \N$ and $\msf C \in S^{D^p}_+$, encompasses all other examples discussed in the previous subsections; and for every $p, p' \in \N$, $\msf C \in S^{D^{p}_+}$, $\msf C' \in S^{D^{p'}}_+$, and $A \in \R^{D\times D}$, we have
\begin{equation*}  
\Ll(\msf C \cdot A^{\otimes p}\Rr) \Ll(\msf C' \cdot A^{\otimes p'}\Rr) = \Ll( \msf C \otimes \msf C' \Rr) \cdot  A^{\otimes (p+p')},
\end{equation*}
with $\msf C \otimes \msf C' \in S^{D^{p+p'}}_+$. So far the most general functions $\xi$ that we can construct are therefore of the form
\begin{equation*}  
\xi(A) = \sum_{p = 0}^{+\infty} \msf C^{(p)} \cdot A^{\otimes p},
\end{equation*}
where $\msf C^{(p)} \in S^{D^p}_+$, and the norm of $\msf C^{(p)}$ decays sufficiently fast (faster than any negative exponential of $p$ would do) as $p$ tends to infinity. The next proposition provides with some evidence that there cannot be many more examples.

\begin{proposition}[characterization of admissible functions]
\label{p.charact}
Let $D \ge 1$ be an integer, $\mcl H$ be a Hilbert space, $(H(\si))_{\si \in \mcl H^D}$ be a centered Gaussian field, and $\xi : \R^{D\times D} \to \R$ be a function such that, for every $\si,\tau \in \mcl H^D$,
\begin{equation}  
\label{e.def.cov.H}
\E \Ll[ H(\si) H(\tau) \Rr] = \xi(\si \tau^*).
\end{equation}
If $\xi$ admits an absolutely convergent power series expansion, then there exists a sequence of matrices $(\msf C^{(p)})_{p \in \N}$, with $\msf C^{(p)} \in S^{D^p}_+$ for every $p \ge 1$, such that for every $\si, \tau \in \mcl H^D$, 
\begin{equation}  
\label{e.charact}
\xi(\si \tau^*) = \sum_{p = 0}^{+\infty} \msf C^{(p)} \cdot (\si \tau^*)^{\otimes p}. 
\end{equation}
\end{proposition}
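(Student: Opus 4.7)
The plan is three steps: expand $\xi$ into homogeneous polynomial components, establish that each yields a positive semidefinite kernel on $\mcl H^D$, and convert this into positive semidefiniteness of an explicit tensor representative.

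First I would use the absolutely convergent power series of $\xi$ to write $\xi(A)=\sum_{p\geq 0}P_p(A)$, with $P_p$ a homogeneous polynomial of degree $p$ in the entries of $A\in\R^{D\times D}$. Any such $P_p$ admits (non-uniquely) a representation $P_p(A)=\msf D^{(p)}\cdot A^{\otimes p}$ with $\msf D^{(p)}\in\R^{D^p\times D^p}$, and the task is to select one representative in $S^{D^p}_+$. At this stage I also record the symmetry $P_p(A)=P_p(A^T)$, which follows from $\xi(\si\tau^*)=\xi(\tau\si^*)=\xi((\si\tau^*)^T)$, and which lets one replace any representative by its matrix-symmetrization $(\msf D^{(p)}+(\msf D^{(p)})^T)/2$ without changing the polynomial.

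Second, I would show for each $p\geq 1$ that the polynomial kernel $(\si,\tau)\mapsto P_p(\si\tau^*)$ is positive semidefinite on $\mcl H^D$. Given $\si^1,\dots,\si^n\in\mcl H^D$, scalars $c_1,\dots,c_n$, distinct positive numbers $\mu_1,\dots,\mu_{P+1}$, and $\epsilon>0$, the Gaussian PSD property applied to the $(P+1)n$ rescaled vectors $\epsilon\mu_\alpha\si^i$ with the rank-one test vector $x_{(i,\alpha)}=c_i e_\alpha$ yields
\begin{equation*}
\sum_{p\geq 0}\Bigl(\textstyle\sum_\alpha e_\alpha(\epsilon\mu_\alpha)^p\Bigr)^2\sum_{i,j}c_ic_jP_p(\si^i(\si^j)^*)\geq 0.
\end{equation*}
Fixing a target degree $q\leq P$, I would choose $e_\alpha$ by inverting a Vandermonde system so that $\sum_\alpha e_\alpha(\epsilon\mu_\alpha)^p=\delta_{p,q}$ for $0\leq p\leq P$; the main contribution then reduces to $\sum_{i,j}c_ic_jP_q(\si^i(\si^j)^*)$, and the tail $p>P$ is $O(\epsilon^{2(P+1-q)})$ thanks to the Cauchy-type bound $|P_p(A)|\leq M(|A|/R)^p$ inherited from the radius of convergence of $\xi$. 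Letting $\epsilon\to 0$ with $P>q$ isolates the $q$-th kernel as nonnegative.

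Third, I would upgrade kernel-PSD of $P_p$ to a PSD tensor representative $\msf C^{(p)}$. The identity $(\si\tau^*)^{\otimes p}=\bar\si\bar\tau^*$, with $\bar\si\in(\mcl H^{\otimes p})^{D^p}$ defined by $\bar\si_{\mathbf d}:=\si_{d_1}\otimes\cdots\otimes\si_{d_p}$, linearises the problem by transferring it to the enlarged Hilbert space $\mcl H^{\otimes p}$: the PSD condition becomes $\msf D^{(p)}\cdot ww^*\geq 0$ for every $w$ in the span of $\{\bar\si:\si\in\mcl H^D\}$, which is the $S_p$-symmetric subspace of $\mcl H^{\otimes p}\otimes\R^{D^p}$. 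Taking $w=u\otimes h$ with $u\in\R^{D^p}$ and $h\in\mcl H^{\otimes p}$ chosen from matched isotypic components of the diagonal $S_p$-action (so that $u\otimes h$ is indeed $S_p$-invariant), one extracts $|h|^2\,u^T\msf C^{(p)}u\geq 0$; ranging $u$ over every irreducible subspace of $\R^{D^p}$ then yields $\msf C^{(p)}\in S^{D^p}_+$ after the matrix-symmetrization from step one. The main obstacle lies in this last step, which requires enough representation theory of $S_p$ to probe every isotypic component of $\R^{D^p}$; for this it is convenient to assume $\dim\mcl H\geq p$, with a preliminary argument (using the power series structure) to extend the Gaussian field from a potentially smaller $\mcl H$ to a larger one if needed.
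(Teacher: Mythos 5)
Your steps 1 and 2 (splitting $\xi$ into homogeneous parts and isolating each part's kernel positivity by the rescaling/Vandermonde argument) are sound, but step 3, which is where the positive semidefiniteness of $\msf C^{(p)}$ must actually be produced, has a genuine gap. A pure tensor $u \otimes h$ is invariant under the diagonal $S_p$-action only if $\pi u = \chi(\pi) u$ and $\pi h = \chi(\pi) h$ for a one-dimensional character $\chi$ of $S_p$, i.e.\ only when $u$ and $h$ are both symmetric or both antisymmetric. For $p \ge 3$, the isotypic components of $(\R^D)^{\otimes p}$ attached to higher-dimensional irreducible representations contain no vector $u$ for which any $u\otimes h$ lies in $\mathrm{Sym}^p(\R^D\otimes \mcl H)$, so the device of invariant pure tensors cannot ``probe every isotypic component'' as you intend. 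Moreover, even if you could test each isotypic component separately, positivity of $u^\T \msf C^{(p)} u$ component by component would not give $\msf C^{(p)} \in S^{D^p}_+$: a general $u$ mixes components, and your matrix-symmetrization does not make $\msf C^{(p)}$ commute with the permutation operators, so the cross terms are uncontrolled. The step can be repaired along your own lines: assuming $\dim \mcl H \ge p$, take $h = h_1 \otimes \cdots \otimes h_p$ with $(h_i)$ orthonormal and test with the symmetrized vector $w = \frac{1}{p!}\sum_{\pi \in S_p} (P_\pi u) \otimes (\pi h)$, which does lie in $\mathrm{Sym}^p(\R^D\otimes\mcl H)$, i.e.\ in the span of the vectors $\bar\si$; since $\langle \pi h, \pi' h\rangle = \delta_{\pi\pi'}$, the quadratic form collapses to a positive multiple of $u^\T \big( \frac{1}{p!}\sum_{\pi} P_\pi^{-1} \msf D^{(p)}_{\mathrm{sym}} P_\pi \big) u \ge 0$ for \emph{every} $u \in \R^{D^p}$, and this $S_p$-average represents the same polynomial because $A^{\otimes p}$ commutes with every $P_\pi$.

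A second, smaller gap is the proposed ``preliminary argument to extend the Gaussian field to a larger $\mcl H$'' when $\dim\mcl H < p$: this cannot be invoked as a lemma, because the existence of a positive semidefinite representative $\msf C^{(p)}$ is exactly what makes the kernel extendable to larger Hilbert spaces, so extendability is essentially equivalent to the conclusion you are trying to prove (already for $D=1$, $\dim\mcl H = 1$, it amounts to nonnegativity of all Taylor coefficients). Note that the paper's proof sidesteps both difficulties by working with the field rather than its covariance: it takes an analytic modification of $\si \mapsto H(\si)$, expands $H$ as a random power series, shows by differentiating \eqref{e.def.cov.H} that the Gaussian Taylor coefficients of different degrees and different spatial indices are uncorrelated, and defines $\msf C^{(p)}$ as the covariance matrix of the degree-$p$ coefficients, so positivity is automatic and no lower bound on $\dim\mcl H$ is needed.
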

\begin{remark}  
\label{r.charact}
If the Hilbert space $\mcl H$ has dimension less than $D$, then the statement of~\eqref{e.charact} does not fully determine the function $\xi$. However, since we only ever want to refer to functions $\xi$ as they appear in \eqref{e.def.cov.H}, this is irrelevant, and we may modify $\xi$ outside of the set $\{\si \tau^*, \ \si,\tau \in \mcl H^D\}$ so that \eqref{e.charact} holds with $\si \tau^*$ replaced by any matrix $A \in \R^{D\times D}$. Once this is done, the possibly modified function $\xi$ is proper, by Proposition~\ref{p.xi}. 
\end{remark}
\begin{proof}[Proof of Proposition~\ref{p.charact}]
Without loss of generality, we may assume that the space $\mcl H$ is finite-dimensional. The statement is obvious if $\mcl H = \{0\}$. Otherwise, we may identify $\mcl H$ with $\R^I$ for some integer $I \ge 1$, and index every element $\si$ of $\mcl H^D$ as $\si =  (\si_d)_{1 \le d \le D} = (\si_{d,i})_{1 \le d \le D, 1 \le i \le I}$. 
Differentiating the relation \eqref{e.def.cov.H}, and using that $\xi$ has a power expansion and Kolmogorov's continuity theorem,  we can choose of modification of the mapping $\si \mapsto H(\si)$ that is $C^\infty$; and in fact, we then have that the mapping $\si \mapsto H_p(\si)$ can be written as a power series. More precisely, for every $p \in \N$ and $d_1,\ldots, d_p \in \{1,\ldots, D\}$, letting
\begin{equation*}  
J^{(d_1,\ldots, d_p)}_{i_1,\ldots, i_p} := \frac{1}{p!}\, \dr^p_{\si_{d_1,i_1}\cdots \, \si_{d_p,i_p}} H_p(0),
\end{equation*}
we have, for every $\si \in \R^D$,
\begin{equation*}  
H_N(\si) = \sum_{p = 0}^\infty \, \sum_{d_1,\ldots, d_p = 1}^D \sum_{i_1, \ldots, i_p = 1}^I J^{(d_1,\ldots, d_p)}_{i_1,\ldots, i_p} \, \si_{d_1,i_1} \cdots \, \si_{d_p,i_p}.
\end{equation*}
Notice that, for every $p \in \N$, $d_1,\ldots, d_p \in \{1,\ldots, D\}$, and $i_1,\ldots, i_p \in \{1,\ldots, I\}$,  we have uniformly over $|\si| \le 1$ that as $\tau$ tends to $0$,
\begin{equation*}  
\dr^p_{\si_{d_1, i_1} \cdots \, \si_{d_p,i_p}}\xi(\si \tau^*) = O(|\tau|^p),
\end{equation*}
and as a consequence, for every $p , q \in \N$ with $q < p$, $d_1,\ldots, d_p, d_1',\ldots, d_q' \in \{1,\ldots, D\}$, and $i_1,\ldots, i_p, i'_1,\ldots, i'_{q} \in \{1,\ldots, I\}$, we have 
\begin{equation*}  
\dr^p_{\si_{d_1,i_1} \cdots \, \si_{d_p,i_p}} \dr^q_{\tau_{d'_1,i'_1} \cdots \, \tau_{d'_q,i'_q}} \xi(0) = 0.
\end{equation*}
(We implicitly understand that it is the function $(\si, \tau) \mapsto \xi(\si \tau^*)$ that is being differentiated.) By differentiation of \eqref{e.def.cov.H}, we thus obtain that for every $p , q \in \N$ with $q < p$ and $d_1,\ldots, d_p, d_1',\ldots, d_q' \in \{1,\ldots, D\}$, the vectors $J^{(d_1,\ldots, d_p)}$ and $J^{(d'_1,\ldots, d'_q)}$ are uncorrelated, and therefore independent, since the family of all the $J$ variables is jointly Gaussian. A similar reasoning also yields the independence of the components of the vector $J^{(d_1,\ldots, d_p)}$. Denoting by $\msf C^{(p)} \in S^{D^p}_+$ the covariance matrix of the vector $(J^{(d_1,\ldots, d_p)})_{d_1,\ldots, d_p \in \{1,\ldots, D\}}$ , we thus obtain \eqref{e.charact}.
\end{proof}

\subsection{Construction of a regularization}
\label{ss.regularization}
In this subsection, we briefly explain how to build a regularization of a proper function.
\begin{proposition}[Construction of a regularization]
\label{p.construct.regul}
Every locally Lipschitz and proper function $\xi : S^D_+ \to \R$ admits a regularization.
\end{proposition}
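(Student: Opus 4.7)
The set $K := \{a \in S^D_+ : |a_{ij}| \le 1 \text{ for all } i,j\}$ is compact in $S^D$ (the off-diagonal bound is automatic from positive semidefiniteness once the diagonal entries are bounded by $1$, via Cauchy--Schwarz), so the local Lipschitz assumption on $\xi$ furnishes a constant $L<\infty$ with $\xi|_K$ being $L$-Lipschitz. My plan is to extend $\xi$ from $K$ to all of $S^D_+$ by appending a linear-growth tail with controlled slopes, chosen so that the extension inherits properness.

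For the smooth case (obtained by convolving $\xi$ with a smooth nonnegative mollifier on a slight enlargement of $S^D_+$; nonnegative convolution preserves both the monotonicity and the increasing-differences property of $\xi$), I would define
\[
\bar \xi(a) \;:=\; \sup_{b \in K}\bigl\{\xi(b) + \nabla\xi(b)\cdot(a-b)\bigr\}, \qquad a \in S^D_+ .
\]
Properness of $\xi$ ensures that $\nabla\xi(b) \in S^D_+$ and $|\nabla\xi(b)| \le L$ for all $b \in K$, so $\bar \xi$ is automatically $L$-Lipschitz, increasing on $S^D_+$, and convex as a supremum of affine functions with PSD slopes. Verifying $\bar\xi = \xi$ on $K$ reduces to the inequality $\xi(a)-\xi(b) \ge \nabla\xi(b)\cdot(a-b)$ for $a,b \in K$. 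Since $a-b$ is generally not in $S^D_+$, this is not a direct consequence of properness; I would prove it by writing $a = (a - b)_+ + (a\wedge b)$ and $b = (b-a)_+ + (a\wedge b)$ for suitable cone-compatible common lower bounds, then applying the cone-wise monotonicity of $\nabla\xi$ along each directed segment.

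The main obstacle, and where the real work lies, is verifying the second half of properness for $\bar\xi$: that $a \mapsto \bar\xi(a+c)-\bar\xi(a)$ is increasing on $S^D_+$ for every $c \in S^D_+$. Mere convexity of $\bar\xi$ gives monotonicity of differences only in the direction of $c$, whereas what is required is monotonicity in every $S^D_+$-direction, a genuinely stronger statement tied to the geometry of the cone. The argument I would push through exploits the fact that the family of slopes $\{\nabla\xi(b) : b \in K\}$ is itself $S^D_+$-monotone in $b$: an envelope-theorem computation then shows that the maximizer $b(a)$ in the definition of $\bar\xi(a)$ moves in the cone direction whenever $a$ does, so the supporting slope $\nabla\xi(b(a))$ can only grow in $S^D_+$, delivering the required monotonicity of increments. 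Once the smooth case is settled, the general locally Lipschitz case follows by taking smooth mollifications $\xi_\varepsilon$, building $\bar\xi_\varepsilon$ as above, and passing to the limit using the uniform $L$-Lipschitz bound.
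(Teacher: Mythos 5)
Your construction has a fatal gap at the step ``verifying $\bar\xi=\xi$ on $K$''. Defining $\bar\xi(a):=\sup_{b\in K}\{\xi(b)+\nabla\xi(b)\cdot(a-b)\}$ always gives $\bar\xi\ge\xi$ on $K$ (take $b=a$), and equality on $K$ holds precisely when $\xi(a)\ge\xi(b)+\nabla\xi(b)\cdot(a-b)$ for all $a,b\in K$, i.e.\ when $\xi$ is convex there. But properness does not imply convexity: it only yields convexity of $t\mapsto\xi(a+tc)$ along cone directions $c\in S^D_+$, whereas $a-b$ for $a,b\in K$ is a general symmetric matrix. The paper's Proposition~\ref{p.xi.vector}(2) gives explicit proper functions, e.g.\ $\xi(A)=\msf C\cdot A^{\odot p}$ with $p$ odd and $\msf C_{1,2}\neq 0$, that are \emph{not} convex on $S^D_+$ (with the non-convexity witnessed at matrices of entries bounded by $1$, hence inside $K$); for these your $\bar\xi$ is strictly larger than $\xi$ somewhere on $K$, so property (1) of a regularization fails. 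Handling such non-convex $\xi$ is the entire point of the paper, so this cannot be repaired within a supremum-of-tangent-planes (hence convex) extension. The proposed rescue via the decomposition $a=(a-b)_++(a\wedge b)$ also does not make sense as stated: for $D\ge 2$ the cone $S^D_+$ is not a lattice, so $a\wedge b$ and the positive/negative parts relative to the cone order are not defined in general. Finally, the part you flag as ``the real work'' --- increasing cone-differences for $\bar\xi$ via an envelope argument that the maximizer $b(a)$ moves in the cone direction --- is only sketched and is not obviously true, but it is moot given the failure of the coincidence property.

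For comparison, the paper sidesteps all of this with a much cruder patch: with $L$ a Lipschitz bound for $\xi$ on the trace ball $\mcl B(2D)=\{a\in S^D_+:\tr(a)\le 2D\}$, it sets $\bar\xi(a):=\max\bigl(\xi(a),\,\xi(0)+2L(\tr(a)-D)\bigr)$ for $\tr(a)\le 2D$ and $\bar\xi(a):=\xi(0)+2L(\tr(a)-D)$ beyond. The steep linear-in-trace function lies below $\xi$ on $\mcl B(D)$ (which contains all PSD matrices with entries in $[-1,1]$), so $\bar\xi=\xi$ there, and it dominates $\xi$ by the time $\tr(a)=2D$, so the two pieces glue; the gradient is a.e.\ either $\nabla\xi(a)$ or $2L\,\mathrm{Id}$, and one checks directly that it is $S^D_+$-valued and increasing, so $\bar\xi$ is proper. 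No convexity is needed or produced. If you want to salvage your approach, you would have to abandon the convex envelope and instead glue $\xi$ to an affine function of the trace in the spirit of the paper.
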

\begin{proof}
For every $r > 0$, we write
\begin{equation*}  
\mcl B(r) := \Ll\{ a \in S^D_+ \ : \ \tr(a) \le r \Rr\} .
\end{equation*}
Notice that all positive semidefinite matrices with entries in $[-1,1]$ belong to $\mcl B(D)$. 
For every $a \in S^D_+$, we denote by $|a|_{\infty}$ the largest eigenvalue of $a$. We let
\begin{equation*}  
L := \| \, |\nabla \xi|_\infty \, \|_{L^\infty(\mcl B(2D))},
\end{equation*}
and for every $a \in S^D_+$,
\begin{equation*}  
\bar \xi(a) := 
\Ll|
\begin{array}{ll}  
\max \Ll( \xi(a), \xi(0) + 2L (\tr(a) - D) \Rr) & \text{ if } \tr(a) \le 2D, \\
\xi(0) + 2L(\tr(a) - D) & \text{ if } \tr(a) > 2D.
\end{array}
\Rr.
\end{equation*}
For every $a \in \mcl B(2D)$, we have
\begin{equation*}  
\xi(0) \le \xi(a) \le \xi(0) +  L \tr(a),
\end{equation*}
and thus $\bar \xi$ and $\xi$ coincide on $\mcl B(D)$, and $\bar \xi$ is a uniformly Lipschitz function. Its gradient takes values in $S^D_+$ almost everywhere, and is increasing. This shows that $\bar \xi$ is proper.
\end{proof}

\medskip

\noindent \textbf{Acknowledgements.} I was partially supported by the NSF grant DMS-1954357.

\small
\bibliographystyle{abbrv}
\bibliography{genbound}

\end{document}